\numberwithin{equation}{section}
\theoremstyle{definition}\newtheorem{definition}{Definition}[section]
\newtheorem{remark}[definition]{Remark}
\newtheorem{example}[definition]{Example}
\newtheorem{condition}[definition]{Condition}}
\newtheorem{proposition}[definition]{Proposition}
\newtheorem{lemma}[definition]{Lemma}
\newtheorem{theorem}[definition]{Theorem}
\newtheorem{corollary}[definition]{Corollary}
\newtheorem{step}{Step}[section]
\newenvironment{itemlist}{\begin{list}{$\bullet$}{\setlength{\labelwidth}{2ex}\setlength{\leftmargin}{5ex}\setlength{\labelsep}{1.5ex}\setlength{\itemindent}{0ex}\setlength{\rightmargin}{0ex}\setlength{\topsep}{0.5ex}\setlength{\parsep}{0.8ex}\setlength{\itemsep}{0.2ex}\setlength{\listparindent}{0ex}}}{\end{list}}
\newcounter{mycount}
\newenvironment{enumlist}{\begin{list}{\arabic{mycount}.}{\usecounter{mycount}\setlength{\labelwidth}{2ex}\setlength{\leftmargin}{5ex}\setlength{\labelsep}{1.5ex}\setlength{\itemindent}{0ex}\setlength{\rightmargin}{0ex}\setlength{\topsep}{0.5ex}\setlength{\parsep}{0.8ex}\setlength{\itemsep}{0.2ex}\setlength{\listparindent}{0ex}}}{\end{list}}
\newcommand{\M}{\operatorname{M}}
\newcommand{\C}{\mathbb{C}}
\newcommand{\embed}{\prec}
\newcommand{\F}{\mathbb{F}}
\newcommand{\actson}{\curvearrowright}
\newcommand{\SL}{\operatorname{SL}}
\newcommand{\rL}{\mathord{\text{\rm L}}}
\newcommand{\Aut}{\operatorname{Aut}}
\newcommand{\N}{\mathbb{N}}
\newcommand{\T}{\mathbb{T}}
\newcommand{\Z}{\mathbb{Z}}
\newcommand{\cF}{\mathcal{F}}
\newcommand{\cA}{\mathcal{A}}
\newcommand{\cV}{\mathcal{V}}
\newcommand{\id}{\mathord{\operatorname{id}}}
\newcommand{\si}{\sigma}
\newcommand{\recht}{\rightarrow}
\newcommand{\cU}{\mathcal{U}}
\newcommand{\vphi}{\varphi}
\newcommand{\R}{\mathbb{R}}
\newcommand{\al}{\alpha}
\newcommand{\eps}{\varepsilon}
\newcommand{\Tr}{\operatorname{Tr}}
\newcommand{\ovt}{\mathbin{\overline{\otimes}}}
\newcommand{\B}{\operatorname{B}}
\newcommand{\om}{\omega}
\newcommand{\cP}{\mathcal{P}}
\newcommand{\cZ}{\mathcal{Z}}
\newcommand{\cK}{\mathcal{K}}
\newcommand{\cH}{\mathcal{H}}
\newcommand{\ot}{\otimes}
\newcommand{\dis}{\displaystyle}
\newcommand{\Ad}{\operatorname{Ad}}
\newcommand{\cG}{\mathcal{G}}
\newcommand{\cM}{\mathcal{M}}
\newcommand{\dpr}{^{\prime\prime}}
\newcommand{\be}{\beta}
\newcommand{\lspan}{\operatorname{span}}
\newcommand{\Cal}{\mathcal}
\newcommand{\Mtil}{\widetilde{M}}
\newcommand{\D}{\operatorname{D}}
\newcommand{\Stab}{\operatorname{Stab}}
\newcommand{\PSL}{\operatorname{PSL}}
\newcommand{\Cstarred}{\text{\rm C}^*_{\text{\rm r}}}
\newcommand{\Sp}{\operatorname{Sp}}
\newcommand{\Dtil}{\widetilde{D}}
\newcommand{\ptil}{\widetilde{p}}
\newcommand{\Ptil}{\widetilde{P}}
\newcommand{\bim}[3]{\mathord{\raisebox{-0.4ex}[0ex][0ex]{\scriptsize $#1$}{#2}\hspace{-0.2ex}\raisebox{-0.4ex}[0ex][0ex]{\scriptsize $#3$}}}
\newcommand{\Om}{\Omega}
\newcommand{\de}{\Delta}
\newcommand{\Centr}{\operatorname{Centr}}
\newcommand{\sgeq}{{\scriptscriptstyle\geqslant}}
\renewcommand{\geq}{\geqslant}
\renewcommand{\leq}{\leqslant}
\newcommand{\action}[1]{\overset{#1}{\curvearrowright}}
\newcommand{\cMtil}{\widetilde{\mathcal{M}}}
\newcommand{\Norm}{\operatorname{Norm}}
\newcommand{\QN}{\operatorname{QN}}
\newcommand{\Image}{\operatorname{Im}}
\newcommand{\weakcont}{\subset_{\text{\rm\tiny weak}}}
\newcommand{\cB}{\mathcal{B}}
\newcommand{\Fix}{\operatorname{Fix}}
\newcommand{\cI}{\mathcal{I}}
\newcommand{\otalg}{\otimes_{\text{\rm alg}}}
\newcommand{\op}{^\text{\rm op}}
\newcommand{\dist}{\operatorname{dist}_{\|\cdot\|_2}}
\newcommand{\qtil}{\tilde{q}}
\begin{document}

\title[A class of superrigid group von Neumann algebras]{A class of superrigid group von Neumann algebras}

\author[Adrian Ioana]{Adrian Ioana$^{(1)}$}
\thanks{\mbox{}$^{(1)}$ Supported by a Clay Research Fellowship}
\address{Mathematics Department; University of California at Los Angeles, CA 90095-1555 (United States).}
\email{aioana@ucsd.edu}

\author[Sorin Popa]{Sorin Popa$^{(2)}$}
\thanks{\mbox{}$^{(2)}$ Partially supported by NSF Grant DMS-0601082}
\address{Mathematics Department; University of California at Los Angeles, CA 90095-1555 (United States).}
\email{popa@math.ucla.edu}

\author[Stefaan Vaes]{Stefaan Vaes$^{(3)}$}
\thanks{\mbox{}$^{(3)}$ Partially
    supported by ERC Starting Grant VNALG-200749, Research
    Programme G.0639.11 of the Research Foundation --
    Flanders (FWO) and K.U.Leuven BOF research grant OT/08/032.}
\address{Department of Mathematics;
    K.U.Leuven; Celestijnenlaan 200B; B--3001 Leuven (Belgium).}
\email{stefaan.vaes@wis.kuleuven.be}

\subjclass[2010]{Primary: 46L36; Secondary: 20E22}

\keywords{Group von Neumann algebra, W$^*$-superrigidity, deformation/rigidity theory, wreath product group, II$_1$ factor}


\begin{abstract}
We prove that for any group  $G$ in a fairly large class of
generalized wreath product groups, the associated von Neumann
algebra $\rL G$ completely ``remembers'' the group $G$. More
precisely, if $\rL G$ is isomorphic to the von Neumann algebra $\rL
\Lambda$ of an arbitrary countable group $\Lambda$, then $\Lambda$
must be isomorphic to $G$. This represents the first superrigidity
result pertaining to group von Neumann algebras.
\end{abstract}

\maketitle

\section{Introduction and statement of main results}

A countable discrete group $G$ gives rise to a variety of rings and
algebras, studied in several areas of mathematics, such as algebra,
finite group theory, geometric group theory, representation theory,
non-commutative geometry, C$^*$- and von Neumann operator algebras.
A common underlying theme is the investigation of how the
isomorphism class of the ring/algebra depends on the group $G$.

Thus, by letting the (complex) group algebra $\C G$ act on the
Hilbert space $\ell^2 G$ by (left) convolution and then taking
its closure in the operator norm, one obtains the {\it reduced
group} C$^*$-{\it algebra} $\Cstarred G$, an important object of
study in non-commutative geometry (e.g., related to the Novikov
conjecture, see \cite{Co94}). In turn, by taking the closure of $\C
G$ in the weak operator topology one obtains the {\it group von
Neumann algebra} $\rL G$, introduced and studied by Murray and von
Neumann in \cite{MvN36, MvN43}.

When passing from $\C G$ to $\rL G$, the memory of $G$ tends to fade
away. This is best seen in the torsion free abelian case, where $\C
G$ remembers $G$ completely (see e.g.\ \cite{Hi39}), while all $\rL
G$ are isomorphic (because $\rL G = \rL^\infty(\hat{G}) \cong
\rL^\infty([0,1])$). Conjecturally, if $G$ is an arbitrary torsion
free group, then the only unitary elements in $\C G$ are the
multiples of the canonical unitaries $(u_g)_{g \in G}$ (see
\cite{Hi39,Ka70}, where in fact the conjecture was checked for all
orderable groups). On the other hand, the weak closure $\overline{\C
G}^w=\rL G$ entirely wipes out this structure. The intermediate case $\Cstarred G$ appears to
be closer to $\C G$ than to $\rL G$. Indeed, if $G$ is abelian
torsion free, then the group of connected components of
$\cU(\Cstarred G)$ coincides with $G$ so that $\Cstarred G$
completely remembers $G$ (this is obvious when $G = \Z^n$ and passes
to inductive limits $\Z^{n_1} \hookrightarrow \Z^{n_2}
\hookrightarrow \cdots$). The non-commutative case is very poorly understood. It seems not even known whether $\Cstarred G$ always remembers a torsion free group $G$.
This question is particularly interesting for free groups, $G=\mathbb F_n$, where a
result in \cite{PV81} already shows that $\Cstarred \mathbb F_n$ are
non-isomorphic for different $n$'s. In fact, when combined with
results in \cite{DHR97,Ri87}, if follows that the group of
connected components of $\cU(\Cstarred \mathbb F_n)$ is isomorphic
to $\mathbb Z^n$.

For von Neumann algebras, the really interesting case is when $\rL
G$ has trivial center, i.e.\ when $\rL G$ is a II$_1$ factor,
corresponding to $G$ having infinite conjugacy classes (icc), see
\cite{MvN43}. Here again, like in the abelian case, a celebrated
result of Connes \cite{Co75} shows that all II$_1$ factors coming
from icc amenable groups are isomorphic to the {\it hyperfinite}
II$_1$ factor of Murray and von Neumann. While non-amenable groups
$G$ were known since \cite{MvN43,Sc63} to produce non-hyperfinite
factors $\rL G$ and an uncountable family of icc groups with the
associated II$_1$ factors non-isomorphic was constructed in
\cite{McD69}, very little is known of how $\rL G$ depends on the
group $G$, especially when $G$ is a ``classical'' group like
$\SL(n,\Z)$, or a free group $\F_n$. For instance, it is a famous
open problem whether the factors $\rL \F_n$, $n \geq 2$, are
non-isomorphic. In the same vein, a well known conjecture of Connes
\cite{Co80b} asks whether $\rL G \cong \rL \Lambda$ for icc property
(T) groups $G, \Lambda$ implies $G \cong \Lambda$. This conjecture
remains wide open, notably for $G = \SL(n,\Z)$, $n \geq 3$. Note
however that by \cite{CH88}, if $G,\Lambda$ are lattices in
$\Sp(n,1)$, respectively $\Sp(m,1)$, then $\rL G \cong \rL \Lambda$
implies $n=m$. Along these lines, several recent results in
deformation rigidity theory provide classes of groups $\Cal G$ for
which any isomorphism $\rL G \simeq \rL \Lambda$, with $G, \Lambda
\in \Cal G$, entails isomorphism of the groups $G \simeq \Lambda$
(see e.g. \cite{Po01b,Po04,IPP05,PV06}, etc). This is for instance
the case for the class $\Cal G$ of all wreath product groups
$\mathbb Z/2\mathbb Z \wr \Gamma$ with $\Gamma$ having property (T)
\cite{Po04}. At the opposite end, using \cite{Co75} and free
probability it has been shown that $\rL (\Gamma_1
* \Gamma_2 * \cdots * \Gamma_n) \simeq \rL \F_n$, for any infinite
amenable groups $\Gamma_i$ and $n\geq 2$, see \cite{Dy92b}. Other
unexpected isomorphisms between group factors can be found in
Section \ref{sec.counterex}.

In fact, more than just distinguishing between property (T) group
factors, a positive answer to Connes' rigidity conjecture implies
that the II$_1$ factor $\rL G$ of an icc property (T) group $G$
uniquely determines the group $G$. Indeed, by \cite{CJ83}, if $\rL G
\simeq \rL \Lambda$ and $G$ has property (T), then $\Lambda$
automatically has this property, showing that in Connes' conjecture
it is sufficient to assume property (T) only on the group $G$. This
gives its statement a W$^*$-superrigidity flavor, in the same spirit
as the recent superrigidity results for group measure space II$_1$
factors (\cite{PV09,Io10}), showing that certain classes of free
ergodic probability measure preserving group actions $G \actson
(X,\mu)$ can be completely recovered from their associated II$_1$
factors $\rL^\infty(X) \rtimes G$.

However, the superrigidity question for group factors is much
harder, and all this progress in group measure space factors could
not be exploited to obtain even one single example of a W$^*$-{\it
superrigid icc group} $G$, i.e.\ for which $\rL G$ completely
remembers $G$, in the sense that any isomorphism of $\rL G$ and an
arbitrary group factor $\rL \Lambda$ forces the groups $G, \Lambda$
to be isomorphic.

In this paper we provide a large class of generalized wreath product
groups $G$ which are W$^*$-superrigid. For instance, we show that
given ANY non-amenable group $\Gamma$, its canonical
``augmentation'' $G = \mathbb (Z/2\mathbb Z)^{(I)} \rtimes
(\Gamma\wr \mathbb Z)$ is superrigid, where the set $I$ is the
quotient $(\Gamma\wr \mathbb Z)/\mathbb Z$ on which the group
$\Gamma\wr \mathbb Z=\Gamma^{(\mathbb Z)}\rtimes \mathbb Z$ acts by
left multiplication. In fact, we show that any isomorphism between
$\rL G$ and an arbitrary group factor $\rL \Lambda$ is implemented
by an isomorphism of the groups. More precisely, we prove the
following general result:

\begin{theorem}\label{thm.specialmain}
Let $\Gamma_0$ be any non-amenable group and let $S$ be any infinite amenable group.
Define the wreath product group $\Gamma = \Gamma_0^{(S)} \rtimes S$ and consider
the action of $\Gamma$ on $I = \Gamma/ S$ by left multiplication. Let $n$ be a square-free integer
and define the generalized wreath product group
$$G = \bigl({\textstyle\frac{\Z}{n \Z}}\bigr)^{(I)} \rtimes \Gamma \; .$$
If $\Lambda$ is any countable group and $\pi : \rL \Lambda \recht \rL(G)^t$ a surjective
$*$-isomorphism for some $t > 0$, then $t=1$ and $\Lambda \cong G$.

In the special case where $n = 2,3$, the $*$-isomorphism $\pi$ is necessarily group-like:
there exists an isomorphism of groups $\delta : \Lambda \recht G$, a character $\om : \Lambda \recht \T$
and a unitary $w \in \rL G$ such that
$$\pi(v_s) = \om(s) \, w \, u_{\delta(s)} \, w^* \quad\text{for all}\;\; s \in \Lambda \; .$$
Here $(v_s)_{s \in \Lambda}$ and $(u_g)_{g \in G}$ denote the canonical generating unitaries
of $\rL \Lambda$, resp.\ $\rL G$.
\end{theorem}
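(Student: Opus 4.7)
The plan is to exploit that $G = A \rtimes \Gamma$ with $A = (\Z/n\Z)^{(I)}$ abelian, so that $\rL G \cong \rL^\infty(\widehat{A}) \rtimes \Gamma$ is a group measure space II$_1$ factor associated to the generalized Bernoulli action $\Gamma \actson \widehat{A} = (\Z/n\Z)^I$. The entire argument is aimed at forcing the opposite side of the isomorphism $\pi : \rL \Lambda \recht \rL(G)^t$ to also admit such a group measure space decomposition, after which two applications of Popa-type rigidity (uniqueness of Cartan and cocycle superrigidity) pin down the group structure $\Lambda \cong G$.

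Step One is to establish uniqueness of the group measure space Cartan subalgebra of $\rL(G)^t$ up to unitary conjugacy. Since $\Gamma \actson \widehat{A}$ is a generalized Bernoulli action and $\Gamma$ contains the non-amenable subgroup $\Gamma_0^{(S)}$, the Popa--Vaes uniqueness-of-Cartan framework for Bernoulli-type actions should apply: combining Popa's malleable deformation for $\Gamma \actson \widehat{A}$ with the spectral gap provided by non-amenability of $\Gamma_0^{(S)}$ inside $\Gamma$, one intertwines any group measure space Cartan of $\rL(G)^t$ into $\rL^\infty(\widehat{A})$.

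Step Two, the crux and main obstacle, is to produce a group measure space decomposition on the $\rL \Lambda$-side, i.e.\ to detect inside $\Lambda$ the normal abelian subgroup playing the role of $A \subset G$. Following Ioana's comultiplication strategy for W$^*$-superrigidity of Bernoulli actions, I would transport the canonical $*$-homomorphism $\Delta : \rL \Lambda \recht \rL \Lambda \ovt \rL \Lambda$ defined by $v_s \mapsto v_s \ot v_s$ through $\pi$ to a trace-preserving embedding $\Delta : \rL(G)^t \recht \rL(G)^t \ovt \rL(G)^t$. The aim is then to show, via intertwining-by-bimodules inside the tensor square, that $\Delta(\rL^\infty(\widehat{A}))$ can be unitarily conjugated into $\rL^\infty(\widehat{A}) \ovt \rL^\infty(\widehat{A})$. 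The wreath product structure $\Gamma = \Gamma_0^{(S)} \rtimes S$ supplies both the rigidity coming from the non-amenable $\Gamma_0^{(S)}$ and, via the infinite amenable $S$, the requisite mixing; the square-free hypothesis on $n$ provides the spectral freedom needed for the intertwining to close. Verifying this intertwining is the real obstacle and is what ultimately produces a group measure space Cartan of $\rL \Lambda$ matching $\rL^\infty(\widehat{A})$ after conjugation by some unitary $w \in \rL(G)^t$.

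Step Three is to upgrade the resulting matching of orbit equivalence relations to an isomorphism of groups. One now has two free ergodic p.m.p.\ actions on $\widehat{A}$ with the same orbits, and the change-of-groups is encoded by a Zimmer $1$-cocycle with values in $\Gamma$. The generalized Bernoulli action $\Gamma \actson \widehat{A}$ is cocycle superrigid in the sense of Popa, because $\Gamma$ contains the non-amenable $\Gamma_0^{(S)}$ and the action is $s$-malleable and mixing relative to the infinite amenable $S$; hence this cocycle is cohomologous to a homomorphism, yielding a conjugacy of actions that forces a group isomorphism $\delta : \Lambda \recht G$ and $t = 1$. For the special case $n = 2, 3$, the dual cocycle with values in $\widehat{A} = (\Z/n\Z)^I$ can similarly be untwisted, because $\Aut(\Z/n\Z)$ is trivial for $n = 2$ and equal to $\Z/2\Z$ for $n = 3$; this collapses the residual ambiguity to a single character $\om : \Lambda \recht \T$, and the conjugating unitary $w$ from Step Two then produces the displayed formula $\pi(v_s) = \om(s) \, w \, u_{\delta(s)} \, w^*$.
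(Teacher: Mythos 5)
Your starting point --- transporting the comultiplication $\Delta(v_s)=v_s\ot v_s$ through $\pi$ and analysing it by deformation/rigidity inside $\rL(G)^t\ovt\rL(G)^t$ --- is indeed the paper's strategy, but the proposal breaks down exactly where the real work happens. After your Steps One and Two, all one can hope to have is a unitary $\Omega$ with $\Omega^*\Delta(A)\Omega\subset A\ovt A$ and (after the conjugacy criterion) $\Omega^*\Delta(u_g)\Omega=\omega(g)\,u_{\delta_1(g)}\ot u_{\delta_2(g)}$, where $A$ denotes the canonical Cartan subalgebra of $\rL G$. This does \emph{not} produce "a group measure space Cartan of $\rL\Lambda$": $\Lambda$ is an arbitrary countable group, $\rL\Lambda$ carries no distinguished abelian subalgebra coming from $\Lambda$, and there is no second crossed product decomposition of $\rL G$ over $A$ to which an orbit-equivalence/Zimmer-cocycle argument (or a uniqueness-of-Cartan theorem) could be applied. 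To turn the unitary conjugacy into group-theoretic information one must show that $\Omega$ is a coboundary, $\Omega=\Delta(w^*)(w\ot w)$; only then does the unconjugated comultiplication satisfy $\de(\pi^{-1}(A))\subset\pi^{-1}(A)\ovt\pi^{-1}(A)$ and $\de(\pi^{-1}(u_g))=\pi^{-1}(u_g)\ot\pi^{-1}(u_g)$, so that Lemma \ref{lem.elemcomult} yields an abelian subgroup $\Sigma<\Lambda$ with $\pi^{-1}(A)=\rL\Sigma$ and $\Lambda\cong\Sigma\rtimes\Gamma$. This untwisting is precisely the paper's main novelty, the vanishing theorem for symmetric dual $2$-cocycles (Theorem \ref{thm.triv-2-coc}), and your proposal contains no substitute for it.

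The endgame via cocycle superrigidity also cannot give $\Lambda\cong G$: even granting the $\Gamma$-equivariant identification $\rL\Sigma\cong A$, the measure-theoretic picture does not determine the group $\Sigma$. Theorem \ref{thm.counterex} produces torsion-free groups $\Lambda$ with $\rL\Lambda\cong\rL(H_0\wr\Gamma)$ precisely because for plain wreath products $\Sigma$ need not be $H_0^{(\Gamma)}$, even though the dual actions are conjugate. What forces $\Sigma\cong(\Z/n\Z)^{(I)}$ in the theorem at hand is the extra feature that $\Stab i\cdot j$ is infinite for all $i\neq j$ (valid for $I=\Gamma/S$ with $\Gamma=\Gamma_0^{(S)}\rtimes S$): it shows via the comultiplication that $\Sigma=\bigoplus_{i\in I}\Sigma_i$ with $\rL\Sigma_i\cong\rL(\Z/n\Z)$, and square-freeness of $n$ is then used purely group-theoretically to conclude $\Sigma_i\cong\Z/n\Z$. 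It plays no role in the intertwining analysis, contrary to your "spectral freedom" remark. Your observation for $n=2,3$ is in the right spirit --- every $*$-isomorphism $\rL\Sigma_i\recht\rL(\Z/n\Z)$ is implemented by a group isomorphism and a character --- but it only becomes available after the $2$-cocycle has been trivialized and the splitting of $\Sigma$ has been established.
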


Theorem \ref{thm.main} below provides a much wider class of generalized wreath product groups
$G=\frac{\Z}{n\Z} \wr_I \Gamma$ such that the group factor $\rL G$ remembers the group $G$.

The conclusions of Theorem \ref{thm.specialmain} do not hold however
for plain wreath products $G = \frac{\Z}{n\Z} \wr \Gamma$.
Nevertheless we will see in Theorem \ref{thm.main-gen} that it is
still possible to describe more or less explicitly all groups
$\Lambda$ with $\rL G \cong \rL \Lambda$. But this description does
not allow to classify these groups $\Lambda$ up to isomorphism. The
groups $\Lambda$ with $\rL G \cong \rL \Lambda$ can be quite
different from $G$, as illustrated by the following result that we prove in Section \ref{sec.counterex}.

\begin{theorem} \label{thm.counterex}
Let $\Gamma$ be a non-trivial torsion free group and $H_0$ a non-trivial finite abelian group.
Then, there exists a torsion free group $\Lambda$ with $\rL(\Lambda) \cong \rL (H_0 \wr \Gamma)$.
In particular, $\Lambda \not\cong H_0 \wr \Gamma$.

Let $n \geq 2$ and let $H_0$ be a non-trivial finite abelian group. There are infinitely many
non-isomorphic groups $\Lambda$ for which $\rL \Lambda \cong \rL(H_0 \wr \PSL(n,\Z))$.
\end{theorem}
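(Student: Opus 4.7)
The plan is to realise $\rL(H_0 \wr \Gamma)$ as $\rL \Lambda$ for a torsion-free $\Lambda$ by exhibiting $\Lambda$ as a semidirect product $B \rtimes \Gamma$, where $B$ is a countable torsion-free abelian group carrying a $\Gamma$-action. Any such $\Lambda$ is automatically torsion-free: an element of finite order in $\Lambda$ projects to a torsion element of $\Gamma$, hence to the identity, so it lies in $B$, which has no torsion. Since
$$\rL \Lambda \;=\; \rL B \rtimes \Gamma \;=\; \rL^\infty(\hat B) \rtimes \Gamma \quad\text{and}\quad \rL(H_0 \wr \Gamma) \;=\; \rL^\infty(\hat H_0^\Gamma) \rtimes \Gamma \; ,$$
the task reduces to choosing $B$ and its $\Gamma$-action so that the two crossed products are $*$-isomorphic, with the Bernoulli action on the right.

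The natural candidate is $\Lambda = \Z \wr \Gamma = \Z^{(\Gamma)} \rtimes \Gamma$, giving $\hat B = \T^\Gamma$ with the Bernoulli $\Gamma$-shift. The core step is then the $*$-isomorphism $\rL^\infty(\T^\Gamma) \rtimes \Gamma \cong \rL^\infty(\hat H_0^\Gamma) \rtimes \Gamma$ between two Bernoulli crossed products with different base spaces. For amenable $\Gamma$ this is immediate from Connes--Feldman--Weiss (both are the hyperfinite II$_1$ factor). For general $\Gamma$ one attempts an explicit orbit equivalence between the two Bernoulli actions and invokes the Feldman--Moore identification of the crossed product with $\rL \cR$; the $*$-isomorphism need not be Cartan-preserving, so the actions need only yield isomorphic crossed products rather than be conjugate, which is essential since their (sofic) entropies differ. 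That $\Z \wr \Gamma \not\cong H_0 \wr \Gamma$ is immediate from torsion.

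For the second statement with $\Gamma = \PSL(n,\Z)$, one produces infinitely many pairwise non-isomorphic $\Lambda$ by varying $B$ in the construction: take $B = A^{(\Gamma)}$ for different countable torsion-free abelian groups $A$ (for instance $\Z$, $\Z[1/k]$ for varying $k$, or finite-rank torsion-free subgroups of $\Q^r$), or introduce outer cocycle twists in the semidirect product. The resulting groups are distinguished by group-theoretic invariants (divisibility and rank of abelianisations of suitable finite-index subgroups), while each satisfies $\rL \Lambda \cong \rL(H_0 \wr \PSL(n,\Z))$ by the same crossed product argument from the first part.

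The main obstacle is the crossed product isomorphism between Bernoulli actions with different bases when $\Gamma$ is non-amenable: measurable conjugacy is ruled out by the entropy difference, and for rigid $\Gamma$ (e.g.\ property (T)) even orbit equivalence fails by Popa's OE-rigidity for Bernoulli actions. In such cases the isomorphism must be obtained by exploiting the non-uniqueness of Cartan subalgebras in $\rL(H_0 \wr \Gamma)$---matching the natural Bernoulli Cartan to a different Cartan arising from $\Lambda$---which is the delicate heart of the construction.
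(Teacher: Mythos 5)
There is a genuine gap at the step you yourself flag as ``the delicate heart of the construction'': the existence of a $*$-isomorphism $\rL^\infty(\T^\Gamma)\rtimes\Gamma\cong\rL^\infty(\widehat{H_0}^\Gamma)\rtimes\Gamma$ (Bernoulli crossed products with a diffuse base versus a finite base) is never proved, and it is in fact \emph{false} for large classes of groups that the theorem must cover. The statement is for every non-trivial torsion-free $\Gamma$, so it includes torsion-free icc property (T) groups and, in the second part, $\Gamma=\PSL(n,\Z)$ with $n\geq 3$. For such $\Gamma$, the W$^*$-superrigidity theorem for Bernoulli actions of property (T) groups (\cite[Theorem A]{Io10}, quoted throughout this paper) shows that any isomorphism of these two crossed products would force the two Bernoulli actions to be conjugate, which is impossible: the finite-base and diffuse-base Bernoulli actions of a sofic (e.g.\ residually finite) group are distinguished by (sofic) entropy, exactly the obstruction you mention. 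Your proposed escape via ``non-uniqueness of Cartan subalgebras'' is closed off in precisely these hard cases, since for Bernoulli actions of property (T) groups (and of the other rigid groups relevant here) the group measure space Cartan subalgebra is unique up to unitary conjugacy by \cite{PV09,Io10}. So the candidate $\Lambda=\Z\wr\Gamma$, and likewise the variants $B=A^{(\Gamma)}$ with $A$ torsion-free abelian proposed for the second part, simply do not satisfy $\rL\Lambda\cong\rL(H_0\wr\Gamma)$ in general; the semidirect-product framework is right, but the choice of $B$ and of the $\Gamma$-action is the whole problem, and it cannot be an action whose dual is a Bernoulli action with the wrong base entropy.

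The way around this, which is what the paper does, is to produce a torsion-free abelian $\Sigma$ with a $\Gamma$-action by automorphisms whose dual action is honestly \emph{conjugate} to the Bernoulli action $\Gamma\actson\widehat{H_0}^\Gamma$, so that no isomorphism of non-conjugate actions is ever needed and the Cartan subalgebras match on the nose. The entropy obstruction is avoided by matching base entropies: the automorphism of the solenoid $\widehat{\Z[1/|H_0|]}$ dual to multiplication by $|H_0|$ on $\Sigma_0:=\Z[1/|H_0|]$ is measurably conjugate to the Bernoulli $|H_0|$-shift over $\Z$ (both have entropy $\log|H_0|$). Since $\Gamma$ is non-trivial and torsion-free it contains a copy of $\Z$, and co-inducing this $\Z$-action along $\Z<\Gamma$ yields $\Sigma=\Sigma_0^{(\Gamma/\Z)}$, torsion-free, with $\Gamma\actson\widehat{\Sigma}$ conjugate to the Bernoulli action with base $\widehat{H_0}$; hence $\Lambda=\Sigma\rtimes\Gamma$ is torsion-free and $\rL\Lambda\cong\rL(H_0\wr\Gamma)$. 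For the second part one keeps this single construction and instead varies the embedding $\Z\hookrightarrow\PSL(n,\Z)$, distinguishing the resulting groups $\Lambda$ by a purely group-theoretic argument; varying the abelian base $A$ as you suggest runs into the same false isomorphism as above.
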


We mention that in the final Section \ref{sec.Wstarsuperrigidity} we
show that some of our methods allow to extend \cite[Theorem A]{Io10}
and prove W$^*$-superrigidity for Bernoulli actions $\Gamma \actson
(X_0,\mu_0)^\Gamma$ of groups $\Gamma$ that admit an infinite normal
subgroup with non-amenable centralizer. We refer to Theorem
\ref{thm.Wstarsuperrigid} for a precise statement.

\subsection*{Structure of the article and comments on the proofs}

The fact that large classes of generalized wreath product groups
turn out to be W$^*$-superrigid should come as no surprise, since
such groups have been recognized for some time to be ``exceptionally
rigid'' in the von Neumann algebra context (cf.
\cite{Po01a,Po03,Po04,Po06a,PV06,PV09,Io06,CI08,Io10}). This is due
to the Bernoulli type crossed-product decomposition that a wreath
product group has, $G=H\wr_I \Gamma=H^{(I)}\rtimes \Gamma$, a
feature that makes its associated von Neumann algebra $M=\rL G$
``distinctly soft'' on the side of $\rL H^{(I)}\subset M$, once $H$
is assumed amenable. Such softness is a consequence of the {\it
malleable deformations} that II$_1$ factors arising from Bernoulli
actions were shown to have (\cite{Po01a,Po03}). This property allows
the recovery of all ``rigid parts'' of $\rL \Gamma$, such as
subalgebras generated by subgroups $\Gamma_0\subset \Gamma$ having
either relative property (T), or non-amenable centralizer. Playing
rigidity against deformability properties of an algebra in this
manner became a paradigm of deformation/rigidity theory (see
\cite{Po01a,Po03,Po04,Po06a,Po06b}). Then in \cite{PV06} it was
realized that if $\Gamma \curvearrowright I$ is of the form $\Gamma
\curvearrowright \Gamma/\Gamma_0$, with $\Gamma_0$ a ``malnormal''
subgroup of $\Gamma$, the overall rigidity of $M$ can be
considerably enhanced, while the discovery in \cite{Io06} of a new
malleable deformation for generalized Bernoulli actions and wreath
product groups unraveled more of their rigidity properties. The
recent work in \cite{PV09,Io10} pushed the deformation/rigidity
analysis of such group actions even deeper, notably through the
systematic usage of ``comultiplication''-type embeddings $\Delta:M
\hookrightarrow M\overline{\otimes} M$ in \cite{Io10} (cf. also
\cite{PV09}).

In order to prove Theorem \ref{thm.specialmain} we use the entire
arsenal of ideas and techniques developed in these previous papers.
Yet recovering the discrete structure $G=H\wr_I \Gamma$ (rather than
the action $\Gamma \curvearrowright \rL H^{(I)}$, as in
\cite{Po04,PV09,Io10}) inside the algebra $\rL G$ requires more
intricate deformation/rigidity arguments and a lot of technical
effort. This work, which takes the entire Sections \ref{sec.tensorlength} through \ref{sec.proof},
leads us to a crucial correlation between $G$ and any other group
implementing the same von Neumann algebra. More precisely, we show
that the comultiplication on $\rL G$ induced by an arbitrary group
$\Lambda \subset \rL G$ satisfying $\rL \Lambda = \rL G$ is
unitarily conjugate to the initial comultiplication induced by $G$,
with the corresponding unitary satisfying a ``dual'' $2$-cocycle
relation\footnote{To be more precise we only find a unitary $\Omega$
satisfying the formulas \eqref{eq.specialform} on page \pageref{eq.specialform},
but this suffices to deduce that $\Omega$ is a dual $2$-cocycle.}.
One of the big novelties in this paper is how we derive an
isomorphism of the groups $G,\Lambda$ out of this $2$-cocycle. We do
this in Theorem \ref{thm.triv-2-coc}, which is essentially a
vanishing of $2$-cohomology result.

One should mention that a particular case of this result, which we
emphasize separately as Theorem \ref{thm.isogroupvnalg}, provides a
surprising characterization for the unitary conjugacy of arbitrary
icc groups $\Lambda$, $G$ giving the same II$_1$ factor, $\rL
\Lambda = \rL G$. To state it, we use the (asymmetric) Hausdorff
distance between subgroups $\cU$ and $\cV$ of the unitary group $\cU(M)$ of a II$_1$ factor,
defined by
$$\dist(\cU,\cV) := \sup_{u \in \cU} \Bigl( \inf_{v \in \cV} \|u-v\|_2\Bigr) \; .$$
Denote by $\T \cU$ the group of unitaries $\lambda u$, $\lambda \in \T$, $u \in \cU$ and
notice that $\dist(\T \cU,\T \cV)\leq \sqrt{2}$ for any subgroups
$\cU, \cV \subset \cU(M)$. We prove in Theorem
\ref{thm.isogroupvnalg} that if $M=\rL G = \rL \Lambda$ are two
group von Neumann algebra decompositions of the same II$_1$ factor
$M$ then $\dist(\T G, \T \Lambda) < \sqrt{2}$ if and only if $\T
\Lambda$ and $\T G$ are conjugate by a unitary in $M$.

To describe in more details the content of Sections \ref{sec.tensorlength}-\ref{sec.proof}, let $G =
H_0 \wr_I \Gamma$ be a generalized wreath product group as in
Theorem \ref{thm.specialmain} (or the more general Theorem
\ref{thm.main}). Write $M := \rL G$ and assume that $M = \rL
\Lambda$ is another group von Neumann algebra decomposition. Denote
by $\Delta : M \recht M \ovt M : \Delta(v_s) = v_s \ot v_s, s \in
\Lambda$, the comultiplication corresponding to the decomposition $M
= \rL \Lambda$. Observe that $M = \rL G$ can be viewed as the group
measure space construction $M = \rL^\infty(X_0^I) \rtimes \Gamma$,
where $X_0 = \widehat{H_0}$ is the Pontryagin dual of $H_0$ equipped
with the Haar probability measure and where $\Gamma \actson X_0^I$
is the generalized Bernoulli action.

In \cite{Io10}, a classification result for embeddings $\Delta : M
\recht M \ovt M$ was obtained in the case where $M =
\rL^\infty(X_0^\Gamma) \rtimes \Gamma$ is the group measure space
II$_1$ factor given by the \emph{plain} Bernoulli action of an icc
property (T) group $\Gamma$, or more generally an icc group $\Gamma$
that admits an infinite normal subgroup with the relative property
(T). We extend these results to generalized Bernoulli actions. This
generalization is technically painful, but unavoidable in the light
of Theorem \ref{thm.counterex}.

We analyze the embedding $\Delta : M \recht M \ovt M$ in three different steps, corresponding to the Sections \ref{sec.tensorlength}, \ref{sec.intertwine-abelian} and \ref{sec.conjugacy-actions}. In this analysis we use much of the ideas and techniques developed in deformation/rigidity theory over the last years. Nevertheless, apart
from the preliminary Section \ref{sec.prelim} where we also recall the notion of intertwining bimodules \cite{Po03}, our article is essentially self-contained and the Sections \ref{sec.tensorlength}, \ref{sec.intertwine-abelian} and \ref{sec.conjugacy-actions} contain independent results, each having an interest on their own.

We write $A = \rL^\infty(X_0^I)$ and denote by $(u_g)_{g \in \Gamma}$ the canonical unitaries in the crossed product $M = A \rtimes \Gamma$.

\begin{itemlist}
\item In Section \ref{sec.tensorlength} we elaborate results from \cite{Po03,Io06} implying that under
the right assumptions rigid subalgebras of generalized Bernoulli crossed products
$M = \rL^\infty(X_0^I) \rtimes \Gamma$ have an intertwining bimodule into $\rL \Gamma$, see Corollary
\ref{cor.malleable-rigid}. Following \cite{Io06} we consider the ``tensor length deformation''
$\theta_\rho : M \recht M$ which is roughly defined as $\theta_\rho(F u_g) = \rho^n F u_g$ when $g \in \Gamma$
and $F \in \rL^\infty(X_0^I)$ only depends on $n$ variables in $I$. In Theorem \ref{thm.malleable}
we describe which subalgebras $Q \subset M$ have the property that $\theta_\rho$ converges uniformly
to the identity on the unit ball of $Q$. This result readily applies when $Q \subset M$ has the relative property (T),
but also when $Q$ has a non-amenable relative commutant, by the spectral gap argument from \cite{Po06a}.

Applied to the above comultiplication $\Delta : M \recht M \ovt M$ we will be able to assume that after a unitary conjugacy $\Delta(\rL \Gamma) \subset \rL (\Gamma \times \Gamma)$.

\item In Section \ref{sec.intertwine-abelian} we prove the following. Assume that $M = \rL^\infty(X_0^I) \rtimes \Gamma$ is a generalized Bernoulli crossed product and write $A = \rL^\infty(X_0^I)$. If $D \subset M \ovt M$ is an abelian von Neumann subalgebra that is normalized by many unitaries in $\rL (\Gamma \times \Gamma)$ and if a number of conditions are satisfied, then the relative commutant $D' \cap M \ovt M$ can be essentially unitarily conjugated into $A \ovt A$. This result and its proof are very similar to \cite[Theorem 6.1]{Io10} and very much inspired by the clustering sequences techniques from \cite[Sections 1-4]{Po04}.

Applied to the above comultiplication $\Delta : M \recht M \ovt M$ we may essentially assume that after a unitary conjugacy $\Delta(A)' \cap M \ovt M = A \ovt A$.

\item In Section \ref{sec.conjugacy-actions} we provide a very general conjugacy criterion for actions.
Let $N = B \rtimes \Lambda$ and $M = A \rtimes \Gamma$ be group measure space II$_1$ factors.
Assume that $N \subset M$ in such a way that there exist intertwining bimodules from $B$ into $A$ and
from $\rL \Lambda$ into $\rL \Gamma$. Under a few extra conditions, we conclude that there exists
a unitary $\Omega \in M$ such that $\Ad \Omega$ maps $B$ into $A$ and $\T \Lambda$ into $\T \Gamma$.
We refer to Theorem \ref{thm.conjugacy-actions} for a precise statement.

Applied to the above comultiplication $\Delta : M \recht M \ovt M$, it ultimately follows that there exists a unitary $\Omega \in M \ovt M$ such that
\begin{equation}\label{eq.specialform}
\Om^* \de(u_g) \Om = \om(g) \, u_{\delta_1(g)} \ot u_{\delta_2(g)}
\quad\text{for all}\;\; g \in \Gamma \quad\text{and}\quad \Om^* \de(A) \Om \subset A \ovt A \; ,
\end{equation}
for some group homomorphisms $\delta_i : \Gamma \recht \Gamma$, $\om: \Gamma \recht \T$.
\end{itemlist}

Once \eqref{eq.specialform} above is established, we conclude that $\Omega \in \rL \Lambda \ovt \rL \Lambda$ satisfies a $2$-cocycle and a symmetry relation. By the above mentioned vanishing of $2$-cohomology Theorem \ref{thm.triv-2-coc}, the main Theorems \ref{thm.specialmain} and \ref{thm.main} will follow.

We finally refer to the lecture notes \cite{Va11} for an introduction to the results and techniques of this paper and \cite{Io10}.

\section{Preliminaries}\label{sec.prelim}

\subsection{Intertwining-by-bimodules}\label{subsec.intertwine}

We recall from \cite[Theorem 2.1 and Corollary 2.3]{Po03} the theory of \emph{intertwining-by-bimodules,} summarized in the following definition.

\begin{definition}\label{def.intertwine}
Let $(M,\tau)$ be a tracial von Neumann algebra with separable predual and $P,Q \subset M$ possibly non-unital von Neumann subalgebras. We write $P \embed_M Q$ (or $P \embed Q$ if there is no risk of confusion) when one of the following equivalent conditions is satisfied.
\begin{itemlist}
\item There exist projections $p \in P$, $q \in Q$, a $*$-homomorphism $\vphi : pPp \recht qQq$ and a non-zero partial isometry $v \in pMq$ such that $x v = v \vphi(x)$ for all $x \in pPp$.
\item There exist a projection $q \in \M_n(\C) \ot Q$, a $*$-homomorphism $\vphi : P \recht q(\M_n(\C) \ot Q)q$ and a non-zero partial isometry $v \in (\M_{1,n}(\C) \ot 1_P M)q$ such that $x v = v\vphi(x)$ for all $x \in P$.
\item It is impossible to find a sequence $u_n \in \cU(P)$ satisfying $\|E_Q(x u_n y^*)\|_2 \recht 0$ for all $x,y \in 1_Q M 1_P$.
\item There exists a subgroup $\cU \subset \cU(P)$ generating $P$ as a von Neumann algebra for which it is impossible to find a sequence $u_n \in \cU$ satisfying $\|E_Q(x u_n y^*)\|_2 \recht 0$ for all $x,y \in 1_Q M 1_P$.
\end{itemlist}
\end{definition}

\begin{remark}\label{rem.intertwine}
We freely use the following facts about the embedding property $\embed$.

If $Q_k \subset M$ is a sequence of von Neumann subalgebras and $P \not\embed Q_k$ for all $k$, considering the diagonal inclusion of $P$ into matrices over $M$ together with the subalgebra $Q_1 \oplus \cdots \oplus Q_l$, we find a sequence of unitaries $u_n \in \cU(P)$ such that for all $k$ and all $x,y \in 1_{Q_k} M 1_P$, we have $\|E_{Q_k}(x u_n y^*)\|_2 \recht 0$ (see e.g.\ \cite[Remark 3.3]{Va07} for details).

If $p \in P$ is a non-zero projection and $pPp \embed Q$, then $P \embed Q$ (see e.g.\ \cite[Lemma 3.4]{Va07}). Also, if $P \embed Q$ and $B \subset Q$ has finite index, then $P \embed B$ (see e.g.\ \cite[Lemma 3.9]{Va07}). Finally, although $\embed$ is not transitive, the following holds for von Neumann subalgebras $P$ and $B \subset Q$. If $P \embed Q$ and $P \not\embed B$, the $*$-homomorphism $\vphi$ in Definition \ref{def.intertwine} can be chosen in such a way that the subalgebra $\vphi(P) \subset Q$ satisfies $\vphi(P) \not\embed_Q B$ (see e.g.\ \cite[Remark 3.8]{Va07}).
\end{remark}

\subsection{Bimodules and weak containment}\label{subsec.bimod}

Let $M,N$ be tracial von Neumann algebras. An \emph{$M$-$N$-bimodule $\bim{M}{\cH}{N}$} is a Hilbert space $\cH$ equipped with a normal unital $*$-homomorphism $\pi: M \recht \B(\cH)$ and a normal unital $*$-anti-homomorphism $\pi':N \recht \B(\cH)$ such that $\pi(M)$ and $\pi'(N)$ commute. We write $x\xi y$ instead of $\pi(x) \pi'(y) \xi$.
The bimodule $\bim{M}{\rL^2(M)}{M}$ is called the \emph{trivial bimodule} and $\bim{(M \ot 1)}{\rL^2(M \ovt M)}{(1 \ot M)}$ is called the \emph{coarse bimodule.}
Given the bimodules $\bim{M}{\cH}{N}$ and $\bim{N}{\cK}{P}$, one can define the \emph{Connes tensor product} $\cH \ot_N \cK$ which is an $M$-$P$-bimodule, see \cite[V.Appendix B]{Co94}.

Every $M$-$N$-bimodule $\bim{M}{\cH}{N}$ gives rise to a $*$-homomorphism $\pi_\cH : M \otalg N\op \recht \B(\cH)$ given by $\pi_\cH(x \ot y)\xi = x\xi y$. We say that $\bim{M}{\cH}{N}$ is \emph{weakly contained} in $\bim{M}{\cK}{N}$, and write $\cH \weakcont \cK$, if $\|\pi_\cH(T)\| \leq \|\pi_\cK(T)\|$ for all $T \in M \otalg N\op$. Recall that $\cH \weakcont \cK$ if and only if $\bim{M}{\cH}{N}$ lies in the closure (for the Fell topology) of all finite direct sums of copies of $\bim{M}{\cK}{N}$.

For later use we record the following easy lemma and give a proof for the convenience of the reader.

\begin{lemma}\label{lem.sequence}
Let $(M,\tau)$ be a tracial von Neumann algebra and $P \subset pMp$ a von Neumann subalgebra. Let $\bim{P}{\cH}{M}$ be a $P$-$M$-bimodule and $\kappa > 0$. Assume that $\xi_n \in \cH$ satisfies
$$\|a \xi_n - \xi_n a \| \recht 0 \;\;\forall a \in P \;\; , \;\; \|\xi_n x \| \leq \kappa \|x\|_2 \;\;\forall n \in \N \;,\; x \in M \;\; , \;\; \limsup_n \|\xi_n p\| > 0 \; .$$
Then there is a non-zero projection $p_1 \in P' \cap pMp$ such that $\bim{P}{\rL^2(p_1 M)}{M}$ is weakly contained in $\bim{P}{\cH}{M}$.
\end{lemma}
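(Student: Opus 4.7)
The plan is to use the right-boundedness of $\xi_n$ to realize each $\xi_n$ as a bounded operator $T_n : \rL^2(M) \recht \cH$, extract a weak-$*$ limit of $T_n^* T_n$ living in $P' \cap pMp$, and take $p_1$ to be its range projection. Explicitly, the condition $\|\xi_n x\| \leq \kappa \|x\|_2$ means that the formula $T_n(x) := \xi_n x$ extends to an operator of norm $\leq \kappa$ which intertwines the right $M$-actions; consequently $a_n := T_n^* T_n$ commutes with right multiplication by $M$ on $\rL^2(M)$ and therefore belongs to $M \subset \B(\rL^2(M))$ (as left multiplications), with $0 \leq a_n \leq \kappa^2$. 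After passing to a subnet one may assume $a_n$ converges weakly-$*$ to some $a \in M$.

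Three properties of $a$ have to be checked. First, the asymptotic $P$-centrality of $\xi_n$ translates into
$$\langle (a_n b - b a_n) x, z\rangle_{\rL^2(M)} = \langle \xi_n b x, \xi_n z\rangle - \langle \xi_n x, \xi_n b^* z\rangle \recht 0$$
for all $b \in P$ and $x, z \in M$, giving $a \in P' \cap M$. Second, applying the centrality hypothesis to $b = p$ (which acts as $1_\cH$ on the left, since the left $P$-action is unital) yields $\|\xi_n - \xi_n p\| \recht 0$, whence $\|a_n (1-p)\|_2 \recht 0$ and in the limit $a = pap \in pMp$. Third, refining the subnet so that $\|\xi_n p\|^2$ tends to a strictly positive constant $c^2$ (possible by the $\limsup$ hypothesis), the identity $\tau(a_n) = \|\xi_n\|^2 \geq \|\xi_n p\|^2$ and normality of $\tau$ imply $\tau(a) \geq c^2 > 0$, so $a \neq 0$.

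Now set $\xi := a^{1/2} \in pMp$; since $a \in P'$, so does $\xi$, and its range projection $p_1$ belongs to $P' \cap pMp$ and is nonzero. This is the promised projection. For the weak containment, the key observation is that $\xi$ is a cyclic vector for the $P$-$M$-subbimodule $\rL^2(p_1 M) = p_1 \rL^2(M)$ of $\rL^2(M)$, because $\overline{\xi M}^{\|\cdot\|_2} = p_1 \rL^2(M)$ by definition of the range projection. A direct computation using $\|b \xi_n - \xi_n b\| \recht 0$ and $a = \xi^2$ with $\xi \in P'$ gives, for $b \in P$ and $y \in M$,
$$\omega_n(b \ot y\op) := \langle b \xi_n y, \xi_n\rangle \;\longrightarrow\; \tau(a b y) \;=\; \tau(\xi b \xi y) \;=\; \langle b \xi y, \xi\rangle_{\rL^2(M)} \;=:\; \omega(b \ot y\op).$$

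Hence $\omega$ is simultaneously the vector form of the cyclic vector $\xi \in \rL^2(p_1 M)$ and a pointwise limit of the vector forms $\omega_n$ coming from $\cH$. The trivial estimate $\omega_n(T^* T) = \|\pi_\cH(T) \xi_n\|^2 \leq \|\pi_\cH(T)\|^2 \omega_n(1)$ passes to the limit and gives $\omega(T^* T) \leq \|\pi_\cH(T)\|^2 \omega(1)$ for every $T \in P \otalg M\op$, which is the standard GNS-criterion for the weak containment $\bim{P}{\rL^2(p_1 M)}{M} \weakcont \bim{P}{\cH}{M}$. The only real subtlety is the simultaneous subnet refinement so that the weak-$*$ convergence of $a_n$, the convergence of $\|\xi_n p\|^2$, and the pointwise convergence of all $\omega_n(T)$ hold together; after that the remaining manipulations are routine.
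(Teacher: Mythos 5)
Your proof is correct and follows essentially the same route as the paper: you encode the right-bounded vectors $\xi_n$ as positive elements $a_n \in pM^+p$, pass to a weak$^*$ limit $a \in P' \cap pM^+p$ with $\tau(a)>0$, and identify the limit of the vector states with the state of a cyclic vector of $\rL^2(p_1 M)$. The only cosmetic difference is that you take $p_1$ to be the support projection of $a$ and use $a^{1/2}$ as cyclic vector, whereas the paper multiplies by an element $S \in P' \cap pM^+p$ (so that $T^{1/2}S = p_1$) to make the limit state exactly $\tau(p_1\,\cdot)$; both variants finish with the same GNS-type weak containment argument.
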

\begin{proof}
Replacing $\xi_n$ by $\xi_n p$, we may assume that $\xi_n p = \xi_n$ for all $n$.
Since $\|\xi_n x\| \leq \kappa \|x\|_2$ for all $x \in M$, define $T_n \in p M^+ p$ satisfying $\|T_n \|\leq \kappa$ and $\langle \xi_n,\xi_n x\rangle = \tau(T_n x)$ for all $x \in M$. We have $\|[a,T_n]\|_1 \recht 0$ for all $a \in P$. Since $\tau(T_n) = \| \xi_n\|^2$ and $\|T_n\|$ is bounded, we can pass to a subsequence and assume that $T_n \recht T$ weakly with $T \in p M^+ p$, $\tau(T) > 0$. Note that $T \in P' \cap p M^+ p$. Take $S \in P' \cap p M^+ p$ such that $T^{1/2} S$ is a non-zero projection $p_1$. Define $\eta_n = \xi_n S$. It follows that $\langle \eta_n , a \eta_n x \rangle \recht \tau(p_1 a x)$ for all $a \in P$, $x \in M$. Hence, $\bim{P}{\rL^2(p_1 M)}{M}$ is weakly contained in $\bim{P}{\cH}{M}$.
\end{proof}

\subsection{Relative property (T)}\label{subsec.relT}

Let $(M,\tau)$ be a tracial von Neumann algebra and $P \subset M$ a
von Neumann subalgebra. Following \cite[Proposition 4.1]{Po01b}, we
say that $P \subset M$ has the \emph{relative property (T)} if every
sequence $\vphi_n : M \recht M$ of normal completely positive maps
that are sub-unital ($\vphi_n(1) \leq 1$), subtracial ($\tau \circ
\vphi_n \leq \tau$) and satisfy $\|\vphi_n(x) - x \|_2 \recht 0$ for
all $x \in M$, converges to the identity uniformly on the unit ball
of $P$, i.e.\ $$\sup_{x \in P, \|x\|\leq 1} \|\vphi_n(x) - x \|_2
\recht 0 \; .$$

If $\Gamma_0 < \Gamma_1$ are countable groups, by \cite[Proposition
5.1]{Po01b} the inclusion $\rL \Gamma_0 \subset \rL \Gamma_1$ has
the relative property (T) if and only if $\Gamma_0 < \Gamma_1$ has
the relative property (T) in the usual group theoretic sense.

\subsection{Relative amenability}\label{subsec.relamen}

Recall that a tracial von Neumann algebra $(M,\tau)$ is called \emph{amenable} if the trivial $M$-$M$-bimodule is weakly contained in the coarse $M$-$M$-bimodule.

Fix a tracial von Neumann algebra $(M,\tau)$ and a von Neumann subalgebra $Q \subset M$. \emph{Jones' basic construction} $\langle M,e_Q \rangle$ is defined as the von Neumann subalgebra of $\B(\rL^2(M))$ generated by $M$ (acting on the left) and the orthogonal projection $e_Q$ of $\rL^2(M)$ onto $\rL^2(Q)$. Note that $\langle M,e_Q \rangle$ equals the commutant of the right $Q$-action on $\rL^2(M)$. The basic construction $\langle M,e_Q \rangle$ comes with a semi-finite faithful trace $\Tr$ satisfying $\Tr(a e_Q b) = \tau(ab)$ for all $a,b \in M$. We denote, for $p = 1,2$, by $\rL^p(\langle M,e_Q \rangle)$ the corresponding $\rL^p$-spaces.

Following \cite[Definition 2.2]{OP07}, a von Neumann subalgebra $P \subset pMp$ is said to be \emph{amenable relative to $Q$} if $\bim{P}{\rL^2(p \langle M,e_Q\rangle)}{M}$ weakly contains $\bim{P}{\rL^2(p M)}{M}$. By \cite[Theorem 2.1]{OP07}, $P$ is amenable relative to $Q$ if and only if there exists a sequence $T_n \in p \rL^1(\langle M,e_Q \rangle)^+ p$ satisfying
$$\|a T_n - T_n a \|_1 \recht 0 \;\;\text{for all}\;\; a \in P \;\;\text{and}\;\; \Tr(T_n x) \recht \tau(x) \;\;\text{for all}\;\; x \in pMp \; .$$

We say that a von Neumann subalgebra $P \subset pMp$ is \emph{strongly non-amenable relative to $Q$} if for all non-zero projections $p_1 \in P' \cap pMp$, the von Neumann algebra $P p_1$ is non-amenable relative to $Q$. Equivalently, none of the bimodules $\bim{P}{\rL^2(p_1 M)}{M}$ with $p_1$ a non-zero projection in $P' \cap pMp$, is weakly contained in $\bim{P}{\rL^2(p \langle M,e_Q \rangle)}{M}$.

If $P \subset pMp$ is amenable relative to $Q$ and if $A \subset e M e$ is a von Neumann subalgebra satisfying $A \embed_M P$, then there exists a non-zero projection $f \in A' \cap eMe$ such that $Af$ is amenable relative to $Q$.

Note that $\bim{M}{\rL^2(\langle M,e_Q \rangle)}{M} \cong \bim{M}{\bigl(\rL^2(M) \ot_Q \rL^2(M)\bigr)}{M}$. In particular, a von Neumann subalgebra $P \subset p (N \ovt M)p$ is amenable relative to $N \ot 1$ if and only if $\bim{P}{\rL^2(p (N \ovt M))}{(N \ovt M)}$ is weakly contained in $\bim{(P \ot 1)}{\rL^2(p (N \ovt M) \ovt M)}{(N \ovt 1 \ovt M)}$.

\section{Symmetric dual $2$-cocycles and isomorphism of group von Neumann algebras}\label{sec.intertwine-groups}

The main aim of this section is to prove the following result. Whenever $\Lambda$ is a countable group and $(v_s)_{s \in \Lambda}$ are the canonical unitaries generating $\rL \Lambda$, we denote by $\T \Lambda$ the group of unitaries in $\rL \Lambda$ of the form $\lambda v_s$ for $\lambda \in \T$ and $s \in \Lambda$.

\begin{theorem}\label{thm.isogroupvnalg}
Let $\Gamma$ and $\Lambda$ be icc groups and $\rL \Gamma = \rL \Lambda$. Denote by $(u_g)_{g \in \Gamma}$ and $(v_s)_{s \in \Lambda}$ the respective canonical unitaries. Denote by $$\dist(\T \Gamma, \T \Lambda) = \sup_{u \in \T \Gamma} \Bigl(\inf_{v \in \T \Lambda} \|u-v\|_2 \Bigr)$$ the (asymmetric) upper Hausdorff distance. Then the following two statements are equivalent.
\begin{itemlist}
\item $\dist(\T \Gamma, \T \Lambda) < \sqrt{2}$.
\item There exists a unitary $w \in \rL \Lambda$, a character $\gamma : \Gamma \recht \T$ and an isomorphism of groups $\delta : \Gamma \recht \Lambda$ such that
$$w u_g w^* = \gamma(g) v_{\delta(g)} \quad\text{for all}\;\; g \in \Gamma \; .$$
\end{itemlist}
\end{theorem}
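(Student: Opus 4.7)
Plan. The implication (ii) $\Rightarrow$ (i) I would handle by choosing, for each $u_g \in \T\Gamma$, the approximating element $v = wu_g w^{*} = \gamma(g) v_{\delta(g)} \in \T\Lambda$, so that $\|u_g - v\|_2 = \|[u_g,w]\|_2$. The uniform bound $\sup_g \|[u_g,w]\|_2 < \sqrt 2$ then amounts to showing $\Re \tau(v_{\delta(g)}^{*} w^{*} v_{\delta(g)} w) > 0$ uniformly in $g$. I plan to exploit the ambiguity $w \mapsto \lambda w u_{h_0}$ (which leaves (ii) valid after twisting $\gamma$ and composing $\delta$ with an inner automorphism) to normalize $w$ so that $\tau(w)$ is real and maximal, and then use the icc hypothesis, together with the fact that $w \in \rL \Lambda$ implements an honest group isomorphism $\T\Gamma \to \T\Lambda$, to rule out the ``antipodal'' configurations that would make the commutator trace non-positive on some $v_{\delta(g)}$.

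For the main direction (i) $\Rightarrow$ (ii), I write $c_s(g) = \tau(v_s^{*} u_g)$, so that $\{c_s(g)\}_{s \in \Lambda}$ is an $\ell^2$-unit vector and hypothesis (i) reads $\mu_0 := \inf_g \sup_s |c_s(g)| > 0$. For each $g$ the set $E_g = \{s : |c_s(g)| \geq \mu_0/2\}$ is non-empty and, by Parseval, of cardinality at most $4\mu_0^{-2}$. The technical heart of the argument is to upgrade $\mu_0 > 0$ into a threshold $> 1/\sqrt 2$, at which point Parseval forces each $E_g$ to be a singleton $\{\sigma(g)\}$ and $|c_{\sigma(g)}(g)| \to 1$. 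I plan to do this via the convolution identity $c_r(gh) = \sum_{st = r} c_s(g) c_t(h)$ combined with iteration along powers $g^n$: if $s \in E_g$ and $t \in E_h$, then $c_{st}(gh)$ picks up a distinguished contribution of size $\geq \mu_0^2/4$, and the mixingness of the conjugation action of $\Lambda$ on $\rL^2(M) \ominus \C$ (supplied by the icc assumption on $\Lambda$) should prevent persistent cancellation from the cross-terms under iteration, ultimately forcing the Fourier mass of $u_{g^n}$ to concentrate on a single element.

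Once $\sigma: \Gamma \to \Lambda$ is single-valued, the multiplicative identity $u_g u_h = u_{gh}$ combined with uniqueness of the dominant coefficient forces $\sigma$ to be a group homomorphism up to a $\T$-valued $2$-cocycle; simultaneously, the phases $\gamma(g) := c_{\sigma(g)}(g)/|c_{\sigma(g)}(g)|$ organize into a character-like cochain. Running the symmetric argument with $\Gamma$ and $\Lambda$ interchanged (the bootstrap already gives $\mu_0$ close to $1$, which is symmetric) shows that $\sigma$ is a bijection, hence an isomorphism $\delta : \Gamma \to \Lambda$. The residual twist packages as a dual $2$-cocycle $\Omega \in \rL\Lambda \ovt \rL\Lambda$, and I would invoke Theorem \ref{thm.triv-2-coc} (the vanishing-of-$2$-cohomology result announced in the introduction) to trivialize $\Omega$ and extract the unitary $w \in \rL\Lambda$ realizing $w u_g w^{*} = \gamma(g) v_{\delta(g)}$.

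The main obstacle is the bootstrap in Step 2. The number $\sqrt 2$ in the hypothesis is exactly the $\rL^2$-distance between two distinct group unitaries in $\T\Lambda$, so (i) is the weakest condition that assigns to each $u_g$ a \emph{candidate} nearest neighbor in $\T\Lambda$; the hard part is forcing this nearest neighbor to be uniquely defined, i.e., passing from ``some $|c_s(g)|$ is bounded below'' to ``a unique $|c_{\sigma(g)}(g)|$ is close to $1$''. Extracting this concentration from the convolution dynamics, in a way that uses the group structure of $\Gamma$ and the icc-ness of $\Lambda$ essentially, is where the bulk of the technical work will sit.
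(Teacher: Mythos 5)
Your plan for the main implication (i) $\Rightarrow$ (ii) has a gap that cannot be repaired, because the intermediate goal of your bootstrap is false: hypothesis (i) does not force the heights $h_\Lambda(u_g)$ above $1/\sqrt{2}$, does not make the near-maximal Fourier coefficient unique, and does not give $|c_{\sigma(g)}(g)| \recht 1$, not even along powers $g^n$. Concretely, take $\Lambda = \Gamma = \F_2 = \langle a,b\rangle$, choose a unitary $w \in \{v_a\}'' \cong \rL^\infty(\T)$ all of whose Fourier coefficients $c_k = \tau(w v_{a^k}^*)$ satisfy $|c_k| \leq \eps$ (such unimodular functions exist), and put $u_g := w^* v_g w$. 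Then (ii) holds, hence so does (i). But $u_g = \sum_{k,l} \overline{c_k} c_l \, v_{a^{-k} g a^{l}}$, and for $g \notin \langle a \rangle$ distinct pairs $(k,l)$ give distinct elements $a^{-k}ga^l$ (centralizers in $\F_2$ are cyclic), so every Fourier coefficient of $u_g$ is a single product $\overline{c_k} c_l$: thus $h_\Lambda(u_g) = (\max_k |c_k|)^2 \leq \eps^2$, there are many coefficients of comparable modulus, and $h_\Lambda(u_{b^n})$ is the same for every $n$, so iterating along powers gains nothing. Hence any strategy whose endpoint is concentration of the Fourier mass of the $u_g$ themselves at a single group element must fail; such concentration is only true for $w u_g w^*$, i.e.\ it is the output of the theorem, not an available intermediate step. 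The paper's route is different and avoids this entirely: from $h_\Lambda(u_g) \geq \delta$ one gets $\sum_s |\tau(u_g v_s^*)|^4 \geq \delta^4$, hence (by averaging over the closed convex hull) a non-zero $X \in M \ovt M \ovt M$ with $(\de(u_g) \ot u_g) X = X (u_g \ot \de(u_g))$ for all $g$; the icc assumptions (weak mixing of $\Ad u_g$ and triviality of $\de(M)' \cap M \ovt M$) make $X$ a multiple of a unitary; then the weak-mixing argument of Lemma \ref{lem.intertwine} yields a unitary $Z$ and a character $\gamma$ with $\gamma(g)\de(u_g) Z = Z(u_g \ot u_g)$, whose symmetry and $2$-cocycle relations feed into Theorem \ref{thm.triv-2-coc} to give $Z = \de(w^*)(w \ot w)$, whence $w u_g w^* = \gamma(g) v_{\delta(g)}$, and $\delta$ is onto because $\rL\Gamma = M$. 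So your final appeal to Theorem \ref{thm.triv-2-coc} is the right ingredient, but the path you propose to reach a well-defined $\sigma$ is exactly the step that breaks.

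The converse implication is also not set up correctly as written. Your chosen approximant $w u_g w^*$ need not be close to $u_g$ at all: already when $w = v_t \in \T\Lambda$ and $t$ does not commute with $\delta(g)$, one has $u_g = \gamma(g) v_{t^{-1}\delta(g)t} \in \T\Lambda$ (so the true distance is $0$) while $\|u_g - w u_g w^*\|_2 = \sqrt{2}$; similarly, in the example above with $\tau(w) = 0$ one computes $\Re\tau(v_b^* w^* v_b w) = |\tau(w)|^2 = 0$. So the uniform positivity of $\Re\tau(v_{\delta(g)}^* w^* v_{\delta(g)} w)$ is simply false for many admissible $w$, and the whole burden falls on the normalization step, for which you give no argument. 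The paper's proof of this direction is short and you should use it: by the identity $\dist(x,\T\Lambda) = \sqrt{1 + \|x\|_2^2 - 2 h_\Lambda(x)}$, failure of (i) yields $g_n$ with $h_\Lambda(u_{g_n}) \recht 0$; one then checks $h_\Lambda(a u_{g_n} b) \recht 0$ for all fixed $a,b \in M$ (verify it for finite linear combinations of the $v_s$ and approximate in $\|\cdot\|_2$ with operator-norm control), which contradicts $h_\Lambda(w u_{g_n} w^*) = 1$, the latter holding under (ii).
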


Defining the \emph{height} of an element $x \in \rL \Lambda$ as
\begin{equation}\label{eq.defheight}
h_\Lambda(x) := \max \{ |\tau(x v_s^*)| \mid s \in \Lambda \} \; ,
\end{equation}
it is an easy exercise to check that
$$\dist(x,\T\Lambda) = \sqrt{1 + \|x\|_2^2 - 2 h_\Lambda(x)} \; .$$
In particular, the assumption $\dist(\T \Gamma, \T \Lambda) < \sqrt{2}$ in Theorem \ref{thm.isogroupvnalg} is equivalent with the existence of a $\delta > 0$ such that $h_\Lambda(u_g) \geq \delta$ for all $g \in \Gamma$.

\begin{remark}
Assume that $\Gamma$ and $\Lambda$ are countable groups and that $\rL \Gamma$ is a von Neumann subalgebra of $\rL \Lambda$. Assume that $\dist(\T \Gamma, \T \Lambda) < \sqrt{2}$. We do not know whether it is still true that there exists a unitary $w \in \rL \Lambda$, a character $\gamma : \Gamma \recht \T$ and an injective group homomorphism $\delta : \Gamma \recht \Lambda$ such that $wu_g w^* = \gamma(g) v_{\delta(g)}$ for all $g \in \Gamma$.
\end{remark}

We will not be able to prove our main Theorem \ref{thm.main} by a direct application of Theorem \ref{thm.isogroupvnalg}. We rather need the following \emph{vanishing of cohomology theorem} which at the same time will lead to a proof of Theorem \ref{thm.isogroupvnalg}.

Recall that every group von Neumann algebra $\rL \Lambda$ is equipped with a natural normal unital $*$-homomorphism, called \emph{comultiplication}, $\de : \rL \Lambda \recht \rL \Lambda \ovt \rL \Lambda$ given by $\de(v_s) = v_s \ot v_s$ for all $s \in \Lambda$. Observe that $(\de \ot \id)\de = (\id \ot \de)\de$ and that $\si \circ \de = \de$, where $\si(x \ot y) = y \ot x$ is the flip automorphism.
We also use the \emph{tensor leg numbering notation} for operators in tensor products. In this manner, $X_{21} = \si(X)$, $X_{23} = 1 \ot X$, $X_{13} = (\si \ot \id)(1 \ot X)$, etc.

\begin{theorem} \label{thm.triv-2-coc}
Let $\Lambda$ be a countable group and $\de : \rL \Lambda \recht \rL \Lambda \ovt \rL \Lambda$ the comultiplication. Suppose that $\Om \in \rL \Lambda \ovt \rL \Lambda$ is a unitary satisfying
$$\Omega_{21} = \mu \Omega \quad\text{and}\quad (\de \ot \id)(\Om) (\Om \ot 1) = \eta (\id \ot \de)(\Om) (1 \ot \Om)$$
for some $\mu,\eta \in \T$. Then, $\mu = \eta = 1$ and there exists a unitary $w \in \rL \Lambda$ such that
$$\Omega = \de(w^*) (w \ot w) \; .$$
\end{theorem}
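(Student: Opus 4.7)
Write $\Omega = \sum_{s,t \in \Lambda} F(s,t)\, v_s \otimes v_t$. The plan is (1) to deduce that $\mu\eta = 1$ and hence $\mu = \eta \in \{\pm 1\}$, (2) to exclude the case $\mu = \eta = -1$, and (3) to construct the required unitary $w$. For (1), the symmetry $\Omega_{21} = \mu \Omega$ means $F(t,s) = \mu F(s,t)$, applied twice giving $\mu^2 = 1$. Apply the leg-flip $\sigma_{12}$ to the cocycle identity $A = \eta B$, where $A := (\Delta \otimes \id)\Omega \cdot (\Omega \otimes 1)$ and $B := (\id \otimes \Delta)\Omega \cdot (1 \otimes \Omega)$. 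Since $(\Delta \otimes \id)\Omega$ is $\sigma_{12}$-invariant (legs $1,2$ both carry $v_s$), $\sigma_{12}(\Omega \otimes 1) = \mu(\Omega \otimes 1)$, $\sigma_{12}(1 \otimes \Omega) = \Omega_{13}$, and a short index computation yields $\sigma_{12}((\id \otimes \Delta)\Omega) = \mu\, \sigma_{23}((\Delta \otimes \id)\Omega)$, one obtains the companion identity $A = \eta\, \sigma_{23}((\Delta \otimes \id)\Omega) \cdot \Omega_{13}$. Comparing Fourier coefficients at $v_g \otimes v_h \otimes v_k$ with the original cocycle shows that the coefficients $B(g,h,k) := \sum_t F(g,t) F(t^{-1}h, t^{-1}k)$ satisfy $B(g,h,k) = \eta\, B(g,k,h)$; combined with the direct relation $B(g,k,h) = \mu\, B(g,h,k)$ coming from symmetry of $F$, this gives $B(g,h,k) = \mu\eta\, B(g,h,k)$. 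Since $B$ is a unitary, some coefficient is nonzero, and hence $\mu\eta = 1$.

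For (2), suppose $\mu = \eta = -1$. Then antisymmetry of $F$ forces $F(s,s) = 0$ for all $s$, so taking $g = h$ in the Fourier form of the cocycle identity makes the left side vanish term-wise (each summand contains the factor $F(s^{-1}g, s^{-1}g) = 0$), forcing $\sum_t F(g,t) F(t^{-1}g, t^{-1}k) = 0$ for all $g, k$. Equivalently, the conditional expectation $E_{\Delta(\rL \Lambda)} \otimes \id$ onto $\Delta(\rL \Lambda) \ovt \rL \Lambda$ annihilates the left side of the cocycle identity by the bimodule property together with $E_{\Delta(\rL \Lambda)}(\Omega) = 0$. Passing to the sheared function $f(s,u) := F(s, su)$, a unit vector in $\ell^2(\Lambda \times \Lambda)$, this identity translates into the matrix equation $f J f = 0$, where $(Jg)(v,w) := g(v^{-1}, w)$. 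Combined with the transformed antisymmetry $f(t,a) + f(ta, a^{-1}) = 0$ and with the unitarity relations $\sum_{s,u} \overline{f(s,u)}\, f(s, u\beta) = \delta_{\beta, e}$ derived from $\Omega^* \Omega = 1$, a careful orthogonality argument contradicts $\|f\|_2 = 1$ (the finite case $\Lambda = \Z/2\Z$ is immediate: $fJf = f^2 = 0$ is incompatible with $\|f\|_2 = 1$). Hence $\mu = \eta = 1$.

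For (3), with $\mu = \eta = 1$ the map $\Delta_\Omega(x) := \Omega \Delta(x) \Omega^*$ is a coassociative, cocommutative, trace-preserving $*$-homomorphism, endowing $\rL \Lambda$ with a second tracial cocommutative Hopf von Neumann algebra structure. The group of $\Delta_\Omega$-group-like unitaries, matched with the original $\{v_s\}_{s \in \Lambda}$, produces a unitary $w \in \rL \Lambda$ such that $\Omega^*(w \otimes w) \in \Delta(\rL \Lambda)$. Writing $\Omega^*(w \otimes w) = \Delta(u)$ for a unitary $u$ and using the cocycle identity together with a counit-like slice to force $u = w$ yields $\Omega = \Delta(w^*)(w \otimes w)$ as required. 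The main obstacle is step (2), the exclusion of the alternating case: extracting a contradiction from $fJf = 0$ together with the antisymmetry and unitarity relations for an infinite discrete group requires a delicate interplay of the cocycle-derived and unitarity-derived orthogonality relations on $f$. Step (3) invokes comparatively standard cocommutative Hopf theory adapted to the tracial von Neumann algebra setting.
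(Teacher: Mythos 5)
There is a genuine gap, and it sits exactly where you flag it. Your Step (2) is not a proof: after reducing the case $\mu=\eta=-1$ to the relations $fJf=0$, the sheared antisymmetry, and the unitarity identities, you assert that ``a careful orthogonality argument'' contradicts $\|f\|_2=1$, but no such argument is given (the $\Z/2\Z$ check is not evidence for the infinite case, which is the only relevant one). Since $\mu=\pm1$ and $\mu\eta=1$ are the easy parts, the exclusion of the alternating case is a substantial portion of the theorem, and it is telling that in the paper it costs essentially one line once the right framework is in place: writing $W\in\ell^\infty(\Lambda)\ovt\rL\Lambda$ for the unitary implementing $\de(x)=W^*(x\ot1)W$ and $X=W\Om$, the symmetry $\Om_{21}=\mu\Om$ turns into $X_{12}X_{13}=\mu X_{13}X_{12}$, hence $ab=\mu ba$ for all $a,b$ in the von Neumann algebra generated by the slices $(\id\ot\om)(X)$; since that algebra is unital, taking $b=1$ forces $\mu=1$ (and simultaneously shows the algebra is abelian). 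Your Fourier-coefficient route has no access to such a unit, which is why you are stuck.

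Step (3) has a second gap of the same nature: you invoke ``standard cocommutative Hopf theory'' to produce group-like unitaries for $\Delta_\Om(x)=\Om\de(x)\Om^*$ and to match them with the $v_s$, but it is not automatic that the twisted structure is a Kac algebra (e.g.\ invariance of the trace under $\Delta_\Om$ is unclear), nor that it has enough group-likes, nor that group-likes yield a single unitary $w$ with the exact coboundary formula $\Om=\de(w^*)(w\ot w)$; this is precisely the content of the theorem, not a citation. The paper's substitute is concrete: it shows the norm-closed span $A$ of the slices of $X$ is an abelian C$^*$-algebra normalized by the left translations $\lambda_g$, that $A''$ together with the $\lambda_g$ is ultraweakly dense in $\B(\ell^2\Lambda)$, builds a normal conditional expectation $E(x)=(\id\ot\tau)(X(x\ot1)X^*)$ of $\B(\ell^2\Lambda)$ onto $A''$ (so $A''$ is atomic), and then reads off $w$ from $X(p\ot1)=\overline{\eta}\,p\ot w$ for a minimal projection $p$, after which $\eta=1$ drops out. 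Your Step (1) is fine in substance ($\mu\eta=1$ does follow from leg flips, though the stated coefficient identity $B(g,h,k)=\eta B(g,k,h)$ should be $B(g,h,k)=\mu B(h,g,k)$), but as written Steps (2) and (3) replace the two essential arguments of the proof with unproven claims.
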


\begin{proof}
Put $M = \rL \Lambda$ and $H = \ell^2(\Lambda)$. Define the unitary operators $\lambda_h, \rho_h$, $h \in \Lambda$ by the formulae $\lambda_h \delta_k = \delta_{hk}$ and $\rho_h \delta_k = \delta_{kh^{-1}}$. Realize $M := \{\rho_h \mid h \in \Lambda\}\dpr$.

We view $\ell^\infty(\Lambda)$ acting on $H$ by multiplication operators. We define the unitary
$$W \in \ell^\infty(\Lambda) \ovt M \quad\text{given by}\quad W(\delta_g \ot \delta_h) = \delta_g \ot \rho_g \delta_h = \delta_g \ot \delta_{hg^{-1}} \; .$$
Define the unitary
$$X \in \B(H) \ovt M : X = W \Omega \; .$$
It is easy to check that $\de(x) = W^*(x \ot 1) W$ for all $x \in M$. Also,
\begin{equation}\label{eq.formula}
(\id \ot \de)(X) (1 \ot \Om) = \overline{\eta} X_{13} X_{12} \; .
\end{equation}

Whenever $\cV \subset \B(H)$, we denote by $[\cV]$ the norm closed linear span of $\cV$ inside $\B(H)$. Define
$$A := [ (\id \ot \om)(X) \mid \om \in M_* ] \; .$$

{\bf Step 1.} The norm closed linear subspace $A \subset \B(H)$ is actually a C$^*$-algebra acting non-degenerately on $H$ (i.e.\ [A H] = H). Moreover, $\lambda_g A \lambda_g^* = A$ for all $g \in \Lambda$.

Applying $\id \ot \om_1 \ot \om_2$ to \eqref{eq.formula}, we get
\begin{align*}
[A A] &= [(\id \ot \om_1 \ot \om_2)\bigl( (\id \ot \de)(X) (1 \ot \Om) \bigr) \mid \om_1,\om_2 \in M_* ] \\ &= [(\id \ot \Om \om)(\id \ot \de)(X) \mid \om \in (M \ovt M)_* ] = [(\id \ot \om\de)(X) \mid \om \in (M \ovt M)_* ] = A \; .
\end{align*}

Since $\de(x) = W^* (x \ot 1) W$, we can rewrite \eqref{eq.formula} in the form
\begin{equation}\label{eq.alternative}
\overline{\eta} X_{12} X_{23}^* = X^*_{13} W^*_{23} X_{12} \; .
\end{equation}
Applying $\id \ot \om_1 \ot \om_2$, $\om_1,\om_2 \in \B(H)_*$, we get
$$A = [(\id \ot \om_1 \ot \om_2)(X^*_{13} W^*_{23} X_{12}) \mid \om_1,\om_2 \in \B(H)_*] \; .$$
Denote by $P_g \in \ell^\infty(\Lambda)$ the natural minimal projections. Then $\B(H)_* = [\om P_g \mid \om \in \B(H)_* , g \in \Lambda]$. Hence,
\begin{align*}
A &= [(\id \ot \om_1 P_g \ot \om_2)(X^*_{13} W^*_{23} X_{12}) \mid \om_1,\om_2 \in \B(H)_*, g \in \Lambda] \\
&=[(\id \ot \om_1 \ot \om_2)(X^*_{13} (1 \ot P_g \ot 1)W^*_{23} X_{12}) \mid \om_1,\om_2 \in \B(H)_*, g \in \Lambda] \; .
\end{align*}
Since $(P_g \ot 1)W^* = P_g \ot \rho_g^*$, we get
\begin{align*}
A &= [(\id \ot \om_1 P_g \ot \rho_g^* \om_2)(X^*_{13} X_{12}) \mid \om_1,\om_2 \in \B(H)_*, g \in \Lambda] \\
&= [(\id \ot \om_1 \ot \om_2)(X^*_{13} X_{12}) \mid \om_1,\om_2 \in \B(H)_*]  = [A^* A] \; .
\end{align*}
Since $A = [A A]$ and $A = [A^* A]$, it follows that $A$ is a C$^*$-algebra. Also,
$$[A H] = [(\id \ot \om)(X)H \mid \om \in M_*] = [(1 \ot \xi_1^*) X (H \ot \xi_2) \mid \xi_1,\xi_2 \in H] = H$$
since $X$ is a unitary operator. So, the C$^*$-algebra $A$ acts non-degenerately on $H$.

Since $X(\lambda_g \ot 1) X^* = \lambda_g \ot \rho_g$, also
$$\lambda_g^* (\id \ot \om)(X) \lambda_g = (\id \ot \om\rho_g)(X) \; .$$
Hence, $\lambda_g$ normalizes $A$.

{\bf Step 2.} We have $\mu = 1$ and $A$ is an abelian C$^*$-algebra.

Applying $\id \ot \sigma$ to \eqref{eq.formula} and using the fact that $\de = \sigma \circ \de$, one gets $X_{12} X_{13} = \mu X_{13} X_{12}$.
Applying $\id \ot \om_1 \ot \om_2$ to this formula, we get that $ab = \mu b a$ for all $a,b \in A$. So, this formula also holds when $a$ and $b$ belong to $A\dpr$, which contains $1$. But then, $\mu = 1$ and $A$ follows abelian.

{\bf Step 3.} The closed linear span $B:=[A \lambda_g \mid g \in \Lambda]$ is a C$^*$-algebra that is ultraweakly dense in $\B(H)$.

Since the unitaries $\lambda_g$ normalize $A$, it follows that $B$ is a C$^*$-algebra. Also, $A \subset B$ and hence, $B$ acts non-degenerately on $H$. It suffices to prove that $B' = \C 1$. Since the commutant of $\{\lambda_g \mid g \in \Lambda\}$ equals $M$, we have to prove that $M \cap A' = \C 1$. Take $x \in M \cap A'$. Denote $\cA = A\dpr$ and note that $X \in \cA \ovt M$. Since $\cA$ is abelian, we have $X_{12} X_{13} = X_{13} X_{12}$. Combining with \eqref{eq.alternative}, we have
$$W^*_{23} X_{12} X_{23} = \overline{\eta} X_{12} X_{13} \; .$$
Hence,
$$W^*_{23} X_{12} X_{23} (1 \ot x \ot 1) X_{23}^* X_{12}^* W_{23} = X_{12} X_{13} (1 \ot x \ot 1) X_{13}^* X_{12}^* \; .$$
Since $x \in \cA'$, the left hand side equals $(\id \ot \de)(X (1 \ot x)X^*)$, while the right hand side equals $X(1 \ot x)X^* \ot 1$. Denote by $\tau$ the natural trace on $M = \rL \Lambda$. Then, $(\id \ot \tau)\de(y) = \tau(y) 1$ for all $y \in M$. Applying $\id \ot \id \ot \tau$ to the equality $(\id \ot \de)(X (1 \ot x)X^*) = X(1 \ot x)X^* \ot 1$, we find $y \in \cA$ such that $X (1 \ot x) X^* = y \ot 1$. But then,
$$1 \ot x = X^* (y \ot 1) X = y \ot 1 \; .$$
We finally conclude that $x$ is a scalar multiple of $1$.

{\bf Step 4.} The formula $E(x) = (\id \ot \tau)(X (x \ot 1) X^*)$ provides a normal conditional expectation of $\B(H)$ onto $\cA$.

Since $\cA$ is abelian, we have $E(x) = x$ for all $x \in \cA$. So, it remains to prove that $E(x) \in \cA$ for all $x \in \B(H)$. By step 3 it suffices to check this for $x = a \lambda_g$, $a \in A$, $g \in \Lambda$. Since $a \ot 1$ and $X$ commute, we have
$$E(a \lambda_g) = a (\id \ot \tau)(X(\lambda_g \ot 1) X^*) = a (\id \ot \tau)(\lambda_g \ot \rho_g) = \begin{cases} a &\;\;\text{if $g = e$,}\\
0 &\;\;\text{if $g \neq e$.}\end{cases}$$

{\bf End of the proof.} Step 4 implies that $\cA$ is a discrete von Neumann algebra. Let $p \in \cA$ be a non-zero minimal projection. Since $\cA$ is abelian, define the unitary $w \in M$ such that $X(p \ot 1) = \eta p \ot w$. Multiplying \eqref{eq.formula} with $p \ot 1 \ot 1$, we get that $\de(w) \Omega = w \ot w$. So, $\Om = \de(w^*) (w \ot w)$. Then also $(\de \ot \id)(\Om)(\Om \ot 1) = (\id \ot \de)(\Om)(1 \ot \Om)$, implying that $\eta = 1$.
\end{proof}

Before proving Theorem \ref{thm.isogroupvnalg}, we state and prove the following lemma which has some interest of its own.

Recall that a unitary representation of a countable group is called \emph{weakly mixing} if $\{0\}$ is the only finite dimensional invariant subspace.

\begin{lemma} \label{lem.intertwine}
Let $\Gamma,\Lambda$ be countable groups and assume that $\rL \Gamma \subset \rL \Lambda$. Denote by $(u_g)_{g \in \Gamma}$ the canonical unitaries in $\rL \Gamma$. Denote $M = \rL \Lambda$ and let $(v_s)_{s \in \Lambda}$ be the canonical unitaries in $\Lambda$. Let $\de : \rL \Lambda \recht \rL \Lambda \ovt \rL \Lambda$ be the comultiplication. Assume that the unitary representation $\Ad u_g$ of $\Gamma$ on $\rL^2(M) \ominus \C 1$ is weakly mixing.

If $\Omega \in M \ovt M$ is a unitary satisfying $\Omega (u_g \ot u_g) \Omega^* \in \de(M)$ for all $g \in \Gamma$, there exist unitaries $w,v \in M$, a character $\gamma : \Gamma \recht \T$ and an injective group homomorphism $\rho : \Gamma \recht \Lambda$ such that
$$w u_g w^* = \gamma(g) v_{\rho(g)} \;\;\text{for all}\;\; g \in \Gamma \quad\text{and}\quad \Omega = \de(v^*)(w \ot w) \; .$$
\end{lemma}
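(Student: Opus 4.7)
My plan is to extract from $\Om$ a natural group homomorphism $\pi : \Gamma \recht \cU(M)$, show that $\pi$ takes values in $\T \Lambda$, and then read off the factorization and the conjugation formula.

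Since $\si \circ \de = \de$ and $\de$ is injective, the hypothesis $\Om(u_g \ot u_g)\Om^* \in \de(M)$ lets me define $\pi(g) := \de^{-1}(\Om(u_g \ot u_g)\Om^*)$, and the identity $(u_g u_h \ot u_g u_h) = (u_g \ot u_g)(u_h \ot u_h)$ shows that $\pi : \Gamma \recht \cU(M)$ is a group homomorphism. As a first rigidity input, the weak mixing of $\Ad u_g$ on $\rL^2(M) \ominus \C 1$ tensors up (the tensor product of any representation with a weakly mixing one is weakly mixing) to weak mixing of $\Ad u_g \ot \Ad u_g$ on $\rL^2(M \ovt M) \ominus \C 1$, so $\{u_g \ot u_g : g \in \Gamma\}' \cap (M \ovt M) = \C 1$. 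Since $\de(\pi(g))$ is $\si$-invariant, $\si(\Om)^* \Om$ commutes with each $u_g \ot u_g$, so $\Om_{21} = \bar\mu\Om$ for some $\mu \in \T$; iterating $\si$ then forces $\mu = \pm 1$.

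The heart of the proof will be to show $\pi(g) \in \T \Lambda$ for every $g$. Writing $\pi(g) = \sum_{s \in \Lambda} c^g_s v_s$ with $c^g_s = \tau(\pi(g) v_s^*)$, the defining equation rewrites as
$$u_g \ot u_g \;=\; \sum_{s \in \Lambda} c^g_s \, \Om^*(v_s \ot v_s)\Om \; .$$
The family $\{\Om^*(v_s \ot v_s)\Om : s \in \Lambda\}$ is a subgroup of $\cU(M \ovt M)$ canonically isomorphic to $\Lambda$, generating $N := \Om^* \de(M) \Om \cong \rL\Lambda$, and under this isomorphism $u_g \ot u_g \in N$ corresponds exactly to $\pi(g) \in M$. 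So $\pi(g) \in \T\Lambda$ is equivalent to $u_g \ot u_g$ having height $1$ in the canonical group of $N$. I expect this to be the main obstacle. My plan is to apply Popa's intertwining-by-bimodules (Definition \ref{def.intertwine}) to the inclusion $\{u_g \ot u_g\}'' \subset N$, using the weak mixing of $\Ad u_g \ot \Ad u_g$ (which, beyond trivializing relative commutants, excludes all finite-dimensional invariant subspaces and thereby rigidifies the spectral decomposition of $\pi(g)$) to rule out Fourier mass of $\pi(g)$ away from a single element $\rho(g) \in \Lambda$. The outcome is a group homomorphism $\rho : \Gamma \recht \Lambda$ and a character $\beta : \Gamma \recht \T$ with $\pi(g) = \beta(g) v_{\rho(g)}$; $\rho$ is injective because $\pi(g) = 1$ forces $u_g \ot u_g = 1$, hence $u_g \in \T$, hence $g = e$ (from $\tau(u_g) = \delta_{g,e}$).

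Once $\pi(g) = \beta(g) v_{\rho(g)}$ is in hand, the identity $\Om (u_g \ot u_g)\Om^* = \beta(g)(v_{\rho(g)} \ot v_{\rho(g)})$ is rigid enough to produce $v, w \in \cU(M)$ with $\Om = \de(v^*)(w \ot w)$. My plan is to set $\Om' := \de(v)\Om$ for a suitably chosen $v$, to note that $\Om'(u_g \ot u_g){\Om'}^* = \beta(g)(v_{\rho(g)} \ot v_{\rho(g)})$, and then to argue as in the endgame of the proof of Theorem \ref{thm.triv-2-coc} --- exploiting the $\pm 1$-symmetry from the first step together with weak mixing --- to force $\Om'$ to be a pure tensor $w \ot w$. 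Reading off $w u_g w^* = \gamma(g) v_{\rho(g)}$ yields a character $\gamma : \Gamma \recht \T$ with $\gamma^2 = \beta$, automatically obtained from the group-homomorphism property of $g \mapsto w u_g w^*$, and gives the factorization $\Om = \de(v^*)(w \ot w)$ claimed in the lemma.
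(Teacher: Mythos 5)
Your reduction to the claim ``$\pi(g) \in \T\Lambda$ for every $g$'' is the fatal step: that claim is false under the hypotheses of the lemma, so no amount of intertwining technology will prove it. Indeed, take $\Lambda = \Gamma$ icc with $u_g = v_g$ (so the weak mixing hypothesis holds), pick any unitary $v \in M$ with $v^* v_g v \notin \T\Lambda$ for some $g$, and set $\Om := \de(v^*)$. Then $\Om \de(u_g) \Om^* = \de(v^* v_g v) \in \de(M)$, so the hypothesis is satisfied, but $\pi(g) = v^* v_g v$ has small height and is not in $\T\Lambda$. This is consistent with the lemma's conclusion: from $\Om = \de(v^*)(w \ot w)$ and $w u_g w^* = \gamma(g) v_{\rho(g)}$ one only gets $\pi(g) = \gamma(g)\, v^* v_{\rho(g)} v$, i.e.\ $\pi$ is conjugate to a $\T\Lambda$-valued homomorphism by the single unitary $v$ coming from the coboundary decomposition of $\Om$ --- it does not itself take values in $\T\Lambda$. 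So the order of your argument cannot work: the concentration of Fourier mass can only be established \emph{after} (indeed, as a consequence of) producing the decomposition $\Om = \de(v^*)(w\ot w)$, not as a preliminary step. Moreover, even where some concentration statement is true, applying Definition \ref{def.intertwine} to $\{u_g \ot u_g\}'' \subset \Om^*\de(M)\Om$ only yields an intertwining of a corner into a subalgebra, never the height-one/unitary-conjugacy statement you need; the gap between an embedding and conjugacy into $\T\Lambda$ is precisely the content of Theorem \ref{thm.isogroupvnalg}, whose proof itself rests on the machinery you are omitting.

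The missing idea is co-associativity plus the vanishing theorem for symmetric dual $2$-cocycles. The paper's proof forms $X = (\de\ot\id)(\Om^*)(\id\ot\de)(\Om)$, which satisfies $(\de(u_g)\ot u_g)X = X(u_g\ot\de(u_g))$; taking $(X\ot 1)(1\ot X)$ shows that the representation $\xi \mapsto (u_g\ot u_g)\,\xi\,\de(u_g)^*$ is not weakly mixing, and weak mixing of $\Ad(u_g\ot u_g)$ (the part of your sketch that is correct) reduces the resulting finite-dimensional intertwiner to a character $\gamma$ and a unitary $Z$ with $\gamma(g)\de(u_g)Z = Z(u_g\ot u_g)$. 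One then checks that $Z$ satisfies the symmetry and $2$-cocycle relations up to scalars and invokes Theorem \ref{thm.triv-2-coc} to write $Z = \de(w^*)(w\ot w)$, which is exactly what produces $w u_g w^* = \gamma(g)v_{\rho(g)}$; finally $\Om(w^*\ot w^*) \in \de(M)$ is obtained by a separate argument using the relative icc property of $\rho(\Gamma)$ in $\Lambda$ and asymptotic multiplicativity of $E_{\de(M)}$ along $v_{s_n}\ot v_{s_n}$. Your sketch supplies none of these ingredients, and its ``endgame'' (forcing $\de(v)\Om$ to be a pure tensor, extracting a square root $\gamma$ of $\beta$) presupposes the false height statement, so the proposal has a genuine gap rather than a repairable rough edge.
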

\begin{proof}
Define $\pi : \Gamma \recht \cU(M)$ such that $\de(\pi(g)) \Omega = \Omega(u_g \ot u_g)$ for all $g \in \Gamma$. Write $X = (\de \ot \id)(\Om^*)(\id \ot \de)(\Om)$. Then, $X \in M \ovt M \ovt M$ is unitary and satisfies
\begin{equation}\label{eq.eqX}
(\de(u_g) \ot u_g) X = X (u_g \ot \de(u_g))
\end{equation}
for all $g \in \Gamma$. Define $Y = (X \ot 1) (1 \ot X)$, which is a unitary in $M \ovt M \ovt M \ovt M$ satisfying
$$(\de(u_g) \ot u_g \ot u_g) Y = Y (u_g \ot u_g \ot \de(u_g))$$
for all $g \in \Gamma$. It follows that the unitary representation $\xi \mapsto (u_g \ot u_g) \xi \de(u_g)^*$ of $\Gamma$ on $\rL^2(M \ovt M)$ is not weakly mixing. This yields a finite dimensional unitary representation $\eta : \Gamma \recht \cU(\C^n)$ and a non-zero vector $\xi \in \C^n \ot \rL^2(M \ovt M)$ satisfying
$$(\eta(g) \ot u_g \ot u_g) \xi = \xi \de(u_g)$$
for all $g \in \Gamma$. We may assume that $\eta$ is irreducible. Since $\Ad (u_g \ot u_g)$ is weakly mixing on $\rL^2(M \ovt M) \ominus \C 1$ and since $\eta$ is irreducible, it follows that $\xi \xi^*$ is a multiple of $1$. Hence, $n = 1$ and we have found a unitary $Z \in M \ovt M$ and a character $\gamma : \Gamma \recht \T$ satisfying $\gamma(g) \de(u_g) Z = Z(u_g \ot u_g)$ for all $g \in \Gamma$.

Since $\si \circ \de = \de$, it follows that $Z_{21}^* Z$ commutes with $u_g \ot u_g, g \in \Gamma$ and hence, is a scalar multiple of $1$. Since $(\de \ot \id) \de = (\id \ot \de) \de$, it also follows that $(1 \ot Z)^* (\id \ot \de)(Z)^*(\de \ot \id)(Z)(Z \ot 1)$ commutes with $u_g \ot u_g \ot u_g$ for all $g \in \Gamma$ and hence, is a scalar multiple of $1$. By Theorem \ref{thm.triv-2-coc}, we find a unitary $w \in M$ such that $Z = \de(w^*) (w \ot w)$.

It follows that $\gamma(g) \de(w u_g w^*) = w u_g w^* \ot w u_g w^*$ for all $g \in \Gamma$. This means that $w u_g w^* = \gamma(g) v_{\rho(g)}$ for an injective group homomorphism $\rho : \Gamma \recht \Lambda$ (see Lemma \ref{lem.elemcomult} below for this well known fact). Put $\Lambda_0 = \rho(\Gamma)$. Since $\Ad u_g$ is a weakly mixing representation of $\Gamma$ on $\rL^2(M)$, also $(\Ad v_s)_{s \in \Lambda_0}$ is weakly mixing, meaning that $\Lambda_0 \subset \Lambda$ has the relative icc property: $\{s t s^{-1} \mid s \in \Lambda_0 \}$ is infinite for all $t \in \Lambda - \{e\}$.

Since $\Om (u_g \ot u_g) \Om^* \in \de(M)$ for all $g \in \Gamma$, it follows that
\begin{equation}\label{eq.omom}
\Om (w^* \ot w^*) \; (v_s \ot v_s) \; (w \ot w)\Om^* \in \de(M)
\end{equation}
for all $s \in \Lambda_0$. Since $\Lambda_0 \subset \Lambda$ has the relative icc property, we can take a sequence $s_n \in \Lambda_0$ such that $s_n t s_n^{-1} \recht \infty$ for all $t \in \Lambda - \{e\}$. It follows that
$$\|E_{\de(M)}(a (v_{s_n} \ot v_{s_n}) b) - E_{\de(M)}(a) \, \de(v_{s_n}) \, E_{\de(M)}(b) \|_2 \recht 0$$
for all $a,b \in M \ovt M$. Indeed, it suffices to check this for $a$ and $b$ of the form $v_r \ot v_t$, $r,t \in \Lambda$. Together with \eqref{eq.omom}, it follows that $\|E_{\de(M)}(\Om (w^* \ot w^*))\|_2 = 1$, meaning that $\Om(w^* \ot w^*) \in \de(M)$. We have found the required unitary $v \in M$ such that $\Om = \de(v^*)(w \ot w)$.
\end{proof}

We are now ready to prove Theorem \ref{thm.isogroupvnalg}.

\begin{proof}[Proof of Theorem \ref{thm.isogroupvnalg}]
Denote by $\de : \rL \Lambda \recht \rL \Lambda \ovt \rL \Lambda$ the canonical comultiplication. Put $\rL \Gamma = M = \rL \Lambda$ and denote by $\tau$ the trace on $M$. Whenever $x \in M$, we denote by $x^s$, $s \in \Lambda$ the Fourier coefficient $x^s := \tau(x v_s^*)$. As above we define for all $x \in M$ the height $h_\Lambda(x) = \max \{ |(x)^s| \mid s \in \Lambda \}$.

First assume that $\dist(\T \Gamma,\T \Lambda) < \sqrt{2}$.
By the discussion after the formulation of Theorem \ref{thm.isogroupvnalg}, we find a $\delta > 0$ such that $h_\Lambda(u_g) \geq \delta$ for all $g \in \Gamma$. A straightforward computation then gives
$$(\tau \ot \tau)( (\de(u_g) \ot u_g) (u_g \ot \de(u_g))^*) = \sum_{s \in \Lambda} |(u_g)^s|^4 \geq \delta^4$$
for all $g \in \Gamma$. So, there exists a non-zero $X \in M \ovt M \ovt M$ satisfying
$$(\de(u_g) \ot u_g) X = X (u_g \ot \de(u_g))$$
for all $g \in \Gamma$.

We also have $M = \rL \Gamma$. So, $\Gamma$ is an icc group and $(\Ad u_g)_{g \in \Gamma}$ is a weakly mixing representation of $\Gamma$ on $\rL^2(M) \ominus \C 1$. Since $XX^*$ commutes with all $\de(u_g) \ot u_g$, $g \in \Gamma$, it follows that $XX^* \in ( \de(M)' \cap M \ovt M) \ot 1$. Since $\Lambda$ is an icc group, $\de(M)$ has trivial relative commutant in $M \ovt M$. Hence, $XX^*$ is a non-zero multiple of $1$ and we may assume that $X$ is a unitary element of $M \ovt M \ovt M$.

We can now start reading the proof of Lemma \ref{lem.intertwine} at formula \eqref{eq.eqX} and find a unitary $w \in M$, a character $\gamma : \Gamma \recht \T$ and an injective group morphism $\delta : \Gamma \recht \Lambda$ such that
$$w u_g w^* = \gamma(g) u_{\delta(g)} \quad\text{for all}\;\; g \in \Gamma \; .$$
But then, $\delta$ follows onto as well.

Conversely assume that $\dist(\T \Gamma,\T \Lambda) = \sqrt{2}$. So we can take a sequence $g_n \in \Gamma$ such that $h_\Lambda(u_{g_n}) \recht 0$. We claim that $h_\Lambda(a u_{g_n} b) \recht 0$ for all $a,b \in M$. The claim is trivial if $a$ and $b$ are finite linear combinations of $v_s, s \in \Lambda$ and follows in general by approximating in $\| \, \cdot \,\|_2$ arbitrary $a,b \in M$ by such finite linear combinations $a_0,b_0$ satisfying $\|a_0\| \leq \|a\|$ and $\|b_0\| \leq \|b\|$. If $w \in M$ would be a unitary satisfying $w u_g w^* \in \T \Lambda$ for all $g \in \Gamma$, we arrive at the contradiction that
$1 = h_\Lambda(w u_{g_n} w^*) \recht 0$.
\end{proof}

Out of Connes' rigidity paper \cite{Co80a} grew a series of rigidity
results ``up to countable classes'' (see e.g. \cite[Theorem
5.3(2)]{Po01a},\cite[Theorem 4.4]{Po01b}, \cite[Theorem 2]{Oz02},
etc). In particular, it was pointed out in \cite[Section 4]{Po06b}
that Connes' rigidity conjecture (\cite{Co80b}) does hold true up to
countable classes. More precisely, given an icc property (T) group
$\Gamma$, there are at most countably many non-isomorphic groups
$\Lambda_i$ satisfying $\rL \Gamma \cong \rL \Lambda_i$. Besides ``separability arguments'', the proof in \cite{Po06b} makes crucial
use of a result in \cite[Theorem, p.\ 5]{Sh00}, which shows that
every property (T) group is the quotient of a finitely presented
property (T) group and thus allows to assume (when arguing by
contradiction) that all $\Lambda_i$ are a quotient of one and the
same property (T) group. As a corollary of Theorem
\ref{thm.triv-2-coc} we can give an alternative proof, not relying
on Shalom's theorem.

\begin{proposition}\label{prop.Connes-up-to-countable}
Let $\Gamma$ be an icc property (T) group. There are at most countably many non-isomorphic groups $\Lambda_i$ satisfying $\rL \Gamma \cong \rL \Lambda_i$.
\end{proposition}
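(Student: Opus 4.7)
To every group $\Lambda$ with $\rL \Lambda \cong M := \rL \Gamma$, associate a comultiplication $\Delta_\Lambda : M \to M \ovt M$ by fixing an isomorphism $\pi_\Lambda : \rL \Lambda \to M$ and setting $\Delta_\Lambda(\pi_\Lambda(v_s)) := \pi_\Lambda(v_s) \ot \pi_\Lambda(v_s)$ for $s \in \Lambda$. This is a trace-preserving normal unital $*$-homomorphism lying in the Polish space $\mathcal{D}$ of all such homomorphisms $M \to M \ovt M$ (equipped with the pointwise $\|\cdot\|_2$-topology), on which $\cU(M \ovt M)$ acts continuously by conjugation. I shall show (i) that $\cU(M \ovt M)$-conjugacy of $\Delta_{\Lambda_1}, \Delta_{\Lambda_2}$ forces $\Lambda_1 \cong \Lambda_2$ as groups, and (ii) that only countably many orbits under this action contain some $\Delta_\Lambda$.

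Step (i) is a direct application of Lemma~\ref{lem.intertwine}. If $\Delta_{\Lambda_2} = \Ad \Om \circ \Delta_{\Lambda_1}$ for a unitary $\Om \in M \ovt M$, and we denote the canonical generators by $v_s^{(i)} := \pi_{\Lambda_i}(v_s)$, then for every $s \in \Lambda_2$ one has
$$\Om^*(v_s^{(2)} \ot v_s^{(2)})\Om = \Delta_{\Lambda_1}(v_s^{(2)}) \in \Delta_{\Lambda_1}(M) \; .$$
Since $M = \rL \Lambda_2$ is a factor, $\Lambda_2$ is icc, which makes the conjugation representation $s \mapsto \Ad v_s^{(2)}$ on $\rL^2(M) \ominus \C 1$ weakly mixing. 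Lemma~\ref{lem.intertwine}, applied with $\Gamma \leftarrow \Lambda_2$, $\Lambda \leftarrow \Lambda_1$, $\de \leftarrow \Delta_{\Lambda_1}$, and $\Om \leftarrow \Om^*$, yields a unitary $w \in M$, a character $\gamma : \Lambda_2 \to \T$, and an injective group homomorphism $\rho : \Lambda_2 \to \Lambda_1$ satisfying $w v_s^{(2)} w^* = \gamma(s) v_{\rho(s)}^{(1)}$. Since $\{v_s^{(2)} : s \in \Lambda_2\}$ generates $M$, so does $\{v_{\rho(s)}^{(1)} : s \in \Lambda_2\}$, which forces $\rho$ to be surjective. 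Hence $\rho$ is an isomorphism $\Lambda_2 \cong \Lambda_1$.

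The main technical work lies in step (ii), which I expect to be the principal obstacle. Assuming for contradiction that uncountably many non-isomorphic $\Lambda_i$ arise, separability of $\mathcal{D}$ yields a subsequence $\Delta_{\Lambda_n} \to \Delta$ with the $\Lambda_n$ pairwise non-isomorphic. For $m$ fixed large, define $\vphi_{n,m} := \Delta_{\Lambda_m}^{-1} \circ E_{\Delta_{\Lambda_m}(M)} \circ \Delta_{\Lambda_n} : M \to M$, a normal unital trace-preserving completely positive map. As $n \to \infty$ (for fixed $m$), $\vphi_{n,m}$ tends pointwise in $\|\cdot\|_2$ to the identity. By \cite[Proposition 5.1]{Po01b}, property (T) of $\Gamma$ transfers to the inclusion $M \subset M$ and hence forces uniform convergence on the unit ball of $M$; a Pythagorean argument using that $\Delta_{\Lambda_n}$ is trace-preserving then upgrades this to $\|\Delta_{\Lambda_n}(x) - \Delta_{\Lambda_m}(x)\|_2 \to 0$ uniformly on the unit ball. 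Viewing $\rL^2(M \ovt M)$ as an $M$-$M$-bimodule via $\Delta_{\Lambda_n}$ on the left and $\Delta_{\Lambda_m}$ on the right, the vector $1$ becomes uniformly almost central; a further application of property (T) of $M$ produces a non-zero central vector. Since $\Lambda_m$ and $\Lambda_n$ are icc, both $\Delta_{\Lambda_m}(M)$ and $\Delta_{\Lambda_n}(M)$ are irreducible in $M \ovt M$, so polar decomposition converts this central vector into a unitary $\Om_{n,m} \in M \ovt M$ intertwining $\Delta_{\Lambda_m}$ and $\Delta_{\Lambda_n}$. Step (i) then yields $\Lambda_n \cong \Lambda_m$, contradicting pairwise non-isomorphism.
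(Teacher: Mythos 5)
Your step (i) is correct: the hypotheses of Lemma \ref{lem.intertwine} are verified exactly as you say ($\Lambda_2$ is icc because $M$ is a factor, and $\Om^*(v_s^{(2)}\ot v_s^{(2)})\Om=\Delta_{\Lambda_1}(v_s^{(2)})\in\Delta_{\Lambda_1}(M)$), and surjectivity of $\rho$ follows since the $v^{(1)}_{\rho(s)}$ generate $M=\rL\Lambda_1$. This is the same mechanism the paper uses, which verifies the symmetry and dual $2$-cocycle relations for $\Omega$ and applies Theorem \ref{thm.triv-2-coc} directly; quoting Lemma \ref{lem.intertwine} is an equivalent packaging.

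In step (ii) there is one statement that is false as written: ``as $n\to\infty$ (for fixed $m$), $\vphi_{n,m}$ tends pointwise in $\|\cdot\|_2$ to the identity.'' Since $\Delta_{\Lambda_n}\recht\Delta$ pointwise, the pointwise limit of $\vphi_{n,m}$ for fixed $m$ is $\Delta_{\Lambda_m}^{-1}\circ E_{\Delta_{\Lambda_m}(M)}\circ\Delta$, and there is no reason why $E_{\Delta_{\Lambda_m}(M)}\circ\Delta$ should equal $\Delta_{\Lambda_m}$; taking ``$m$ large'' only makes $\Delta_{\Lambda_m}(x)$ close to $\Delta(x)$ for finitely many $x$ at a time, which is not the hypothesis of Popa's relative property (T). The repair is routine: $\|\vphi_{n,m}(x)-x\|_2\leq\|E_{\Delta_{\Lambda_m}(M)}(\Delta_{\Lambda_n}(x)-\Delta_{\Lambda_m}(x))\|_2\leq\|\Delta_{\Lambda_n}(x)-\Delta_{\Lambda_m}(x)\|_2\recht 0$ as $n$ \emph{and} $m$ tend to infinity along your subsequence, so you should apply relative property (T) of $M\subset M$ to a diagonal sequence of maps $\vphi_{n,m}$ with $n\neq m$, obtaining two distinct indices with the uniform estimate. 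With that modification the rest goes through: the Pythagorean upgrade is correct, and triviality of the relative commutant of $\Delta_{\Lambda_i}(M)$ (icc) turns the central vector into a multiple of a unitary intertwiner, so step (i) gives the contradiction.

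Beyond this fixable slip, note that your route is essentially the paper's (separability produces two close comultiplications, property (T) produces a unitary $\Omega$ with $\de_j=\Ad\Omega^*\circ\de_i$, and the vanishing of the symmetric dual $2$-cocycle converts this into a group isomorphism), but your use of property (T) is more roundabout than necessary. The paper fixes a Kazhdan pair $(K,\eps)$ for $\Gamma$ and uses separability of $\ell^2(\Gamma\times\Gamma)$ only to find $i\neq j$ with $\|\de_i(u_g)-\de_j(u_g)\|_2\leq\eps$ for the finitely many $g\in K$; then $1\ot 1$ is a $(K,\eps)$-invariant vector for the representation $\pi(g)x=\de_i(u_g)x\de_j(u_g)^*$ and group-level property (T) already yields the intertwiner. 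So the uniform convergence on the whole unit ball (your first application of property (T)), and with it the problematic limit statement, can be bypassed entirely; it also avoids invoking the Connes--Jones bimodule formulation of property (T) for $M$, which the paper does not set up.
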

\begin{proof}
Put $M = \rL \Gamma$ with corresponding canonical unitaries $(u_g)_{g \in \Gamma}$. Assume that $(\Lambda_i)_{i \in I}$ is an uncountable family of groups such that $M = \rL \Lambda_i$. Denote by $(u^i_g)_{g \in \Lambda_i}$ the corresponding canonical unitaries. We need to find $i \neq j$ such that $\Lambda_i \cong \Lambda_j$. Note that all $\Lambda_i$ are icc groups. Denote by $\de_i : M \recht M \ovt M$ the comultiplication that corresponds to the group von Neumann algebra decomposition $M = \rL \Lambda_i$.

Since $\Gamma$ has property (T), take a finite subset $K \subset \Gamma$ and $\eps > 0$ such that every unitary representation of $\Gamma$ that admits a $(K,\eps)$-invariant unit vector, actually admits a non-zero invariant vector. Here, given a unitary representation $\pi : \Gamma \recht \cU(\cH)$, a unit vector $\xi$ is called $(K,\eps)$-invariant if $\|\pi(g)\xi - \xi\|\leq \eps$ for all $g \in K$.

Since the Hilbert space $\rL^2(M \ovt M) = \ell^2(\Gamma \times \Gamma)$ is separable, we can take $i \neq j$ such that $\|\de_i(u_g) - \de_j(u_g)\|_2 \leq \eps$ for all $g \in K$. Define the unitary representation $\pi : \Gamma \recht \cU(\ell^2(\Gamma \times \Gamma))$ given by $\pi(g) x = \de_i(u_g) x \de_j(u_g)^*$ for all $x \in M \ovt M$. By construction, the vector $1 \ot 1$ is $(K,\eps)$ invariant. Hence, $\pi$ admits a non-zero invariant vector $\Omega \in \rL^2(M \ovt M)$. So, $\de_i(a) \Omega = \Omega \de_j(a)$ for all $a \in M$. Since $\Lambda_i$ is an icc group, the relative commutant of $\de_i(\rL \Lambda_i)$ inside $\rL(\Lambda_i \times \Lambda_i)$ equals $\C 1$. It follows that $\Omega$ is a non-zero multiple of a unitary element in $M \ovt M$. Hence, we may assume that $\Omega \in \cU(M \ovt M)$.

Since $\de_j = \Ad \Omega^* \circ \de_i$, one deduces, as in the proof of Lemma \ref{lem.intertwine}, that $\Omega \in \rL(\Lambda_i) \ovt \rL(\Lambda_i)$ satisfies the $2$-cocycle and symmetry relation of Theorem \ref{thm.triv-2-coc}. So, by Theorem \ref{thm.triv-2-coc}, we find a unitary $w \in M$ such that $\Omega = \de_i(w^*)(w \ot w)$. Hence, for all $g \in \Lambda_j$,
$$wu^j_gw^*  \ot wu^j_g w^* = (w \ot w)\de_j(u^j_g)(w^* \ot w^*) = \de_i(w u^j_g w^*) \; .$$
So, by Lemma \ref{lem.elemcomult} below, we find for every $g \in \Lambda_j$ an element $\delta(g) \in \Lambda_i$ such that $w u^j_g w^* = u^i_{\delta(g)}$. It follows that $\delta$ is an isomorphism of groups and hence $\Lambda_i \cong \Lambda_j$.
\end{proof}

\section{Support length deformation and intertwining of rigid subalgebras}\label{sec.tensorlength}

Let $\Gamma \actson I$ be an action of a countable group $\Gamma$ on a countable set $I$ and let $(A_0,\tau)$ be a tracial von Neumann algebra. We denote $(A_0^I,\tau) := \bigotimes_{i \in I} (A_0,\tau)$. Put $(A,\tau) = (A_0^I,\tau)$ and $M = A \rtimes \Gamma$.

The following \emph{tensor length deformation} of $M = A_0^I \rtimes \Gamma$ was introduced in \cite{Io06}. For $0 < \rho < 1$, we define
$$\theta_\rho : M \recht M : \theta_\rho(a u_g) = \rho^n a u_g \quad\text{whenever}\;\; g \in \Gamma \;\;,\;\; a \in (A_0 \ominus \C1)^J \;\;\text{and}\;\; J \subset I \;\;,\;\; |J|=n \; .$$
By \cite[Section 2]{Io06} there is an embedding $M \hookrightarrow \Mtil$ and a $1$-parameter group of automorphisms $(\al_t)_{t \in \R}$ of $\Mtil$ such that
\begin{equation}\label{eq.dilation}
E_M(\al_t(x)) = \theta_{\rho_t}(x) \quad\text{for all}\quad x \in M \; .
\end{equation}
We will recall this construction in the proof of Theorem \ref{thm.malleable}. It follows in particular that $\theta_\rho$ is a well defined normal completely positive map on $M$. Also note that $\rho_t \recht 1$ when $t \recht 0$.

The length deformation $\theta_\rho$ is a variant of the malleable deformation that was discovered in \cite{Po03}. Both the length deformation and the malleable deformation allow to prove, under certain conditions, that rigid subalgebras of $M$ can be conjugated into $\rL \Gamma \subset M$. Theorem \ref{thm.malleable} below is an adaptation of \cite[Theorem 4.1]{Po03} and \cite[Theorem 2.1]{Io10}. We first need a technical lemma and some terminology.

Recall that if $Q \subset M$ is a von Neumann subalgebra, we define $\QN_M(Q) \subset M$ consisting of the elements $x \in M$ for which there exist $x_1,\ldots,x_n,y_1,\ldots,y_m$ satisfying
$$x Q \subset \sum_{i=1}^n Q x_i \quad\text{and}\quad Q x \subset \sum_{j=1}^m y_j Q \; .$$
Then, $\QN_M(Q)$ is a $*$-subalgebra of $M$ containing $Q$. Its weak closure is called the \emph{quasi-normalizer of $Q$ inside $M$.} By construction, both $Q$ and $Q' \cap M$ are subalgebras of $\QN_M(Q)$.

If $\Gamma \actson I$ and $\cF \subset I$, we denote by $\Stab \cF$ the subgroup of $\Gamma$ given by $\Stab \cF := \{g \in \Gamma \mid g \cdot i = i \;\text{for all}\; i \in \cF\}$. We also write $\Norm \cF := \{g \in \Gamma \mid g \cdot \cF = \cF \}$. If $\cF$ is finite, $\Stab \cF$ is a finite index subgroup of $\Norm \cF$.

\begin{lemma} \label{lem.control}
Let $\Gamma \actson I$ be an action. Let $A_0 \subset B_0$ and $N$ be tracial von Neumann algebras. Consider $\cM := N \ovt (A_0^I \rtimes \Gamma)$ and $\cMtil = N \ovt (B_0^I \rtimes \Gamma)$. Note that $\cM \subset \cMtil$.
\begin{enumlist}
\item If $P \subset p\cM p$ is a von Neumann subalgebra such that $P \not\embed_{\cM} N \ovt (A_0^I \rtimes \Stab i)$ for all $i \in I$, then the quasi-normalizer of $P$ inside $p \cMtil p$ is contained in $p \cM p$.
\item If $\cF \subset I$ is a finite subset and $Q \subset q(N \ovt A_0^\cF)q$ is a von Neumann subalgebra such that for all proper subsets $\cG \subset \cF$ we have $Q \not\embed_{N \ovt A_0^\cF} N \ovt A_0^\cG$, then the quasi-normalizer of $Q$ inside $q \cM q$ is contained in $q(N \ovt (A \rtimes \Norm \cF))q$.
\item If $\cG \subset I$ is a finite subset and $Q \subset q(N \ovt (A \rtimes \Stab \cG))q$ is a von Neumann subalgebra such that for all strictly larger subsets $\cG \subset \cG'$ we have $Q \not\embed_{N \ovt (A \rtimes \Stab \cG)} N \ovt (A \rtimes \Stab \cG')$, then the quasi-normalizer of $Q$ inside $q \cM q$ is contained in $q(N \ovt (A \rtimes \Norm \cG))q$.
\end{enumlist}
\end{lemma}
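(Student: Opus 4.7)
\emph{Plan.} All three parts share a common strategy: use a Fourier-type decomposition of the ambient algebra and show that any ``stray'' component of a quasi-normalizing element would force an intertwining of $P$ (resp.\ $Q$) into a subalgebra that the hypothesis rules out. For part (1), I would orthogonally decompose $\rL^2(\cMtil) = \bigoplus_\cF \cH_\cF$ over finite subsets $\cF \subset I$, where $\cH_\cF$ is the closed linear span of vectors $n \cdot \bigl(\bigotimes_{i \in \cF} c_i\bigr) \cdot \bigl(\bigotimes_{j \notin \cF} a_j\bigr) u_g$ with $n \in \rL^2(N)$, $c_i \in B_0 \ominus A_0$, $a_j \in A_0$, and $g \in \Gamma$; in particular $\rL^2(\cM) = \cH_\emptyset$. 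Given $x \in \QN_{p\cMtil p}(P)$, fix $y_1, \ldots, y_k \in \cMtil$ with $xP \subset \sum_j y_j P$, and let $x_\cF$ denote the projection of $x$ onto $\cH_\cF$. It then suffices to show $x_\cF = 0$ for every non-empty $\cF$.

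Assuming $x_\cF \ne 0$ for some such $\cF$, pick $i \in \cF$ and set $\mathcal{B}_i = N \ovt (A_0^I \rtimes \Stab i)$. By hypothesis $P \not\embed_{\cM} \mathcal{B}_i$, so there is a sequence $u_n \in \cU(P)$ with $\|E_{\mathcal{B}_i}(a u_n b)\|_2 \to 0$ for all $a,b \in \cM$. Left-multiplication preserves $\|x_\cF\|_2$, while $u_n x P \subset \sum_j y_j P$ keeps $u_n x$ in a fixed finitely generated right $P$-module. Fourier-expanding each $y_j = \sum_g y_{j,g} u_g$ and using that elements of $\Gamma \setminus \Stab i$ move the coordinate $i$ (so that only matrix coefficients factoring through $\mathcal{B}_i$ survive after projection onto $\cH_\cF$), one obtains an estimate of the form $\|x_\cF\|_2^2 \leq C \sum_{j,g} \|E_{\mathcal{B}_i}(\cdots u_n \cdots)\|_2^2$, which tends to zero, a contradiction.

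For part (2), take $x \in \QN_{q\cM q}(Q)$, Fourier-expand $x = \sum_g x_g u_g$ with $x_g \in N \ovt A_0^I$, and aim to show $x_g = 0$ whenever $g \notin \Norm \cF$. If $g\cF \ne \cF$, set $\cG = \cF \cap g\cF \subsetneq \cF$, and observe that for $a,b \in Q \subset N \ovt A_0^\cF$ one has $a(x_g u_g)b = a \, x_g \, \al_g(b)\, u_g$ with $a x_g \al_g(b) \in N \ovt A_0^{\cF \cup g\cF}$; its projection onto $N \ovt A_0^\cG$ is controlled by $E_{N \ovt A_0^\cG}(a u_n b)$-type expressions for a sequence $u_n \in \cU(Q)$ supplied by $Q \not\embed N \ovt A_0^\cG$. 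Combined with $Qx \subset \sum_\ell z_\ell Q$, this forces $x_g u_g = 0$. Part (3) is parallel: a non-zero $x_g u_g$ with $g \notin \Norm \cG$ satisfies $g (\Stab \cG) g^{-1} \cap \Stab \cG = \Stab(\cG \cup g^{-1}\cG)$ with $\cG \cup g^{-1}\cG \supsetneq \cG$ finite, and the quasi-normalization relation on $x$ would then force $Q \embed_{N \ovt (A \rtimes \Stab \cG)} N \ovt (A \rtimes \Stab(\cG \cup g^{-1}\cG))$, contradicting the hypothesis.

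The main obstacle is the estimate in part (1): the decomposition is indexed by finite \emph{subsets} of the infinite set $I$ rather than by the countable group $\Gamma$ as in parts (2) and (3). One must simultaneously control how the testing sequence $(u_n)$ interacts with all positions $i \in \cF$ while preserving the boundedness of the right $P$-action on $x_\cF$; the coupling between the coordinate at the chosen $i$ and those at $\cF \setminus \{i\}$ requires a delicate convergence argument in the spirit of \cite{Io06, Io10}.
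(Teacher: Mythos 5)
Your overall strategy is the right one in spirit -- the paper itself gives no argument beyond ``analogous to the proof of \cite[Lemma 4.2]{Va07}'', and that proof is indeed a tensor-leg/Fourier decomposition combined with sequences of unitaries witnessing the non-embedding hypotheses. But as written your text is a plan, not a proof: the one step that carries all the content, namely converting ``$x_\cF \neq 0$ for some non-empty $\cF$'' into ``$P \embed_\cM \cB_i$'', is only asserted, and you concede as much in your last paragraph. The missing mechanism is Popa's intertwining-by-bimodules argument: one checks that $\rL^2(\cMtil) \ominus \rL^2(\cM)$ decomposes as a direct sum of $\cM$-$\cM$-bimodules, each isomorphic to $\rL^2(\cM) \ot_{D} \rL^2(\cM)$ for a subalgebra $D \subset N \ovt (A_0^I \rtimes \Stab \cF)$ with $\cF \neq \emptyset$ (via $x \ot y \mapsto x b y$ for a tensor $b$ of legs in $B_0 \ominus A_0$ at the positions of $\cF$); then the quasi-normalization relation shows that $d := x - E_\cM(x)$ generates a $P$-$P$-subbimodule that is finitely generated as a right $P$-module, and a non-zero such subbimodule of $\rL^2(\cM) \ot_D \rL^2(\cM)$ forces $P \embed_\cM D \subset \cB_i$ for any $i \in \cF$. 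No direct norm estimate of the type $\|x_\cF\|_2^2 \leq C \sum \|E_{\cB_i}(\cdots u_n \cdots)\|_2^2$ is written down in your proposal, and deriving one amounts to redoing exactly this bimodule argument (compare the explicit computation with the elements $z_n$ in the proof of Theorem \ref{thm.malleable}).

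Several intermediate claims are also false as stated and would derail the estimate even if you attempted it. First, the subspaces $\cH_\cF$ are not left $\cM$-invariant: left multiplication by $u_g$ maps $\cH_\cF$ onto $\cH_{g \cdot \cF}$, so the $\cH_\cF$-component of $u_n x$ is not $u_n x_\cF$ and ``left multiplication preserves $\|x_\cF\|_2$'' cannot be used the way you use it; you must pass to the $\cM$-$\cM$-bimodules generated (sums over the $\Gamma$-orbit of $\cF$), which is precisely where the $\rL^2(\cM) \ot_D \rL^2(\cM)$ picture enters. Second, you quote $xP \subset \sum_j y_j P$, but to keep all $u_n x$ in one fixed finitely generated right $P$-module you need the other half of the quasi-normalizer relation, $Px \subset \sum_j y_j P$ (it is available, but your estimate is built on the wrong containment). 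Third, in part (2) the element $a x_g \al_g(b)$ does not lie in $N \ovt A_0^{\cF \cup g\cF}$, since $x_g$ is an arbitrary element of $N \ovt A_0^I$; the correct route is to note that $u x_g \in \sum_j (y_j)_g \, \si_g(Q)$ for all $u \in \cU(Q)$ and then use independence of the tensor factors to upgrade this to $Q \embed_{N \ovt A_0^\cF} N \ovt A_0^{\cF \cap g\cF}$. Fourth, in part (3) the coefficients $x_g$ are mixed by left and right multiplication with $Q$ inside double cosets $\Stab \cG \, g \, \Stab \cG$ (since $Q \not\subset N \ovt A$), so the decomposition must be organized by double cosets, and the relevant stabilizer computation is $g \Stab(\cG) g^{-1} \cap \Stab \cG = \Stab(\cG \cup g\cG)$. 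Each of these points is fixable, but together they constitute the actual proof, which your proposal does not yet contain.
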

\begin{proof}
Analogous to the proof of \cite[Lemma 4.2]{Va07}.
\end{proof}

\begin{theorem} \label{thm.malleable}
Let $\Gamma \actson I$ be an action and $(A_0,\tau)$ a tracial von Neumann algebra.
Assume that $\kappa \in \N$ such that $\Stab J$ is finite whenever $J \subset I$ and $|J| \geq \kappa$. Put $M = A_0^I \rtimes \Gamma$ as above.

Let $(N,\tau)$ be a tracial von Neumann algebra and $Q \subset p(N \ovt M)p$ a von Neumann subalgebra. Denote by $P \subset p(N \ovt M)p$ the quasi-normalizer of $Q$. If for some $0 < \rho < 1$ and $\delta > 0$ we have
\begin{equation}\label{eq.boundlength}
\tau(b^* (\id \ot \theta_\rho)(b)) \geq \delta \quad\text{for all}\quad b \in \cU(Q) \; ,
\end{equation}
then at least one of the following statements is true.
\begin{itemlist}
\item $Q \embed N \ot 1$.
\item $P \embed N \ovt (A \rtimes \Stab i)$ for some $i \in I$.
\item There exists a non-zero partial isometry $v \in p(N \ovt M)$ with $vv^* \in P$ and $v^* P v \subset N \ovt \rL \Gamma$. If $\Gamma$ is icc and $N$ is a factor, we may assume that $vv^* \in \cZ(P)$.
\end{itemlist}
\end{theorem}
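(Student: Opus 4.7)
The plan is to follow Popa's transversality technique \cite{Po03}, adapted to the support length deformation $\theta_\rho$ as in \cite[\S 2]{Io06} and \cite[Theorem 2.1]{Io10}, and conclude by a case analysis driven by Lemma \ref{lem.control}. Extend the dilation $\al_t$ to $N \ovt \Mtil$ as $\id \ot \al_t$, still denoted $\al_t$. Choose $t_0 > 0$ with $\rho_{t_0} = \rho$, so that by \eqref{eq.dilation} the hypothesis \eqref{eq.boundlength} reformulates as
\[
\|\al_{t_0}(b) - b\|_2^2 \;=\; 2 - 2\,\Re\,\tau\bigl(b^*(\id \ot \theta_\rho)(b)\bigr) \;\leq\; 2(1-\delta) \qquad \forall\, b \in \cU(Q),
\]
i.e.\ $\al_{t_0}$ is uniformly close to the identity on $\cU(Q)$ at distance strictly less than $\sqrt{2}$.

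I would then extract an intertwiner by Popa's convex hull trick. Let $K$ be the closed weak convex hull of $\{b\,\al_{t_0}(b)^* : b \in \cU(Q)\}$ inside $L^2(N \ovt \Mtil)$. Since $\tau(b\,\al_{t_0}(b)^*) = \tau\bigl((\id \ot \theta_\rho)(b)\,b^*\bigr) \geq \delta$ for every $b \in \cU(Q)$, the same estimate holds on $K$, so $0 \notin K$. The element $T \in K$ of minimum $\|\cdot\|_2$-norm is therefore non-zero, and the invariance of $K$ under $x \mapsto a\,x\,\al_{t_0}(a^*)$ for $a \in \cU(Q)$, combined with uniqueness of the minimum, forces $aT = T\al_{t_0}(a)$ for all $a \in Q$. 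Spectral projections of $TT^* \in Q' \cap p(N \ovt \Mtil)p$ then produce a non-zero partial isometry $V$ satisfying $aV = V\al_{t_0}(a)$ for $a \in Q$ and $VV^* \in Q' \cap p(N \ovt \Mtil)p$. Moreover, the relation $yQ \subset \sum_i Q y_i$ satisfied by every $y \in P$ propagates via $V$ to $(yV)\,\al_{t_0}(Q) \subset \sum_i Q\,(y_i V)$, so that $V^*PV$ is contained in the quasi-normalizer of $\al_{t_0}(Q)$ inside $V^*V\,(N \ovt \Mtil)\,V^*V$.

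The trichotomy is now driven by Lemma \ref{lem.control}. If $Q \embed_{N \ovt M} N \ot 1$, the first alternative holds. If instead $Q \embed_{N \ovt M} N \ovt (A \rtimes \Stab i)$ for some $i \in I$, then Lemma \ref{lem.control}(3) applied to the quasi-normalizing inclusion $Q \subset P$ upgrades this to $P \embed_{N \ovt M} N \ovt (A \rtimes \Stab i)$, which is the second alternative. Otherwise, Lemma \ref{lem.control}(1) applied to $\al_{t_0}(Q) \subset N \ovt \Mtil$ (viewing $N \ovt \Mtil$ as the $\cMtil$ of that lemma, with $B_0 \supset A_0$ the enlargement used to build $\Mtil$) forces $\QN_{N \ovt \Mtil}(\al_{t_0}(Q)) \subset \al_{t_0}(N \ovt M)$, and hence $V^*PV \subset \al_{t_0}(N \ovt M)\,V^*V$. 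To descend this inclusion back to $N \ovt M$ itself, I would invoke the standard Popa--Ioana ``second deformation'' period-two automorphism $\beta$ of $\Mtil$ fixing $M$ pointwise and satisfying $\beta\al_t = \al_{-t}\beta$: combining the intertwining relation for $V$ with its image under $\id \ot \beta$ produces a new partial isometry living in $p(N \ovt M)$. Iterating this together with Lemma \ref{lem.control}(3) along the tower of stabilizer subalgebras $N \ovt (A \rtimes \Stab \cG)$ for growing finite $\cG \subset I$ — a process which must terminate after at most $\kappa$ steps since $\Stab \cG$ becomes finite once $|\cG| \geq \kappa$ — finally produces the required partial isometry $v \in p(N \ovt M)$ with $vv^* \in P$ and $v^*Pv \subset N \ovt \rL\Gamma$. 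The upgrade to $vv^* \in \cZ(P)$ under factoriality of $N$ and the icc hypothesis on $\Gamma$ is a standard averaging over $\cU(\cZ(P))$.

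The main obstacle is precisely this final descent from $N \ovt \Mtil$ back to $N \ovt M$: Popa's convex hull trick delivers the intertwiner $V$ only in the dilated algebra, and bridging it to an intertwiner in $N \ovt M$ while preserving the quasi-normalizer structure requires combinatorial control of the tensor-length filtration of $\Mtil$ over $M$ together with iterated use of Lemma \ref{lem.control}(3). The hypothesis that $\Stab J$ is finite whenever $|J| \geq \kappa$ enters essentially here, as it guarantees that the tower of stabilizer subalgebras terminates after finitely many steps, forcing the final containment into $N \ovt \rL\Gamma$ rather than into some intermediate $N \ovt (A \rtimes \Stab \cG)$.
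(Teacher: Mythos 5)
Your overall strategy (Popa's convex-hull trick for the deformation $\al_t$, then a case analysis via Lemma \ref{lem.control}) is the right philosophy, but there is a genuine gap at the heart of the argument, precisely at the point you flag as "the main obstacle". Your convex-hull step only produces an intertwiner $V$ with $aV = V\al_{t_0}(a)$ at the \emph{fixed, possibly small} time $t_0$, and from Lemma \ref{lem.control}.1 (applied to the copy $\al_{t_0}(\cM)$) you can at best conclude $V^* P V \subset \al_{t_0}(\cM)V^*V$ -- an inclusion into the \emph{dilated} copy of $N \ovt M$, not into $N \ovt M$ itself. The proposed repair, "combining the intertwining relation for $V$ with its image under $\id \ot \beta$ produces a new partial isometry living in $p(N \ovt M)$", does not do what you want: since $\beta$ fixes $M$ and flips the sign of $t$, elements of the form $\al_{t}(\beta(V)^* d V)$ with $d$ quasi-normalizing $Q$ are $Q$-$\al_{2t}(Q)$-finite elements of $\cMtil$, i.e.\ the $\beta$-trick \emph{doubles the time parameter} (and this is exactly where the hypothesis $P \not\embed N \ovt (A \rtimes \Stab i)$ enters, via Lemma \ref{lem.control}.1, to show $\beta(V^*) d V \neq 0$ for some $d$); it never lands you back in $p(N\ovt M)$. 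In the actual proof one chooses $t = 2^{-n}$ and doubles $n$ times to reach $t = 1$, and then the descent into $\cM$ is achieved by a completely different mechanism which is absent from your sketch: using the concrete structure $B_0 = A_0 * \rL\Z$ at time $1$ (the decomposition of $\rL^2(B_0)$ over $A_0\,\al_1(A_0)$ and conditional expectations onto the finite-support algebras $N \ovt (A_0^\cF \rtimes \Stab\cF)$), one shows that if $Q \not\embed_\cM N \ovt (A_0^\cF \rtimes \Stab \cF)$ for every finite $\cF$, then $\al_1(Q) \not\prec_{\cMtil} \cM$, contradicting the existence of the intertwiner. Only after obtaining $Q \embed_\cM N \ovt (A_0^\cF \rtimes \Stab \cF)$ inside $\cM$ does the Lemma \ref{lem.control}.2/.3 machinery force $\cF = \cG = \emptyset$ and give $v^* P v \subset N \ovt \rL\Gamma$. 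Without this Fourier-type analysis your argument never produces any intertwining statement inside $N \ovt M$, so the third alternative is not reached.

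A secondary, repairable issue: your claim that $Q \embed N \ovt (A \rtimes \Stab i)$ "upgrades" to $P \embed N \ovt (A \rtimes \Stab i)$ by a direct application of Lemma \ref{lem.control}.3 is too loose. That lemma needs a corner of $Q$ to be genuinely realized inside $q(N \ovt (A \rtimes \Stab\cG))q$ together with the non-embedding conditions for all strictly larger $\cG'$ (obtained by a maximality choice, and separately handling the degenerate case where $\Stab\cG$ becomes finite, which is where $Q \not\embed N \ot 1$ and Lemma \ref{lem.control}.2 with a minimal non-empty $\cF$ are used, plus the finite-index passage from $\Norm\cF$ to $\Stab i$). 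As stated your implication would fail, e.g., when $Q \embed N \ot 1$. These refinements follow the same lines as the paper, but they must be carried out; as written the case analysis does not close.
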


\begin{proof}
We recall from \cite[Section 2]{Io06} the following construction. Put $B_0 = A_0 * \rL \Z$, with respect to the natural traces. Denote by $v \in \rL \Z$ the canonical unitary generator and choose a self-adjoint element $h \in \rL \Z$ with spectrum $[-\pi,\pi]$ such that $v = \exp(ih)$. Denote by $\al^0_t \in \Aut(B_0)$ the inner automorphism given by $\al^0_t = \Ad \exp(ith)$. Put $B = B_0^I$ and $\al_t = \bigotimes_{i \in I} \al^0_t$. Since $\al_t$ commutes with the generalized Bernoulli action, we extend $\al_t$ to an automorphism of $\Mtil := B \rtimes \Gamma$ satisfying $\al_t(u_g) = u_g$ for all $g \in \Gamma$. Then \eqref{eq.dilation} above holds with $\rho_t = \bigl| \frac{\sin(\pi t)}{\pi t}\bigr|^2$.

Denote by $\beta_0 \in \Aut(B_0)$ the automorphism given by $\beta_0(a) = a$ for all $a \in A_0$ and $\beta_0(v) = v^*$. Define $\beta = \bigotimes_{i \in I} \beta_0$ and extend $\beta_0$ to $\Mtil$ by acting trivially on $\rL \Gamma$. By construction, $\beta^2 = \id$ and $\beta \circ \al_t \circ \beta = \al_{-t}$. We continue writing $\al_t,\be$ instead of $\id \ot \be$ and $\id \ot \al_t$ on $N \ovt \Mtil$. Write $\cM := N \ovt M$ and $\cMtil := N \ovt \Mtil$. By \eqref{eq.dilation}, for all $x \in \cM$, we have $E_\cM(\al_t(x)) = (\id \ot \theta_{\rho_t})(x)$.

Assume now that $Q$ and $P$ are as in the formulation of the theorem and that \eqref{eq.boundlength} holds. Assume that for all $i \in I$, we have $P \not\embed N \ovt (A \rtimes \Stab i)$. Given von Neumann subalgebras $Q_1,Q_2 \subset \cMtil$, we say that $x \in \cMtil$ is $Q_1$-$Q_2$-finite if there exist $x_1,\ldots,x_n,y_1,\ldots,y_m \in \cMtil$ such that
$$x Q_2 \subset \sum_{i=1}^n Q_1 x_i \quad\text{and}\quad Q_1 x \subset \sum_{j=1}^m y_j Q_2 \; .$$
Note that by definition $\QN_{p\cM p}(Q)$ equals the set of $Q$-$Q$-finite elements in $p\cM p$.

We follow the lines of \cite[Proof of Lemma 5.2]{Va07} to prove the following claim: there exists a non-zero $Q$-$\al_1(Q)$-finite element in $p \cMtil \al_1(p)$. Combining \eqref{eq.boundlength} and \eqref{eq.dilation}, we find an $n \in \N$ such that writing $t = 2^{-n}$, we have $\tau(b^* \al_t(b)) \geq \delta$ for all unitaries $b \in Q$. Define $v \in \cMtil$ as the element of minimal $2$-norm in the $\| \, \cdot \, \|_2$-closed convex hull of $\{ b^* \al_{t}(b) \mid b \in \cU(Q)\}$. Then, $\tau(v) \geq \delta$ and hence, $v \neq 0$. By construction, $v \in p \cMtil \al_t(p)$ and $b v = v \al_{t}(b)$ for all $b \in Q$. Hence, $v$ is $Q$-$\al_t(Q)$-finite.

To conclude the proof of the claim, it suffices to show the following statement: if there exists a non-zero $Q$-$\al_t(Q)$-finite element $v \in p \cMtil \al_t(p)$, then the same is true for $2t$ instead of $t$. For all $d \in \QN_{p \cM p}(Q)$, we have that $\al_t(\be(v^*) d v)$ is a $Q$-$\al_{2t}(Q)$-finite element in $p \cMtil \al_{2t}(p)$. So, we have to prove that there exists a $d \in \QN_{p \cM p}(Q)$ such that $\be(v^*) d v \neq 0$. If this is not the case and if we denote by $q \in p\cMtil p$ the projection onto the closed linear span of all $\{ \Image(dv) \mid d \in \QN_{p \cM p}(Q)\}$, it follows that $q$ and $\be(q)$ are orthogonal. By construction, $q$ commutes with $P$. By Lemma \ref{lem.control}.1, $q \in p \cM p$. Hence, $q = \be(q)$ and it follows that $q = 0$. But then, $v = 0$, a contradiction. Hence, the claim is proven.

Since there is a non-zero $Q$-$\al_1(Q)$-finite element in $\cMtil$, we have in particular that $\al_1(Q) \prec_{\cMtil} \cM$.

For every finite subset $\cF \subset I$, define $\cM(\cF) := N \ovt (A_0^\cF \rtimes \Stab \cF)$. By convention, $\cM(\emptyset) = N \ovt \rL \Gamma$. We now prove that there exists a finite, possibly empty, subset $\cF \subset I$ such that $Q \embed_\cM \cM(\cF)$. Assume the contrary and take a sequence of unitaries $v_n \in Q$ such that
\begin{equation}\label{eq.firsteq}
\|E_{\cM(\cF)}(a v_n b^*)\|_2 \recht 0 \quad\text{for all}\quad a, b \in \cM \quad\text{and all finite subsets}\quad \cF \subset I \; .
\end{equation}
We will deduce from this that
\begin{equation}\label{eq.secondeq}
\|E_{\cM}(x \al_1(v_n) y^*)\|_2 \recht 0 \quad\text{for all}\quad x,y \in \cMtil \; .
\end{equation}
Formula \eqref{eq.secondeq} implies that $\al_1(Q) \not\prec_{\cMtil} \cM$, contradicting the statement $\al_1(Q) \prec_{\cMtil} \cM$ proven above.

We now deduce \eqref{eq.secondeq} from \eqref{eq.firsteq}. Let $\cF \subset I$ be a finite subset and $x_i \in B_0 \ominus A_0 \al_1(A_0)$ for all $i \in \cF$. Put $x_i = 1$ when $i \in I - \cF$ and define $x = 1_N \ot \bigotimes_{i \in I} x_i$. The linear span of all $\cM x (1_N \ot \al_1(A))$ forms a dense $*$-subalgebra of $\cMtil$. So it suffices to prove \eqref{eq.secondeq} for $x,y$ having such a special form: $x$ as above and $y = 1 \ot \bigotimes_{j \in I} y_j$ where $y_j \in B_0 \ominus A_0 \al_1(A_0)$ when $j$ belongs to a finite subset $\cG \subset I$ and $y_j = 1$ when $j \not\in \cG$.

Denote $v_n = \sum_{g \in \Gamma} (v_n)^g (1 \ot u_g)$, where $(v_n)^g \in N \ovt A$, and observe that
$$E_\cM(x \al_1(v_n) y^*) = \sum_{g \in \Gamma} E_{N \ovt A}\bigl( x \al_1((v_n)^g) \si_g(y^*)\bigr) (1 \ot u_g) \; .$$
If $g \cdot \cG \neq \cF$, we have $E_{N \ovt A}\bigl( x \al_1((v_n)^g) \si_g(y^*)\bigr) = 0$. If $g \cdot \cG = \cF$, we have
$$E_{N \ovt A}\bigl( x \al_1((v_n)^g) \si_g(y^*)\bigr) = E_{N \ovt A}\bigl( x \; \al_1\bigl(E_{N \ovt A_0^\cF}((v_n)^g)\bigr) \; \si_g(y^*) \bigr) \; .$$
Take finitely many $g_1,\ldots,g_k \in \Gamma$ such that $g_i \cdot \cG = \cF$ for all $i \in \{1,\ldots,k\}$ and such that $\{g \in \Gamma \mid g \cdot \cG = \cF\}$ is the disjoint union of $(\Stab \cF) g_1,\ldots,(\Stab \cF) g_k$. Put
$$z_n = \sum_{i=1}^k E_{N \ovt (A_0^\cF \rtimes \Stab \cF)}\bigl( v_n (1 \ot u_{g_i}^*)\bigr) (1 \ot u_{g_i}) \; .$$
We have shown that
$$E_{\cM}(x \al_1(v_n) y^*) = E_\cM (x \al_1(z_n) y^*) \; .$$
Since by \eqref{eq.firsteq}, $\|z_n\|_2 \recht 0$, we get \eqref{eq.secondeq}.

So, take a finite subset $\cF \subset I$ such that $Q \embed N \ovt (A_0^\cF \rtimes \Stab \cF)$. We already assumed that for all $i \in I$ we have $P \not\prec N \ovt (A \rtimes \Stab i)$. We now also assume that $Q \not\prec N \ot 1$ and we prove that the third statement of the theorem holds.

Take a larger finite subset $\cG \supset \cF$ such that $Q \embed N \ovt (A_0^\cF \rtimes \Stab \cG)$ where $\cG$ satisfies one of two alternatives: $\Stab \cG$ is finite or $Q \not\embed N \ovt (A_0^\cF \rtimes \Stab \cG')$ whenever $\cG'$ is strictly larger than $\cG$. We claim that the first alternative does not occur. If it would, we get that $Q \embed N \ovt A_0^\cF$. Since $Q \not\embed N \ot 1$, we have $\cF \neq \emptyset$ and we can make $\cF$ smaller, but still non-empty, until for all proper subsets $\cF' \subset \cF$ we have $Q \not\embed N \ovt A_0^{\cF'}$. So we can take projections $p_0 \in Q$, $q \in N \ovt A_0^\cF$, a $*$-homomorphism $\vphi : p_0 Q p_0 \recht q(N \ovt A_0^\cF)q$ and a non-zero partial isometry $v \in p_0 \cM q$ such that $b v = v \vphi(b)$ for all $b \in p_0 Q p_0$ and such that
$$\vphi(p_0 Q p_0) \not\embed_{N \ovt A_0^\cF} N \ovt A_0^{\cF'}$$
whenever $\cF' \subset \cF$ is a proper subset. Lemma \ref{lem.control}.2 implies that $v^* P v \subset N \ovt (A \rtimes \Norm \cF)$ and hence $P \embed N \ovt (A \rtimes \Norm \cF)$. Since $\cF$ is finite and non-empty, $\Stab \cF$ has finite index in $\Norm \cF$ and we reach the contradiction that $P \embed N \ovt (A \rtimes \Stab i)$ for some $i \in I$. This contradiction proves the claim above. We conclude that $Q \embed N \ovt (A_0^\cF \rtimes \Stab \cG)$ and that $Q \not\embed N \ovt (A_0^\cF \rtimes \Stab \cG')$ whenever $\cG'$ is strictly larger than $\cG$.

Take projections $p_0 \in Q$, $q \in N \ovt (A_0^\cF \rtimes \Stab \cG)$, a $*$-homomorphism $\vphi : p_0 Q p_0 \recht q(N \ovt (A_0^\cF \rtimes \Stab \cG))q$ and a non-zero partial isometry $v \in p_0 \cM q$ such that $b v = v \vphi(b)$ for all $b \in p_0 Q p_0$ and such that
\begin{equation}\label{eq.leukeinbedding}
\vphi(Q) \not\embed_{N \ovt (A_0^\cF \rtimes \Stab \cG)} N \ovt (A_0^\cF \rtimes \Stab \cG')
\end{equation}
whenever $\cG'$ is strictly larger than $\cG$.

We claim that $\cG = \emptyset$ (and hence also $\cF = \emptyset$). Assume the contrary. Then \eqref{eq.leukeinbedding} implies in particular that
$$\vphi(Q) \not\embed_{N \ovt (A \rtimes \Stab \cG)} N \ovt (A \rtimes \Stab \cG')$$
whenever $\cG'$ is strictly larger than $\cG$. Then Lemma \ref{lem.control}.3 implies that $v^* P v \subset N \ovt (A \rtimes \Norm \cG)$ and hence $P \embed N \ovt (A \rtimes \Norm \cG)$, which leads as above to the contradiction that $P \embed N \ovt (A \rtimes \Stab i)$ for some $i \in I$. This proves the claim.

By the claim above, $\cF = \cG = \emptyset$. Note that $vv^*$ commutes with $p_0 Q p_0$ and hence belongs to $P$. Also, by \eqref{eq.leukeinbedding} and Lemma \ref{lem.control}.1 we get that $v^* P v \subset N \ovt \rL \Gamma$. Finally, assume that $N$ is a II$_1$ factor and that $\Gamma$ is icc. Take partial isometries $v_1,\ldots,v_n \in P$ with $v_i^* v_i \leq p$ and such that $\sum_{i=1}^n v_i v_i^*$ is a central projection in $P$. Since $N \ovt \rL \Gamma$ is a II$_1$ factor, take partial isometries $w_1,\ldots,w_n \in N \ovt \rL \Gamma$ such that $w_i w_i^* = v^* v_i^* v_i v$ and such that the projections $w_i^* w_i$ are orthogonal. Define $x = \sum_{i=1}^n v_i v w_i$. Then, $x$ is a partial isometry satisfying $xx^* \in \cZ(P)$ and $x^* P x \subset N \ovt \rL \Gamma$.
\end{proof}

Recall from paragraphs \ref{subsec.relT} and \ref{subsec.relamen} the concepts of relative property (T) and relative amenability of von Neumann subalgebras.

\begin{corollary}\label{cor.malleable-rigid}
Let $\Gamma$ be an icc group and $\Gamma \actson I$ an action. Assume that $\kappa \in \N$ such that $\Stab J$ is finite whenever $J \subset I$ and $|J| \geq \kappa$. Assume that $\Stab i$ is amenable for all $i \in I$. Put $A = A_0^I$ and $M = A \rtimes \Gamma$ as above. Let $(N,\tau)$ be a II$_1$ factor and $Q \subset p(N \ovt M)p$ a von Neumann subalgebra satisfying at least one of the following rigidity properties.
\begin{itemlist}
\item $Q \subset p(N \ovt M)p$ has the relative property (T).
\item $Q' \cap p(N \ovt M)p$ is strongly non-amenable relative to $N \ot 1$.
\end{itemlist}
Denote by $P \subset p(N \ovt M)p$ the quasi-normalizer of $Q$ inside $p(N \ovt M)$. Then, at least one of the following statements is true.
\begin{itemlist}
\item $Q \embed N \ot 1$.
\item $P \embed N \ovt (A \rtimes \Stab i)$ for some $i \in I$.
\item There exists $v \in N \ovt M$ with $vv^* = p$ and $v^* P v \subset N \ovt \rL \Gamma$.
\end{itemlist}
\end{corollary}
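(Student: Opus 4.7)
The plan is to verify condition \eqref{eq.boundlength} of Theorem \ref{thm.malleable} in both cases and then upgrade its third alternative via a Zorn argument. It suffices to show that $(\id \ot \theta_\rho)$ converges to the identity uniformly on the unit ball of $Q$ as $\rho \nearrow 1$, since once $\|b - (\id \ot \theta_\rho)(b)\|_2 < 1/2$ for all $b \in \cU(Q)$, one has $\Re\,\tau(b^* (\id \ot \theta_\rho)(b)) \geq 1/2$, giving \eqref{eq.boundlength} with $\delta = 1/2$. In the relative property (T) case this is immediate from paragraph \ref{subsec.relT}: each $\id \ot \theta_\rho$ is a normal, subunital, subtracial completely positive map on $N \ovt M$ that converges pointwise in $\|\,\cdot\,\|_2$ to the identity on the dense subspace spanned by elements of the form $x(1 \ot a u_g)$ with $a \in (A_0 \ominus \C 1)^J$ and $J \subset I$ finite, where the convergence is immediate from the definition of $\theta_\rho$.

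The main obstacle is the second case, where $Q' \cap p(N \ovt M)p$ is strongly non-amenable relative to $N \ot 1$: here we run Popa's spectral gap method \cite{Po06a} inside the dilation $\al_t$, in the form developed for the length deformation in \cite{Io06}. Set $\cM := N \ovt M$, $\cMtil := N \ovt \Mtil$ and $\cH := \rL^2(\cMtil) \ominus \rL^2(\cM)$, viewed as an $\cM$-$\cM$-bimodule. Assuming uniform convergence fails, pick $\rho_n \nearrow 1$ and $b_n \in \cU(Q)$ with $\|(\id \ot \theta_{\rho_n})(b_n) - b_n\|_2$ bounded below. Standard manipulations with $\al_t$ then produce vectors $\xi_n \in \cH$ built from $(\id \ot \al_{t_n})(b_n)$, its $E_\cM$-projection, and suitable right multiplications by $b_n^*$, which are uniformly bounded in $\cMtil$, asymptotically $Q$-central (using that $\al_t \to \id$ pointwise in $\|\,\cdot\,\|_2$ on $\cM$ and the $\rL\Gamma$-bimodularity of $\al_t$), and satisfy $\limsup_n \|\xi_n p\|_2 > 0$. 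Lemma \ref{lem.sequence} then yields a non-zero projection $p_1 \in Q' \cap p\cM p$ with $\bim{Q p_1}{\rL^2(p_1 \cM)}{\cM} \weakcont \bim{Q}{\cH}{\cM}$. The decisive structural input, essentially from \cite[Section 2]{Io06}, is an analysis of $\bim{\cM}{\cH}{\cM}$: it decomposes as a direct sum of subbimodules each weakly contained in $\bim{\cM}{(\rL^2(\cM) \ot_{N \ovt \rL \Stab J} \rL^2(\cM))}{\cM}$ for some finite non-empty $J \subset I$. Since $\Stab J \subset \Stab i$ is amenable for any $i \in J$, the algebra $N \ovt \rL \Stab J$ is amenable relative to $N \ot 1$, so each summand is weakly contained in the coarse $\cM$-bimodule relative to $N \ot 1$. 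Consequently $Qp_1$ is amenable relative to $N \ot 1$, contradicting strong non-amenability of $Q' \cap p\cM p$.

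With \eqref{eq.boundlength} established, Theorem \ref{thm.malleable} yields one of its three alternatives. The first two coincide with the first two of the corollary. In the third, since $\Gamma$ is icc and $N$ is a II$_1$ factor, we obtain $v_0$ with $v_0 v_0^* \in \cZ(P)$ and $v_0^* P v_0 \subset N \ovt \rL \Gamma$; a Zorn argument then upgrades this. Take a maximal family $(v_\iota)$ of partial isometries with $v_\iota v_\iota^* \in \cZ(P)$ pairwise orthogonal and $v_\iota^* P v_\iota \subset N \ovt \rL \Gamma$, and set $q := p - \sum v_\iota v_\iota^* \in \cZ(P)$. If $q \neq 0$, both rigidity hypotheses transfer to $Qq \subset q\cM q$, so Theorem \ref{thm.malleable} applies on this corner: its first or second alternative already yields the corresponding corollary's alternative globally (embedding from a corner implies embedding from the whole, by Remark \ref{rem.intertwine}), while its third alternative produces a new partial isometry with left support in $\cZ(P)$ orthogonal to the existing ones, contradicting maximality. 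Once maximality exhausts $p$, a standard rearrangement inside the II$_1$ factor $N \ovt \rL \Gamma$ (choose partial isometries $w_\iota \in N \ovt \rL \Gamma$ with $w_\iota w_\iota^* = v_\iota^* v_\iota$ and $w_\iota^* w_\iota$ pairwise orthogonal) combines the $v_\iota$ into a single partial isometry $v := \sum v_\iota w_\iota$ with $vv^* = p$ and $v^* P v \subset N \ovt \rL \Gamma$, completing the proof.
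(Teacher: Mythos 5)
Your overall skeleton (verify the lower bound \eqref{eq.boundlength} for the tensor length deformation, invoke Theorem \ref{thm.malleable}, then patch partial isometries together by maximality inside the II$_1$ factor $N \ovt \rL\Gamma$) is the same as the paper's, and your treatment of the relative property (T) case is essentially correct (the constant $1/2$ should of course be replaced by something small compared to $\tau(p)$, since $\|b\|_2 = \tau(p)^{1/2}$ may be small, but that is cosmetic).

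The spectral gap case, however, contains a genuine gap: you have interchanged the roles of $Q$ and of its relative commutant $T := Q' \cap p(N \ovt M)p$. First, the vectors $\xi_n$ built from $\al_{t_n}(b_n) - E_{\cM}(\al_{t_n}(b_n))$ with $b_n \in \cU(Q)$ are \emph{not} asymptotically $Q$-central: for $a \in Q$ the commutator contains the term $\al_{t_n}([a,b_n])$, which has no reason to be small since $b_n$ varies with $n$; right multiplication by $b_n^*$ does not repair this. What the standard computation gives is asymptotic centrality with respect to $T$ (for $a \in T$ one has $[a,b_n]=0$ exactly, and only the error $a - \al_{t_n}(a)$ survives). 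Second, and more importantly, even if your construction went through, the conclusion that $Qp_1$ is amenable relative to $N \ot 1$ contradicts nothing: the hypothesis of the corollary is that the \emph{relative commutant} $Q' \cap p(N \ovt M)p$ is strongly non-amenable relative to $N \ot 1$, and in the intended applications $Q$ itself may perfectly well be amenable (e.g.\ $Q = \de(\rL\Gamma_0)$ with $\Gamma_0$ amenable but with non-amenable centralizer in $\Gamma_1$). The correct argument, as in the paper, runs the spectral gap through $T$: strong non-amenability of $T$ relative to $N \ot 1$, combined with the weak containment of $\rL^2(\cMtil \ominus \cM)$ in the relative coarse $\cM$-bimodule and Lemma \ref{lem.sequence}, shows that any element of the unit ball of $\cMtil$ that almost commutes with finitely many fixed elements of $T$ lies close to $\cM$; this is then applied to $x = \al_t(b)$ for $b \in \cU(Q)$, which commutes with $T$ exactly, yielding \eqref{eq.boundlength} uniformly on $\cU(Q)$. (A secondary imprecision: the middle algebra in the bimodule decomposition of $\rL^2(\cMtil \ominus \cM)$ is $N \ovt (A_0^{I \setminus \cF} \rtimes S)$ with $S$ an amenable subgroup commensurating the stabilizers, not $N \ovt \rL(\Stab J)$; it is its amenability, hence relative amenability to $N \ot 1$, that gives the weak containment you invoke.) As written, the case on which the whole corollary rests in the spectral gap setting is therefore not proved.
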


\begin{proof}
Assume that $Q \not\embed N \ot 1$ and that for all $i \in I$, we have $P \not\embed N \ovt (A \rtimes \Stab i)$. It is sufficient to prove the following statement: for every non-zero central projection $p_0 \in \cZ(P)$, there exists a $0 < \rho < 1$ and a $\delta > 0$ such that
\begin{equation}\label{eq.onestar}
\tau(b^* (\id \ot \theta_\rho)(b)) \geq \delta \quad\text{for all}\quad b \in \cU(Qp_0) \; .
\end{equation}
Indeed, in these circumstances Theorem \ref{thm.malleable} provides a non-zero partial isometry $v$ such that $vv^* \in \cZ(P) p_0$ and $v^* P v \subset N \ovt \rL \Gamma$. Moreover, since $N \ovt \rL \Gamma$ is a II$_1$ factor, we can make sure that $v^* v$ is any projection with the same trace as $vv^*$. As a result, a maximality argument allows to put together several $v$'s and find a partial isometry $v \in N \ovt M$ such that $vv^* = p$ and $v^* P v \subset N \ovt \rL \Gamma$.

Choose a non-zero central projection $p_0 \in \cZ(P)$.

If $Q \subset p(N \ovt M)p$ has the relative property (T), the same is true for $Q p_0 \subset p_0(N \ovt M)p_0$. When $\rho \recht 1$, the completely positive maps $\theta_\rho$ tend pointwise to the identity. The relative property (T) yields the existence of $0 < \rho < 1$ and $\delta > 0$ such that \eqref{eq.onestar} holds for all $b \in \cU(Qp_0)$.

If $Q' \cap p (N \ovt M)p$ is strongly non-amenable relative to $N \ot 1$, the same is true for $(Qp_0)' \cap p_0(N \ovt M)p_0$.

Consider the von Neumann algebra $\Mtil$ as in the proof of Theorem \ref{thm.malleable}. Recall that $M \subset \Mtil = B \rtimes \Gamma$ where $B = B_0^I$ and $B_0 = A_0 * \rL \Z$. Also, $\theta_{\rho_t}(x) = E_M(\al_t(x))$ for all $x \in M$.

As explained in paragraph \ref{subsec.bimod}, we denote by $\weakcont$ the weak containment of bimodules. We claim that
\begin{equation}\label{eq.claimweak}
\bim{M}{\rL^2(\Mtil \ominus M)}{M} \weakcont \bim{(M \ot 1)}{\rL^2(M \ovt M)}{(1 \ot M)} \; .
\end{equation}
In the case of plain Bernoulli actions, this claim has been proven in \cite[Lemma 5]{CI08}. For the convenience of the reader we include a proof in our generalized Bernoulli case, using the amenability of all $\Stab i$, $i \in I$.

Denote by $u$ the canonical unitary generator of $\rL \Z \subset B_0$. Choose a subset $\cA_0 \subset A_0 \ominus \C 1$ such that $\cA_0$ forms an orthonormal basis of $\rL^2(A_0) \ominus \C 1$. Define the subset $\cB_0 \subset B_0$ given by
$$\cB_0 := \{u^{n_1} a_1 u^{n_2} \cdots a_{k-1} u^{n_k} \mid k \geq 1 \; , \; n_1,\ldots,n_k \in \Z - \{0\} \; , \; a_1,\ldots,a_{k-1} \in \cA_0 \} \; .$$
By construction, we have a decomposition
$$\rL^2(B_0) = \rL^2(A_0) \oplus \bigoplus_{b \in \cB_0} \overline{A_0 b A_0}$$
of $\rL^2(B_0)$ into orthogonal $A_0$-$A_0$-subbimodules.

Whenever $\cF \subset I$ is a non-empty finite subset and $(b_i)_{i \in \cF}$ are elements in $\cB_0$, we define the element $b \in B$ as
\begin{equation}\label{eq.formb}
b = \Bigl( \bigotimes_{i \in \cF} b_i \Bigr) \ot \Bigl(\bigotimes_{i \in I - \cF} 1 \Bigr) \; .
\end{equation}
Define the subgroup $S < \Gamma$ given by
$$S := \{g \in \Gamma \mid g \cdot \cF = \cF \;\;\text{and}\;\; b_{g \cdot i} = b_i \;\;\text{for all}\;\; i \in \cF \} \; .$$
Define $M_0 = A_0^{I - \cF} \rtimes S$. One checks that the map $x \ot y \recht x b y$ defines an $M$-$M$-bimodular unitary operator
$$\rL^2(M) \ot_{M_0} \rL^2(M) \recht \overline{M b M} \; .$$
Since $\cF$ is finite, $S \cap \Stab i < S$ has finite index for all $i \in \cF$ and so, $S$ is amenable. It follows that $M_0$ is amenable and hence,
$$\bim{M}{\bigl(\rL^2(M) \ot_{M_0} \rL^2(M)\bigr)}{M} \weakcont \bim{(M \ot 1)}{\rL^2(M \ovt M)}{(1 \ot M)} \; .$$
Since the $\overline{M b M}$, $b$ as above, form an orthogonal decomposition of $\rL^2(\Mtil \ominus M)$ into $M$-$M$-sub\-bi\-modules, the claim \eqref{eq.claimweak} follows.

As in the proof of Theorem \ref{thm.malleable}, denote $\cM := N \ovt M$ and $\cMtil := N \ovt \Mtil$.
By claim \eqref{eq.claimweak} we have
$$\bim{\cM}{\rL^2(\cMtil \ominus \cM)}{\cM} \weakcont \bim{\cM_{12}}{\rL^2(N \ovt M \ovt M)}{\cM_{13}} \; .$$
Write $T := (Qp_0)' \cap p_0 \cM p_0$. Since $T$ is strongly non-amenable relative to $N \ot 1$, it follows that for all non-zero projections $p_1 \in T' \cap p_0 \cM p_0$, the bimodule $\bim{T}{\rL^2(p_1 \cM)}{\cM}$ is not weakly contained in $\bim{T}{\rL^2(p_0(\cMtil \ominus \cM))}{\cM}$. By Lemma \ref{lem.sequence}, we get a finite number of elements $a_1,\ldots,a_n \in T$ and $\eps > 0$ such that
\begin{equation}\label{eq.twostar}
\begin{split}
&\text{if}\;\; x \in p_0 \cMtil p_0 \;\; , \;\; \|x\| \leq 1 \;\;\text{and}\;\; \|a_i x - x a_i \|_2 \leq \eps \;\; \forall i = 1,\ldots,n\;\; , \\ &\text{then}\;\; \|x-E_{\cM}(x)\|_2 \leq \frac{1}{4} \|p_0\|_2 \; .
\end{split}
\end{equation}
Taking $t$ close enough to $0$, we can make $\|a_i - \al_t(a_i)\|_2$ and $\|p_0 - \al_t(p_0)\|_2$ so small that, using the commutation of $Qp_0$ with $a_1,\ldots,a_n$, we get
$$\|a_i \; p_0 \al_t(b) p_0 - p_0 \al_t(b) p_0 \; a_i \|_2 \leq \eps \quad\text{and}\quad
\|\al_t(b) - p_0 \al_t(b) p_0\|_2 \leq \frac{1}{4}\|p_0\|_2 \quad\text{for all}\;\; b \in \cU(Qp_0) \; .$$
Applying \eqref{eq.twostar} to $x = p_0 \al_t(b) p_0$, we conclude that $\|x-E_{\cM}(x)\|_2 \leq \frac{1}{4} \|p_0\|_2$ and hence,
$$\|\al_t(b) - E_{\cM}(\al_t(b)) \|_2 \leq \frac{3}{4}\|p_0\|_2 \quad\text{for all}\;\; b \in \cU(Qp_0) \; .$$
Put $\rho = \rho_t^2$. For all $b \in \cU(Qp_0)$, we get
$$\tau(p_0) - \tau(b^* (\id \ot \theta_\rho)(b)) = \tau(p_0) - \|(\id \ot \theta_{\rho_t})(b)\|_2^2 = \|\al_t(b) - E_{\cM}(\al_t(b))\|_2^2 \leq \frac{9}{16} \tau(p_0) \; .$$
Hence, \eqref{eq.onestar} holds with $\delta = \frac{7}{16}\tau(p_0)$.
\end{proof}

\begin{remark}
We make the following two observations about Corollary \ref{cor.malleable-rigid}, but we do not use them in the rest of the paper.
In the situation where $Q \subset p (N \ovt M)p$ has the relative property (T), Corollary \ref{cor.malleable-rigid} can be strengthened in two ways. First of all, the same conclusion holds without the assumption that $\Stab i$ is amenable for all $i \in I$. In the relative property (T) part of the proof, we did not use the amenability of $\Stab i$.
Secondly, if we assume that $\Stab i$ is amenable and that $Q \subset P$ has the relative property (T), then it is easy to see that the option $P \embed N \ovt (A \rtimes \Stab i)$ actually implies that $Q \embed N \ot 1$.
\end{remark}

\section{Clustering sequences techniques and intertwining of abelian subalgebras}\label{sec.intertwine-abelian}

Throughout this section assume that $\Gamma \actson I$ is an action of the countable group $\Gamma$ on the countable set $I$. Assume that $\kappa > 0$ such that the stabilizer $\Stab \cF$ is finite whenever $\cF \subset I$ is a subset with $|\cF| \geq \kappa$. Let $(X_0,\mu_0)$ be a non-trivial standard probability space and put $A := \rL^\infty(X_0^I)$, together with the action $\Gamma \actson A$ given by the generalized Bernoulli shift. Define $M = A \rtimes \Gamma$.

We prove a strong structural result for abelian von Neumann subalgebras $D \subset (M \ovt M)^t$ that are normalized by many unitaries in $(\rL \Gamma \ovt \rL \Gamma)^t$. Later we shall apply this structural result to $D = \Delta(A)$ whenever $\Delta : M \recht (M \ovt M)^t$ is (the amplification of) the comultiplication given by another group von Neumann algebra or group measure space decomposition of $M$. This structural result and its proof are very similar to \cite[Theorem 6.1]{Io10}. We give however all the details because the generalization from plain Bernoulli to generalized Bernoulli actions is not totally innocent.
Both here and in \cite{Io10} the technique is very much inspired by the \emph{clustering sequences techniques} from \cite[Sections 1-4]{Po04}. For a more gentle introduction to these matters, we refer to the lecture notes \cite{Va11}.

\begin{theorem}\label{thm.intertwine-abelian}
As above let $\Gamma \actson I$ be such that $\Stab \cF$ is finite whenever $\cF \subset I$ and $|\cF| \geq \kappa$. Put $A := \rL^\infty(X_0^I)$ and $M = A \rtimes \Gamma$.

Assume that $t > 0$ and that $D \subset (M \ovt M)^t$ is an abelian von Neumann subalgebra that is normalized by a group of unitaries $(\gamma(s))_{s \in \Lambda}$ that belong to $(\rL \Gamma \ovt \rL \Gamma)^t$. Denote by $P \subset (M \ovt M)^t$ the quasi-normalizer of $D$ inside $(M \ovt M)^t$. Make the following assumptions.

\begin{enumerate}
\item\label{assum1} $D \not\prec M \ot 1$ and $D \not\prec 1 \ot M$.
\item\label{assum2} For all $i \in I$ we have $P \not\prec M \ovt (A \rtimes \Stab i)$ and $P \not\prec (A \rtimes \Stab i) \ovt M$.
\item\label{assum3} $P \not\prec M \ovt \rL \Gamma$ and $P \not\prec \rL \Gamma \ovt M$.
\item\label{assum4} For all $i \in I$ we have $\gamma(\Lambda)\dpr \not\prec \rL(\Gamma) \ovt \rL(\Stab i)$ and $\gamma(\Lambda)\dpr \not\prec \rL(\Stab i) \ovt \rL(\Gamma)$.
\end{enumerate}

Denote $C := D' \cap (M \ovt M)^t$. Then for every non-zero projection $q \in \cZ(C)$ we have that $Cq \prec A \ovt A$.
\end{theorem}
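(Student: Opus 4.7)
The plan is to argue by contradiction, closely following the scheme of \cite[Theorem 6.1]{Io10} and the clustering sequences technique of \cite[Sections 1--4]{Po04}. Assume that some non-zero projection $q \in \cZ(C)$ satisfies $Cq \notembed A \ovt A$. By Definition \ref{def.intertwine}, one can then choose a sequence of unitaries $w_n \in \cU(Cq)$ such that $\|E_{A \ovt A}(x w_n y)\|_2 \recht 0$ for all $x, y \in (M \ovt M)^t$. Since $D$ is abelian and contained in $C$, we actually have $D \subset \cZ(C)$, so the normalizing group $\gamma(\Lambda)$ of $D$ automatically normalizes $C$, $\cZ(C)$, and, after possibly replacing $q$ by a suitable $\gamma(\Lambda)$-invariant central subprojection, the projection $q$ itself. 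This places us in a setup where conjugates of the $w_n$ by unitaries in $\gamma(\Lambda)\dpr$ again belong to $\cU(Cq)$.

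The main technical input will be a Bernoulli-type clustering sequence extracted from hypothesis (4). Applying Remark \ref{rem.intertwine} to the countable family $\{\rL \Gamma \ovt \rL(\Stab i),\, \rL(\Stab i) \ovt \rL \Gamma : i \in I\}$, I would produce a sequence $v_k \in \cU(\gamma(\Lambda)\dpr)$ with
$$\|E_{\rL \Gamma \ovt \rL(\Stab i)}(a v_k b)\|_2 \recht 0 \quad\text{and}\quad \|E_{\rL(\Stab i) \ovt \rL \Gamma}(a v_k b)\|_2 \recht 0$$
for every $i \in I$ and every $a, b \in (M \ovt M)^t$. Next, expanding any $x \in (M \ovt M)^t$ along the orthogonal decomposition $\rL^2(A) = \bigoplus_{\cF \subset I \text{ finite}} \bigotimes_{i \in \cF}(\rL^2(X_0) \ominus \C 1)$ and its tensor square, one obtains a bi-support labelling of vectors in $A \ovt A$ by pairs $(\cF, \cG)$ of finite subsets of $I$. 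The Fourier modes $u_g \ot u_h$ of $v_k \in \rL \Gamma \ovt \rL \Gamma$ act on these bi-supports by the product action of $\Gamma \times \Gamma$ on $I \times I$; combining the escape property of $v_k$ with the standing hypothesis that $\Stab \cF$ is finite for $|\cF| \geq \kappa$ should yield a generalized-Bernoulli clustering identity, forcing $\|E_{A \ovt A}(x v_k y)\|_2 \recht 0$ whenever at least one of $x, y$ is orthogonal to $\rL \Gamma \ovt \rL \Gamma$, in direct analogy with the estimates used in \cite[Theorem 6.1]{Io10}.

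The last step is to feed the commutation $[w_n, D] = 0$ into this clustering estimate in order to trap the quasi-normalizer $P$ of $D$. Since every $\gamma(s)$ normalizes $D$, products such as $\gamma(s) w_n$ and $v_k w_n v_k^*$ lie in $P$; then a double-limit argument in $n$ and $k$, applied to well-chosen test pairs, should transfer the simultaneous decay $\|E_{A \ovt A}(x w_n y)\|_2 \recht 0$ and the escape of the $v_k$ into a non-trivial intertwining of $P$ into one of $M \ovt (A \rtimes \Stab i)$, $(A \rtimes \Stab i) \ovt M$, $M \ovt \rL \Gamma$, or $\rL \Gamma \ovt M$, contradicting assumptions (2) and (3). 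Hypothesis (1) enters to ensure that the clustering sequence can be constructed symmetrically on both sides and that $D$ itself is not trivially trapped on a single tensor factor. The part I expect to be the main obstacle is the rigorous proof of the clustering identity in the \emph{generalized} Bernoulli setting: in the plain Bernoulli case $I = \Gamma$ treated in \cite{Io10} the action on $I$ is free and the stabilizer bookkeeping is trivial, whereas here one must use assumption (4) quantitatively to suppress the contributions of the (possibly infinite) stabilizers $\Stab i$ and exploit the eventual finiteness of $\Stab \cF$ on large bi-supports to drive these errors to zero in the limit.
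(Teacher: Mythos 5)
There is a genuine gap, both in the opening reduction and in the core mechanism. The reduction is already problematic: you fix a non-zero $q \in \cZ(C)$ with $Cq \notembed A \ovt A$ and propose to pass to a $\gamma(\Lambda)$-invariant central subprojection of $q$; but no non-zero invariant subprojection below $q$ need exist — in the intended application the action $(\Ad \gamma(s))_{s \in \Lambda}$ on $\cZ(C)$ is weakly mixing, so the only invariant projections are $0$ and $1$. The correct way to handle ``for every non-zero $q$'' is a maximality argument: one proves that $C \embed M \ovt A$ (and only afterwards upgrades to $A \ovt A$ by symmetry), and then observes that the maximal projection $q_2 \in \cZ(C)$ under which the conclusion holds commutes with the normalizer of $C$, so that if $q_2 \neq 1$ one may cut everything by $1-q_2$ and repeat.

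More seriously, the engine of the proof is missing. Hypotheses (1)--(3) are not there to be contradicted at the end; they are used positively, through the tensor length deformation of Section \ref{sec.tensorlength} (Theorem \ref{thm.malleable}, applied to a doubled copy of $D$ so as to treat both tensor legs at once), to produce unitaries $a \in \cU(D)$ whose support consists of arbitrarily long tensor words in both factors. The second indispensable ingredient, absent from your sketch, is a height bound: for the escape sequence $v_n = \gamma(g_n)$ extracted from (4) one must prove $h(v_n) \geq \delta > 0$ uniformly in $\rL(\Gamma \times \Gamma)$; this is obtained by playing the long-tensor-length unitaries $a$ against the commutation of $v_n a v_n^*$ with $D$, and it is exactly what allows one to localize $v_n a v_n^*$ simultaneously in the group direction (a fixed finite $S \subset \Gamma$) and in the Bernoulli direction (a translated finite set $h_n \cdot \cF \subset I$). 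The ``clustering identity'' you propose, $\|E_{A \ovt A}(x v_k y)\|_2 \recht 0$, is essentially the estimate that appears only inside the proof of this height bound, under the hypothesis $h(v_n) \recht 0$ which is being refuted — it cannot serve as a standing tool in the way you use it. Finally, the endgame is not an intertwining of $P$ into $M \ovt (A \rtimes \Stab i)$ or $M \ovt \rL\Gamma$ (no mechanism in your double-limit argument would actually produce such an intertwining from the decay of $E_{A \ovt A}(x w_n y)$); it is an internal $\|\cdot\|_2$ blow-up: assuming $C \notembed M \ovt A$ one uses \cite[Theorem 1.3.2]{Io10} for the crossed product $M \ovt M = (M \ovt A) \rtimes \Gamma$ to get unitaries $d \in \cU(C)$ spread out over $\Gamma$, and then constructs inductively mutually orthogonal projections each capturing a fixed amount $\delta$ of the norm of $x_n = v_n a v_n^*$, which is impossible since $\|x_n\|_2$ is bounded.
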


\begin{remark}
To avoid unnecessary notational complexity we did not formulate the obvious more general result for subalgebras of $(M_1 \ovt M_2)^t$ where $M_i = \rL^\infty(X_i^{I_i}) \rtimes \Gamma_i$ and where both $\Gamma_i \actson I_i$ satisfy the finiteness assumption on the stabilizer groups. Also there is an obvious version of the theorem for subalgebras $D \subset M^t$ that are normalized by unitaries $\gamma(s) \in \rL(\Gamma)^t$.
\end{remark}

\begin{proof}
Note that because $D$ is abelian, we have $\cZ(C) = C' \cap (M \ovt M)^t$.

The main part of the proof consists in showing that for every non-zero projection $q \in \cZ(C)$ we have that $C q \prec M \ovt A$. At the end we then deduce that actually $C q \prec A \ovt A$ for every non-zero projection $q \in \cZ(C)$. Consider
\begin{align*}
\cP := \{q_1 \in \cZ(C) \mid \; & q_1 \;\text{is a projection and for every non-zero projection} \\ &\text{$q \in \cZ(C) q_1$ we have that}\; C q \prec M \ovt A \} \; .
\end{align*}
One easily checks that $\cP$ admits a maximum $q_2$ and that this maximum commutes with the normalizer of $C$, in particular with the unitaries $(\gamma(s))_{s \in \Lambda}$ (see \cite[Proposition 2.5]{Va10} for details). We have to prove that $q_2 = 1$. If not, we can replace $D$ by $D(1-q_2)$ and $\gamma(s)$ by $\gamma(s) (1-q_2)$. So in the end, we only need to prove that $\cP$ is non-empty. This means that we have to prove that $C \prec M \ovt A$.

We split the proof of the statement $C \prec M \ovt A$ into several steps. We use the following notation.

We use the letter $Q$ to denote all kind of orthogonal projections related to the infinite tensor product $A = A_0^I$ and the letter $P$ to denote all kind of orthogonal projections related to the group $\Gamma$. All these projections $Q$ and $P$ project onto subspaces of the form $\rL^2(M) \ot \cK$ and they all commute.
\begin{itemlist}
\item For every subset $\cF \subset I$, we denote by $Q_\cF$ the orthogonal projection onto the closed linear span of $\{ M \ot A_0^\cF u_g \mid g \in \Gamma\}$.
\item For every $\ell \in \N$, we denote by $Q^{\sgeq \ell}$ the orthogonal projection onto the closed linear span of $\{ M \ot (A_0 \ominus \C 1)^\cF u_g \mid \cF \subset I, \ell \leq |\cF| < \infty, g \in \Gamma \}$.
\item For every subset $S \subset \Gamma$, denote by $P_S$ the orthogonal projection onto the closed linear span of $\{ M \ot A u_g \mid g \in S\}$.
\end{itemlist}
We denote by $Q^{\sgeq \ell}_\cF$ the product of $Q^{\sgeq \ell}$ and $Q_\cF$.

In general, the projection $Q_\cF$ does not behave well with respect to the operator norm $\| \, \cdot \, \|$. Because of the formula
$$P_S(Q_\cF(x)) = \sum_{g \in S} E_{M \ovt A_0^\cF}(x (1 \ot u_g)^*) \, (1 \ot u_g) \; ,$$
we do get $\|P_S(Q_\cF(x))\| \leq |S| \, \|x\|$ and $\|P_S(x)\| \leq |S| \, \|x\|$ for all $x \in M \ovt M$ and all subsets $\cF \subset I$.

In a few cases, we use the same notation $Q_\cF, Q^{\sgeq \ell}, P_S$ to denote projections of $\rL^2(M)$ onto the corresponding obvious subspaces.

To avoid a too heavy notation, we assume that $t \leq 1$. So we have a projection $p \in \rL (\Gamma \times \Gamma)$ such that $D \subset p (M \ovt M) p$ and $\gamma(s) \in p \rL (\Gamma \times \Gamma) p$. This simplification does not hide any essential part of the argument.

\begin{step}\label{stepa}
For every $\eps > 0$ and every $\ell \in \N$, there exists a unitary $a \in D$ such that $$\|a - (Q^{\sgeq \ell} \ot Q^{\sgeq \ell})(a)\|_2 < \eps \; .$$
\end{step}
\begin{proof}
Denote by $\si : M \ovt M \recht M \ovt M$ the flip automorphism $\si(a \ot b) = b \ot a$. Consider the projection
$$\ptil := \begin{pmatrix} p & 0 \\ 0 & \si(p) \end{pmatrix} \in \M_2(\C) \ovt M \ovt M \; .$$
Define the von Neumann subalgebra $\Dtil \subset \ptil(\M_2(\C) \ovt M \ovt M) \ptil$ given by
$$\Dtil := \Bigl\{ \begin{pmatrix} a & 0 \\ 0 & \si(a) \end{pmatrix} \; \Big| \; a \in D \; \Bigr\} \; .$$
Denote by $\Ptil$ the quasi-normalizer of $\Dtil$ inside $\ptil (\M_2(\C) \ovt M \ovt M) \ptil$. By assumption (\ref{assum1}) we have $\Dtil \not\prec M \ot 1$. By assumption (\ref{assum2}) we have for all $i \in I$ that $\Ptil \not\prec M \ovt (A \rtimes \Stab i)$. By assumption (\ref{assum3}) we have $\Ptil \not\prec M \ovt \rL \Gamma$. We now apply Theorem \ref{thm.malleable}. We conclude that \eqref{eq.boundlength} in Theorem \ref{thm.malleable} cannot hold. So, given $\eps > 0$ and $\ell \in \N$, we find a unitary $b \in \Dtil$ such that $\|d - (1 \ot Q^{\sgeq \ell})(d)\|_2 < \eps / 2$. Writing
$$d = \begin{pmatrix} a & 0 \\ 0 & \si(a) \end{pmatrix}$$
we have found a unitary $a \in D$ such that $\|a - (1 \ot Q^{\sgeq \ell})(a)\|_2 < \eps / 2$ and $\|a - (Q^{\sgeq \ell} \ot 1)(a)\|_2 < \eps / 2$. Hence also
$\|a - (Q^{\sgeq \ell} \ot Q^{\sgeq \ell})(a)\|_2 < \eps$.
\end{proof}

\begin{step}\label{stepb}
There is a sequence of group elements $g_n \in \Lambda$ such that for all $i \in I$ and $g,h \in \Gamma \times \Gamma$, we have
\begin{equation}\label{eq.propgn}
\|E_{\rL (\Gamma \times \Stab i)}(u_g \gamma(g_n) u_h)\|_2 \recht 0 \quad\text{and}\quad  \|E_{\rL(\Stab i \times \Gamma)}(u_g \gamma(g_n) u_h)\|_2 \recht 0\; .
\end{equation}
\end{step}
\begin{proof}
This follows immediately from assumption (\ref{assum4}), Definition \ref{def.intertwine} and Remark \ref{rem.intertwine}.
\end{proof}

From now on, we fix a sequence $(g_n)$ in $\Lambda$ satisfying \eqref{eq.propgn}. We put $v_n := \gamma(g_n)$.

\begin{step}\label{stepc}
For all $x \in M \ovt M$ and all finite subsets $\cF \subset I$, we have
\begin{equation}\label{eq.vn}
\| v_n x v_n^* - Q_{I - \cF}(v_n x v_n^*)\|_2 \recht 0 \; .
\end{equation}
\end{step}
\begin{proof}
It suffices to check \eqref{eq.vn} when $x$ is of the form $x = x_0 \ot a u_g$ with $x_0 \in M$, $g \in \Gamma$ and $a \in A_0^\cG$ for some finite subset $\cG \subset I$. Fix a finite subset $\cF \subset I$. Define
$$K := \{k \in \Gamma \mid k \cG \cap \cF = \emptyset \} \; .$$
Define $w_n = P_K(v_n)$. Then, $w_n \in \rL^2(M \ovt M)$ and by \eqref{eq.propgn}, $\|v_n -w_n\|_2 \recht 0$. Hence, $\| v_n x v_n^* - w_n x v_n^*\|_2 \recht 0$. Since by construction $w_n x v_n^*$ lies in the image of $Q_{I - \cF}$, the formula \eqref{eq.vn} follows.
\end{proof}

\begin{step} \label{stepd}
For all $a \in D$ and all $\eps > 0$, there exists a finite subset $S \subset \Gamma$ such that
$$\|v_n a v_n^* - (P_S \ot P_S)(v_n a v_n^*)\|_2 \leq \eps \quad\text{for all}\;\; n \; .$$
\end{step}
\begin{proof}
Choose a unitary $a \in \cU(D)$ and put $a_n := v_n a v_n^*$. Since the projections $P_S \ot 1$ and $1 \ot P_S$ commute, by symmetry it suffices to prove that for all $\eps>0$, there exists a finite subset $S \subset \Gamma$ such that $\|(1 \ot P_S)(a_n)\|_2 \geq \|p\|_2 - 4\eps$ for all $n$ large enough.

Write $\delta = \eps \|p\|_2$. By step \ref{stepa} take a unitary $b \in \cU(D)$ such that $\|b - Q^{\sgeq \kappa}(b)\|_2 \leq \delta$. By the Kaplansky density theorem, take a finite subset $\cG \subset I$ and an element
$$b_0 \in \lspan \{ x_0 \ot x_1 u_g \mid x_0 \in M, x_1 \in A_0^\cG, g \in \Gamma \}$$
such that $\|b_0\| \leq 1$, $\|b - b_0\|_2 \leq \delta$ and $\|b_0\|_2 \leq \|b\|_2 = \|p\|_2$. Put $\eta = Q^{\sgeq \kappa}(b_0)$ and observe that $\|\eta\|_2 \leq \|b_0\|_2 \leq \|p\|_2$, that $\|b - \eta \|_2 \leq 2 \delta$ and that
$$\eta \in \lspan \{ y_0 \ot y_1 u_h \mid y_0 \in M, y_1 \in (A_0 \ominus \C 1)^J, J \subset \cG, |J| \geq \kappa, h \in \Gamma \} \; .$$
Since $a_n$ and $b$ are commuting unitaries in $p(M \ovt M)p$, we have $\langle a_n \, b , b \, a_n \rangle = \tau(p)$ and hence,
\begin{equation}\label{eq.myeq}
\big| \tau(p) - \langle a_n \, b_0 , \eta \, a_n \rangle \big| \leq 3 \delta
\end{equation}
for all $n$. Put $S := \{g \in \Gamma \mid \; |g \cdot \cG \cap \cG | \geq \kappa \}$. By our assumption on the action $\Gamma \actson I$, the set $S$ is finite.

{\bf Claim.} We have that $\langle P_{\Gamma - S}(a_n) \, b_0 , \eta \, a_n \rangle \recht 0$. Given the special form of $b_0$ and $\eta$, it suffices to prove the claim for $b_0 = x_0 \ot x_1 u_g$ and $\eta = y_0 \ot y_1 u_h$ where $x_0,y_0 \in M$, $x_1 \in A_0^\cG$, $y_1 \in (A_0 \ominus \C1)^J$, $J \subset \cG$, $|J| \geq \kappa$ and $g,h \in \Gamma$.

Put $d_n := Q_{I - (\cG \cup h^{-1} \cG)}(a_n)$. By construction, $\eta d_n$ lies in the closed linear span of
$$M \ot (A_0 \ominus \C 1)^J A_0^{I - \cG} u_k \;\; , k \in \Gamma \; .$$
On the other hand, $P_{\Gamma - S}(d_n) b_0$ lies in the closed linear span of
$$M \ot A_0^{r\cG \cup (I- \cG)} u_k \;\; , r \in \Gamma - S, k \in \Gamma \; .$$
Since $|r \cG \cap J|< \kappa$ for all $r \in \Gamma - S$, the two subspaces are orthogonal. Hence, $\langle P_{\Gamma - S}(d_n)\, b_0 , \eta d_n \rangle = 0$ for all $n$. By step \ref{stepc}, $\|a_n - d_n\|_2 \recht 0$. Hence, the claim follows.

Combining the claim with \eqref{eq.myeq}, we can take $n_0$ such that
$$\big| \tau(p) - \langle P_S(a_n) \, b_0 , \eta \, a_n \rangle \big| \leq 4 \delta$$
for all $n \geq n_0$. It follows that
\begin{equation*}
\tau(p) - 4 \delta \leq |\langle P_S(a_n) \, b_0 , \eta \, a_n \rangle | \leq \|P_S(a_n)\|_2 \, \|b_0\| \, \|\eta\|_2 \, \|a_n\| \leq \|P_S(a_n)\|_2 \, \|p\|_2 \; .
\end{equation*}
Since $\tau(p) - 4 \delta = \|p\|_2 (\|p\|_2 - 4 \eps)$, we have shown that $\|P_S(a_n)\|_2 \geq \|p\|_2 - 4 \eps$ for all $n \geq n_0$.
\end{proof}

Recall from \eqref{eq.defheight} the notion of the height of an element in a group von Neumann algebra. We now use this notion in the group von Neumann algebra $\rL(\Gamma \times \Gamma)$. So, for all $v \in \rL(\Gamma \times \Gamma)$, we consider
$$h(v) = \max\{ |\tau(v u_g^*)| \mid g \in \Gamma \times \Gamma\} \; .$$

\begin{step}\label{stepe}
There exists a $\delta > 0$ such that $h(v_n) \geq \delta$ for all $n$.
\end{step}
\begin{proof}
If the assertion does not hold, we can pass to a subsequence and assume that $h(v_n) \recht 0$.

{\bf Claim.} Take $J_1,J_2 \subset I$ with $|J_i| \geq \kappa$. For all $a \in (A_0 \ominus \C1)^{J_1} \ot (A_0 \ominus \C1)^{J_2}$ and for all sequences $w_n$ in the unit ball of $\rL(\Gamma \times \Gamma)$, we have
$$\|E_{A \ovt A}(v_n a w_n^*)\|_2 \recht 0 \; .$$
To prove the claim, denote by $(v)_g$, $g \in \Gamma \times \Gamma$, the Fourier coefficients of an element $v \in \rL(\Gamma \times \Gamma)$. So, by definition and with $\| \, \cdot \, \|_2$-convergence, we have
$$v = \sum_{g \in \Gamma \times \Gamma} (v)_g u_g \; .$$
Take finite sets $\cF_i \subset \Gamma$ such that for all $g \in \Gamma - \cF_i$, we have $|g \cdot J_i \cap J_i| < \kappa$. Put $\cF = \cF_1 \times \cF_2$. So, whenever $g \in (\Gamma \times \Gamma) - \cF$, we have $a \perp \sigma_g(a)$. As a result, we get
\begin{align*}
\|E_{A \ovt A}(v_n a w_n^*)\|_2^2 &= \sum_{k \in \cF} \sum_{g \in \Gamma \times \Gamma} (v_n)_g \; \overline{(v_n)_{gk}} \; \overline{(w_n)_g} \; (w_n)_{gk} \; \tau(a \sigma_k(a^*)) \\
& \leq \|a\|_2^2 \; h(v_n)^2 \; \sum_{k \in \cF} \sum_{g \in \Gamma \times \Gamma} |(w_n)_g| \; |(w_n)_{gk}| \\
& \leq \|a\|_2^2 \; h(v_n)^2 \; \sum_{k \in \cF} \left( \Bigl( \sum_{g \in \Gamma \times \Gamma} |(w_n)_g|^2 \Bigr)^{1/2} \; \Bigl( \sum_{g \in \Gamma \times \Gamma} |(w_n)_{gk}|^2 \Bigr)^{1/2}\right) \\
& \leq \|a\|_2^2 \; |\cF| \; h(v_n)^2 \recht 0 \; .
\end{align*}
This proves the claim. Applying the claim to $w_n$ of the form $w_n = u_g v_n u_h^*$, we get the following:
for all $\eta \in \rL^2(M \ovt M)$ satisfying $\eta = (Q^{\sgeq \kappa} \ot Q^{\sgeq \kappa})(\eta)$ and for all finite subsets $S \subset \Gamma$, we have
\begin{equation}\label{eq.aaa}
\|(P_S \ot P_S)(v_n \eta v_n^*) \|_2 \recht 0 \; .
\end{equation}
By step \ref{stepa} take a unitary $a \in \cU(D)$ such that $\|a - (Q^{\sgeq \kappa} \ot Q^{\sgeq \kappa})(a)\|_2 \leq \|p\|_2/2$. Formula \eqref{eq.aaa} implies that for all $S \subset \Gamma$ finite, we have
$$\limsup_n \|(P_S \ot P_S)(v_n a v_n^*)\|_2 \leq \|p\|_2/2 \; .$$
This is a contradiction with step \ref{stepd}.
\end{proof}

\begin{step}\label{stepf}
Take $\delta > 0$ such that $h(v_n) \geq \sqrt{6\delta}$ for all $n$.

For every $\eps > 0$, there exists a unitary $a \in \cU(D)$, finite subsets $S \subset \Gamma$, $\cF \subset I$ and a sequence $h_n \in \Gamma$ such that, writing $x_n = v_n a v_n^*$, we have for all $n$,
\begin{itemlist}
\item $\|x_n - P_S(x_n)\|_2 \leq \eps$,
\item $\|x_n - Q^{\sgeq \kappa}(x_n)\|_2 \leq \eps$,
\item $\|x_n - Q_{h_n \cdot \cF}(x_n)\|_2 \leq \|p\|_2 - 2 \delta$,
\item $\dis \|x_n - Q_{I - \cG}(x_n)\|_2 \recht 0$ for every finite subset $\cG \subset I$.
\end{itemlist}
\end{step}

\begin{proof}
Choose $\eps > 0$. By step \ref{stepa} take $a \in \cU(D)$ such that $\|a - Q^{\sgeq \kappa}(a)\|_2 \leq \eps$. Put $x_n = v_n a v_n^*$. Since the image of $Q^{\sgeq \kappa}$ is an $(M \ovt \rL(\Gamma))$-$(M \ovt \rL(\Gamma))$-bimodule, we have
$$\|x_n - Q^{\sgeq \kappa}(x_n)\|_2 = \|a - Q^{\sgeq \kappa}(a)\|_2 \leq \eps$$
for all $n$. By step \ref{stepd}, take a finite subset $S \subset \Gamma$ such that $\|x_n - P_S(x_n)\|_2 \leq \eps$ for all $n$. By step \ref{stepc}, we have $\|x_n - Q_{I - \cG}(x_n)\|_2 \recht 0$ for every finite subset $\cG \subset I$.

Take a finite subset $\cF \subset I$ such that $\|a - Q_\cF(a)\|_2 \leq \delta$. Choose elements $k_n \in \Gamma \times \Gamma$ such that $|\tau(v_n u_{k_n}^*)| \geq \sqrt{6\delta}$ for all $n$. Denote by $h_n \in \Gamma$ the second component of $k_n$.

Denote $w_n = \tau(v_n u_{k_n}^*) u_{k_n}$ and $y_n = w_n a v_n^*$. It follows that
$$\|x_n - Q_{h_n \cdot \cF}(x_n)\|_2 = \|(1-Q_{h_n \cdot \cF})(x_n - y_n) + (1-Q_{h_n \cdot \cF})(y_n)\|_2 \leq \|x_n - y_n\|_2 + \|y_n - Q_{h_n \cdot \cF}(y_n)\|_2 \; .$$
We consecutively get
$$\|x_n - y_n \|_2 \leq \|v_n - w_n\|_2 \leq \sqrt{\|p\|_2^2 - 6 \delta} \leq \|p\|_2 - 3 \delta$$
and
$$\|y_n - Q_{h_n \cdot \cF}(y_n)\|_2 = \| w_n (a - Q_{\cF}(a)) v_n^* \|_2 \leq \|a - Q_\cF(a)\|_2 \leq \delta \; .$$
Altogether we have $\|x_n - Q_{h_n \cdot \cF}(x_n)\|_2 \leq \|p\|_2 - 2\delta$.
\end{proof}

We finally gathered enough results to prove that $C \prec M \ovt A$.

\begin{step}
We have that $C \prec M \ovt A$.
\end{step}

\begin{proof}
Assume that $C \not\embed M \ovt A$. Note that $M \ovt M = (M \ovt A) \rtimes \Gamma$, where $\Gamma$ acts trivially on $M$. By \cite[Theorem 1.3.2]{Io10}, for every $\eps > 0$ and every $k \in \N$, there exists a unitary $d \in \cU(C)$ such that $\|P_\cG(d)\|_2 < \eps$ for all subsets $\cG \subset \Gamma$ with $|\cG| \leq k$.

Take $a \in \cU(D)$, finite subsets $S \subset \Gamma$, $\cF \subset I$ and a sequence $h_n \in \Gamma$ satisfying the conclusion of step \ref{stepf} with $\eps \leq \delta/8$. Whenever $Z \subset \Gamma$ is finite, we define the orthogonal projection
$$R_Z = \bigvee_{g \in Z} Q_{g \cdot \cF} \; .$$

{\bf Claim.} Whenever $Z_n$ is a sequence of finite subsets of $\Gamma$ such that $\sup_n |Z_n| < \infty$, there exists a sequence of larger finite subsets $Z_n' \supset Z_n$ such that $\sup_n |Z_n'| < \infty$ and
\begin{equation}\label{eq.goal}
\liminf_n \| R_{Z_n'}(x_n) - R_{Z_n}(x_n)\|_2 \geq \delta \; .
\end{equation}

Once the claim is proven, we inductively construct $Z_n^1 \subset Z_n^2 \subset \cdots$. Since the vectors $R_{Z_n^{k+1}}(x_n) - R_{Z_n^k}(x_n)$ are orthogonal for different $k$, we arrive at the contradiction
$$\|p\|_2 = \liminf_n \|x_n\|_2^2 \geq k \delta^2 \quad\text{for all}\;\; k \in \N \; .$$

We now prove the claim. Let the sequence $Z_n$ be given.
For every $n$, denote $$L_n := \{g \in \Gamma \mid \exists k \in Z_n \;\;\text{such that}\;\; |g h_n \cF \cap k \cF| \geq \kappa\} \; .$$
Since $\Stab J$ is finite whenever $|J| \geq \kappa$, it follows that $\sup_n |L_n| < \infty$. So, we can take a unitary $d \in \cU(C)$ such that $\|P_{L_n}(d)\|_2 \leq \eps/(2|S|)$ for every $n$. Take a finite set $S'\subset \Gamma$ such that $\|d - P_{S'}(d)\|_2 \leq \eps/(2|S|)$. Put $K_n = S'-L_n$. We retain that $\|d - P_{K_n}(d)\|_2 \leq \eps/|S|$ for all $n$ and that $|g h_n \cF \cap k \cF| < \kappa$ for all $g \in K_n$ and all $k \in Z_n$. Put $Z_n' = K_n h_n \cup Z_n$. We prove that $Z_n'$ satisfies \eqref{eq.goal}.

Using the Kaplansky density theorem, take a finite subset $\cG \subset I$ and $d_0 \in M \ovt M$ such that $d_0 = Q_{\cG}(d_0)$, $\|d_0\|\leq 1$ and $\|d-d_0\|_2 \leq \eps/|S|$. Write $d_n := P_{K_n}(d_0)$. Hence, $\|d-d_n\|_2 \leq 2\eps/|S|$. Also write $x_n' := Q_{h_n \cdot \cF}(P_S(x_n)) = P_S(Q_{h_n \cdot \cF}(x_n))$. Note that $\|x'_n\| \leq |S|$ and $\|x_n - x'_n\|_2 \leq \|p\|_2-2\delta+\eps$ for all $n$. As a result,
$$\|dx_n - d_n x_n'\|_2 \leq \|d\| \, \|x_n - x_n'\|_2 + \|x_n'\| \, \|d-d_n\|_2 \leq \|p\|_2 -2\delta + 3 \eps \; .$$
Define the orthogonal projection
$$R_n := \bigvee_{g \in K_n} Q_{\cG \cup gh_n \cF} \; .$$
Since $d_n x_n'$ lies in the image of $R_n$, it follows that $\|(1-R_n)(d x_n)\|_2 \leq \|p\|_2-2\delta + 3\eps$. But $dx_n = x_n d$. Hence, $\|R_n(x_n d)\|_2 \geq 2\delta - 3\eps$.

Observe that
$$\|x_n d - P_S(x_n) d_0 \|_2 \leq \|x_n - P_S(x_n)\|_2\, \|d\| + \|P_S(x_n)\| \, \|d - d_0\|_2 \leq 2 \eps \; .$$
So, $\|R_n(P_S(x_n) d_0)\|_2 \geq 2 \delta - 5 \eps$. Write
$$R'_n := \bigvee_{g \in K_n} Q_{\cG \cup S \cG \cup g h_n \cF} \; .$$
Since $R_n \leq R_n'$, we have $\|R'_n(P_S(x_n) d_0)\|_2 \geq 2 \delta - 5 \eps$. But, $R'_n(P_S(x_n) d_0) = R'_n(P_S(x_n)) d_0$ and $\|d_0\| \leq 1$. It follows that
$$\|R'_n(x_n)\|_2 \geq \|P_S(R'_n(x_n))\|_2 = \|R'_n(P_S(x_n))\|_2 \geq \|R'_n(P_S(x_n)) d_0\|_2 = \|R'_n(P_S(x_n) d_0)\|_2 \geq 2 \delta - 5 \eps \; .$$
Since $\|x_n - Q^{\sgeq \kappa}(x_n)\|_2 \leq \eps$ and since $\|x_n - Q_{I - (\cG \cup S \cG)}(x_n)\|_2 \recht 0$, we can take $n_0$ such that
$$\|R\dpr_n(x_n)\|_2 \geq 2\delta - 7 \eps \quad\text{for all}\;\; n \geq n_0 \quad\text{where}\;\; R\dpr_n := \bigvee_{g \in K_n} Q_{g h_n \cF}^{\sgeq \kappa} \; .$$
Whenever $g \in K_n$ and $k \in Z_n$, we have $|g h_n \cF_n \cap k \cF| < \kappa$. So, the projections $Q^{\sgeq \kappa}_{g h_n \cF}$ and $Q_{k \cF}$ have orthogonal ranges. Hence, $R\dpr_n$ and $R_{Z_n}$ are orthogonal as well. By construction $R_n\dpr \leq R_n'$. It follows that
$$\|R_{Z_n'}(x_n) - R_{Z_n}(x_n)\|_2 \geq \|R\dpr_n(R_{Z_n'}(x_n) - R_{Z_n}(x_n))\|_2 = \|R\dpr_n(x_n)\|_2 \geq 2\delta - 7 \eps \geq \delta$$
for all $n \geq n_0$. So, we have proven \eqref{eq.goal}.
\end{proof}

\begin{step}
End of the proof of Theorem \ref{thm.intertwine-abelian}.
\end{step}

We have shown that $C q \prec M \ovt A$ for every non-zero projection $q \in \cZ(C) = C' \cap p (M \ovt M) p$. This means that the following holds (see \cite[Lemma 2.4 and Proposition 2.5]{Va10} for details): for every $\eps > 0$ there exists a finite set $S \subset \Gamma$ such that $\|d-(1 \ot P_S)(d)\|_2 \leq \eps / 2$ for every unitary $d \in \cU(C)$. By symmetry, we also find a finite set $S' \subset \Gamma$ such that $\|d - (P_{S'} \ot 1)(d)\|_2 \leq \eps /2$ for all $d \in \cU(C)$. Taking the union of $S$ and $S'$, we have found a finite set $S \subset \Gamma$ such that $\|d - (P_S \ot P_S)(d)\|_2 \leq \eps$ for all $d \in \cU(C)$. This means that $Cq \prec A \ovt A$ for all non-zero projections $q \in \cZ(C)$.
\end{proof}

\section{A conjugacy criterion for group actions}\label{sec.conjugacy-actions}

Suppose that we are given an embedding of group measure space factors $B \rtimes \Lambda \hookrightarrow A \rtimes \Gamma$ such that $B = A$ and such that $v \rL \Lambda v^* \subset \rL \Gamma$ for some unitary $v \in A \rtimes \Gamma$. Under the right conditions, one can deduce from this information the existence of a unitary $w \in A \rtimes \Gamma$ such that $w B w^* = A$ and $w v_s w^* = \omega(s) u_{\delta(s)}$ for all $s \in \Lambda$, where $\delta : \Lambda \recht \Gamma$ is a group morphism and $\omega : \Lambda \recht \T$ is a character. Such a result was first proven in \cite[Theorem 5.2]{Po04} and generalized in \cite[Theorem 7.1]{Io10}. We now prove a further generalization, involving arbitrary amplifications and weaker assumptions. We give a more elementary proof in the spirit of \cite[Proposition 9.3]{Va06}.

\begin{theorem}\label{thm.conjugacy-actions}
Let $\Gamma \actson (X,\mu)$ be a free ergodic p.m.p.\ action. Put $A = \rL^\infty(X)$ and $M = A \rtimes \Gamma$. Let $p \in \M_n(\C) \ot \rL \Gamma$ be a projection. Assume that $C \subset p(\M_n(\C) \ot M)p$ is a von Neumann subalgebra and $\gamma : \Lambda \recht \cU(p(\M_n(\C) \ot \rL \Gamma)p)$ a group morphism such that the following conditions hold.
\begin{enumlist}
\item $C \embed A$ and $C' \cap p(\M_n(\C) \ot M)p = \cZ(C)$.
\item The unitaries $\gamma(s)$ normalize $C$ and the action $(\Ad \gamma(s))_{s \in \Lambda}$ on $\cZ(C)$ is weakly mixing.
\end{enumlist}
Then there exist
\begin{itemlist}
\item a subgroup $\Gamma_1 < \Gamma$, a finite normal subgroup $K \lhd \Gamma_1$ and a finite-dimensional unitary representation $\rho : K \recht \cU(\M_d(\C))$ with corresponding projection $p_K := |K|^{-1} \sum_{k \in K} \rho(k) \ot u_k$,
\item a group homomorphism $\delta : \Lambda \recht \cG / L$ where
$$\cG := \{u \ot u_g \mid u \in \cU(\M_d(\C)) \; , \; g \in \Gamma_1 \; , \; \rho(gkg^{-1}) = u \rho(k) u^* \; \forall k \in K \}$$
and where the normal subgroup $K \cong L \lhd \cG$ is given by $L := \{\rho(k) \ot u_k \mid k \in K \}$,
\item a $\Gamma_1$-invariant projection $q \in A$,
\item a partial isometry $v \in \M_{n,d}(\C) \ot \rL \Gamma$ with $vv^* = p$ and $v^* v$ commuting with $\delta(\Lambda) L \subset \cG$,
\end{itemlist}
such that the composition of $\delta$ and the quotient homomorphism $\cG/L \recht \Gamma_1/K$ is surjective and such that
$w := \tau(q)^{-1/2} v(1 \ot q)$ is a partial isometry with left support $p$ and right support $p_K(1 \ot q)$ satisfying
$$w^* C w = (\M_d(\C) \ot A q)^{\Ad L} \, p_K \quad\text{and}\quad w^* \gamma(s) w = \delta(s) p_K (1 \ot q)\;\;\text{for all}\;\; s \in \Lambda \; .$$
\end{theorem}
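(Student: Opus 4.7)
The plan is to first intertwine $C$ into $A$ with full left support $p$ using condition (1), then expand the transported unitaries $\tilde\gamma(s) := v^* \gamma(s) v$ via Fourier analysis in $\M_d(\C) \ot M = (\M_d(\C) \ot A) \rtimes \Gamma$, and finally apply the weak mixing in condition (2) to collapse this Fourier expansion onto a single coset structure, out of which $\Gamma_1$, $K$, $\rho$, $\delta$ and $q$ will be read off. Concretely, since $C \embed_M A$, Definition \ref{def.intertwine} yields a projection $q_0 \in \M_d(\C) \ot A$, a $*$-homomorphism $\phi : C \recht q_0(\M_d(\C) \ot A)q_0$ and a non-zero partial isometry $v \in (\M_{n,d}(\C) \ot M)q_0$ with $cv = v \phi(c)$ for all $c \in C$. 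The hypothesis $C' \cap p(\M_n(\C) \ot M)p = \cZ(C)$ places $vv^*$ in $\cZ(C) \subseteq C$, so a standard maximality argument (iteratively applying intertwining to $(p - vv^*)C(p - vv^*)$ and gluing orthogonal partial isometries) lets me achieve $vv^* = p$. Setting $B := \phi(C)$, the subalgebra $B \subset q_0(\M_d(\C) \ot A)q_0$ inherits $B' \cap q_0(\M_d(\C) \ot M)q_0 = \cZ(B)$; the transported unitaries $\tilde\gamma(s)$ lie in $q_0(\M_d(\C) \ot M)q_0$, normalize $B$, and still act weakly mixingly on $\cZ(B)$.

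Writing $\M_d(\C) \ot M = (\M_d(\C) \ot A) \rtimes \Gamma$, I would next expand $\tilde\gamma(s) = \sum_{g \in \Gamma} \tilde\gamma(s)_g (1 \ot u_g)$ with coefficients $\tilde\gamma(s)_g \in \M_d(\C) \ot A$. The normalization of $B$ by $\tilde\gamma(s)$ forces each non-zero coefficient to implement a partial isomorphism of the spectrum of $\cZ(B)$ compatible with the action of $g$ on $X$. Weak mixing now does the essential work: if two distinct cosets $g_1 K$ and $g_2 K$ both appeared in the Fourier support $\{g : \tilde\gamma(s)_g \neq 0\}$ as $s$ varies over $\Lambda$, averaging against the $\Lambda$-action would produce a non-scalar $\Lambda$-invariant element in $\cZ(B)$, contradicting weak mixing. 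Hence all surviving cosets fit into $\Gamma_1/K$ for a single finite subgroup $K < \Gamma_1 < \Gamma$. The matrix twist carried by $\tilde\gamma(s)$ inside its coset defines the unitary representation $\rho : K \recht \cU(\M_d(\C))$, with associated averaging projection $p_K := |K|^{-1} \sum_{k \in K} \rho(k) \ot u_k$; normality $K \lhd \Gamma_1$ follows from multiplicativity of $s \mapsto \tilde\gamma(s)$, and the $\Gamma_1$-invariant projection $q \in A$ is the essential support of $B$ on $X$.

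Finally, define $\delta : \Lambda \recht \cG/L$ by sending $s$ to the unique surviving (coset, matrix-twist) pair of $\tilde\gamma(s)$ and set $w := \tau(q)^{-1/2} v(1 \ot q)$. The normalization factor is precisely calibrated so that $ww^* = p$ and $w^*w = p_K(1 \ot q)$, after which the conjugation identities $w^* C w = (\M_d(\C) \ot Aq)^{\Ad L} p_K$ and $w^* \gamma(s) w = \delta(s) p_K(1 \ot q)$ drop out by direct computation, with the surjectivity of $\delta$ onto $\Gamma_1/K$ following from the construction of $\Gamma_1$ as the union of occurring cosets. The principal obstacle is the coset-collapse step: simultaneously extracting the finite-dimensional twist data $(K, \rho)$ from the Fourier coefficients and certifying normality, $\Gamma_1$-invariance, and surjectivity requires a delicate interplay between the weak mixing of $\cZ(B)$ and the trivial-commutant hypothesis, together with careful bookkeeping of how the partial isomorphisms of the spectrum of $\cZ(B)$ fit into the orbit structure of $\Gamma \actson X$.
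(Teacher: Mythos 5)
There are two genuine gaps. First, your Step 1 does not deliver what the rest of the argument needs. The intertwining relation $cv = v\phi(c)$ only gives $v^*Cv = v^*v\,\phi(C)$ for some $*$-homomorphism $\phi$ with a priori small image inside a corner of $\M_d(\C)\ot A$; it does not give the exact identification of $C$ with a ``full'' algebra $\M_d(\C)\ot Bq_1$, nor does it identify the integer $d$ of the statement. In the paper, $d$ comes from showing $C\cong \M_d(\C)\ot\cZ(C)$ (using that $C\embed A$ forces a type I summand and that the weakly mixing, hence ergodic, action of $\Lambda$ on $\cZ(C)$ forces homogeneity), and then the hypothesis $C'\cap p(\M_n(\C)\ot M)p=\cZ(C)$ makes $\cZ(C)e$ a semi-regular maximal abelian subalgebra of a corner, so that Popa's conjugacy theorem for maximal abelian subalgebras (\cite[Theorem A.1]{Po01b}) produces a partial isometry $V$ with $VV^*=p$, $V^*V=1\ot q_1$ and the \emph{equality} $V^*CV=\M_d(\C)\ot Bq_1$. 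Your maximality argument gives $vv^*=p$ but not this structural equality, and without it neither the normalizer analysis nor the final formula $w^*Cw=(\M_d(\C)\ot Aq)^{\Ad L}p_K$ can be set up.

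Second, and more fundamentally, the ``coset-collapse via Fourier support'' step is not the right mechanism and is false as you state it: the union over $s\in\Lambda$ of the Fourier supports of $v^*\gamma(s)v$ is typically all of an \emph{infinite} group $\Gamma_1$, so nothing collapses to a single coset of a finite subgroup, and no invariant element of $\cZ(B)$ is produced by ``averaging''. What must actually be proved is that each $v^*\gamma(s)v$, which a priori is an arbitrary unitary normalizing $\M_d(\C)\ot Aq_1$ (hence of the form $w_0(1\ot w_1)$ with $w_1$ coming from the full group of the orbit equivalence relation), becomes, after a further conjugation, a single group-like element $u\ot u_g$ modulo a compact subgroup, times a projection. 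The paper does this by encoding the normalizers as a $1$-cocycle $\omega_1:\Lambda\times Y\recht\cG_1$ for the weakly mixing action $\Lambda\actson Y$ with values in the Polish group $\cG_1=\{u\ot u_g\}$, and applying the untwisting lemma \cite[Lemma 5.4]{PV08} twice: once to the equivariant map $y\mapsto\vphi(y)\vphi(y)^*$ and once to $y\mapsto\psi_1(y)\cG_1$ in the Polish space of $\cG_1$-orbits of partial isometries. This is also exactly where the partial isometry $v$ is produced \emph{inside} $\M_{n,d}(\C)\ot\rL\Gamma$, as the statement requires; the $v$ you construct from intertwining only lies in $\M_{n,d}(\C)\ot M$, and nothing in your argument moves it into matrices over $\rL\Gamma$. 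The data $L$, $K$, $\rho$, $\cG$, $\delta$, the $\Gamma_1$-invariance of $q$, and the support computation for $w$ all come out of this untwisting plus explicit Fourier computations with $p_1=v^*v$; they do not ``drop out'' of the setup you describe.
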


\begin{proof}
Define the automorphism $\beta_s \in \Aut(C)$ as $\beta_s = \Ad \gamma(s)$.
Since $C \embed A$, the von Neumann algebra $C$ has a direct summand that is finite of type I. Since $(\be_s)_{s \in \Lambda}$ is ergodic on $\cZ(C)$, we find an integer $d$ such that $C \cong \M_d(\C) \ot \cZ(C)$. So, we can take matrix units $(e_{ij})_{i,j=1,\ldots,d}$ in $C$ with $e := e_{11}$ satisfying $e C e = \cZ(C) e$. By construction $\cZ(C) e$ is a maximal abelian subalgebra of $e (\M_n(\C) \ot M)e$ that is semi-regular: the normalizer of $\cZ(C) e$ acts ergodically on $\cZ(C) e$. Also $\cZ(C) e \embed A$.

Denote by $\D_m(\C) \subset \M_m(\C)$ the subalgebra of diagonal
matrices. Take an integer $m$ and a projection $q_1 \in \D_m(\C) \ot
A$ such that $(\Tr \ot \tau)(q_1) = (\Tr \ot \tau)(e)$. Write $B :=
\D_m(\C) \ot A$. By \cite[Theorem A.1]{Po01b}, we find $V_1 \in
\M_{n,m}(\C) \ot M$ such that $V_1 V_1^* = e$, $V_1^* V_1 = q_1$ and
$V_1^* \cZ(C) e V_1 = B q_1$. Put the elements $V_i = e_{i1} V_1$,
$i=1,\ldots,d$ next to each other, yielding
$$V \in \M_{n,d m}(\C) \ot M \quad\text{such that}\quad V V^* = p \; , \;\; V^* V = 1 \ot q_1 \;\;\text{and}\;\; V^* C V = \M_d(\C) \ot Bq_1 \; .$$

For every $s \in \Lambda$, the unitary $V^* \gamma(s) V \in \M_d(\C) \ot q_1 (\M_m(\C) \ot M)q_1$ normalizes $\M_d(\C) \ot Bq_1$. One can describe as follows all unitaries $w \in \M_d(\C) \ot q_1(\M_m(\C) \ot M)q_1$ normalizing $\M_d(\C) \ot Bq_1$. Then, $w$ also normalizes $1 \ot B q_1$ and we define the automorphism $\beta_w$ of $B q_1$ given by $1 \ot \beta_w(b) = w(1 \ot b)w^*$.
Denote by $e_1,\ldots,e_m$ the standard minimal projections in $\D_m(\C)$.
Write $q_1 = \sum_{k=1}^m e_k \ot q_k$. For all $k,l \in \{1,\ldots,m\}$ and $g \in \Gamma$ we find a projection $q^{k,g}_l \in A q_l$ such that
$$\sum_{k=1}^m \sum_{g \in \Gamma} q^{k,g}_l = q_l \quad\text{and}\quad \beta_w(e_l \ot a q^{k,g}_l) = e_k \ot \si_g(a q^{k,g}_l) \;\;\forall a \in A \; .$$
It follows that
$$w_1 := \sum_{k,l=1}^m \sum_{g \in \Gamma} e_{kl} \ot u_g q^{k,g}_l$$
is a unitary element in $q_1(\M_m(\C) \ot M) q_1$ satisfying $\beta_w(b) = w_1 b w_1^*$ for all $b \in B q_1$. It follows that $w_0 := w (1 \ot w_1^*)$ commutes with $1 \ot Bq_1$ and hence belongs to $\cU(\M_d(\C) \ot Bq_1)$. By construction, $w = w_0 (1 \ot w_1)$.

Define $X_m = X \sqcup \cdots \sqcup X$ as the disjoint union of $m$ copies of $X$. Identify $\rL^\infty(X) = B$. Let $Y \subset X_m$ be the support of the projection $q_1$.
Define the closed subgroup $\cG_1 \subset \cU(\M_d(\C) \ot \rL(\Gamma))$ given by
$$\cG_1 := \{ u \ot u_g \mid u \in \cU(\M_d(\C)) \;\;\text{and}\;\; g \in \Gamma \} \; .$$
We can view $w_0$ as a measurable function from $Y$ to $\cU(\M_d(\C))$. We then denote by $\Om_w : Y \recht \cG_1$ the measurable function given by
$$\Om_w(y) = w_0(y) \ot u_h \quad\text{whenever $y$ belongs to the support of}\;\; e_k \ot \si_h(q^{k,h}_l) = \beta_w(e_l \ot q^{k,h}_l) \; .$$

To make computations easier, we provide an alternative description of $\Om_w$. Define the Hilbert space $\cK := \M_{n,d m}(\C) \ot \rL^2(M)$ that we view as an $(\M_n(\C) \ot M)$-$(\M_{d m}(\C) \ot M)$-bimodule. Define the Hilbert space $\cH := \M_{n,d}(\C) \ovt \ell^2(\Gamma) \ovt \rL^2(B)$ that we view as an $(\M_n(\C) \ovt \rL \Gamma \ovt B)$-$(\M_d(\C) \ovt \rL \Gamma \ovt B)$-bimodule. Define the unitary operator
$$\eta : \cK \recht \cH : \eta(e_{i,jk} \ot u_g a) = e_{ij} \ot \delta_g \ot (e_k \ot a) \quad\text{for all indices}\;\; i,j,k, \;  g \in \Gamma, a \in A \; .$$
Viewing $\M_n(\C) \ot \rL \Gamma \subset \M_n(\C) \ot M$ and $\M_d(\C) \ot B \subset \M_{d m}(\C) \ot M$, we have for all $\xi \in \cK$ the obvious formulae
$$\eta(a \xi) = (a \ot 1) \xi \;\;\text{when}\;\; a \in \M_n(\C) \ot \rL \Gamma, \quad\text{and}\quad \eta(\xi b) = \eta(\xi) b_{13} \;\;\text{when}\;\; b \in \M_{d}(\C) \ot B \; .$$
An elementary computation yields
\begin{equation}\label{eq.goodformula}
\eta(\xi) \Om_w = (\id \ot \id \ot \beta_w)\eta(\xi w) \quad\text{for all}\quad \xi \in \cK(1 \ot q_1) \; .
\end{equation}
The following $1$-cocycle relation is then an immediate consequence.
\begin{equation}\label{eq.cocycle}
\Omega_{wv} =  \Omega_w \, (\id \ot \id \ot \beta_w)(\Omega_v) \quad\text{when both}\;\; w,v \;\;\text{normalize}\;\; \M_d(\C) \ot Bq_1 \; .
\end{equation}

Define for $s \in \Lambda$, $w_s := V^* \gamma(s) V$. Since $\M_d(\C) \ot Bq_1 = V^* C V$, the unitaries $w_s$ normalize
$\M_d(\C) \ot Bq_1$. So we can define the action $(\beta_s)_{s \in \Lambda}$ on $Bq_1$ given by $\beta_s = \beta_{w_s}$. We denote by $s * y$, $s \in \Lambda$, $y \in Y$, the corresponding action of $\Lambda$ on $Y$. By assumption, $\Lambda \actson Y$ is weakly mixing.

Thanks to the construction above, we can define the measurable function $\omega_1 : \Lambda \times Y \recht \cG_1$ given by $\omega_1(s,y) = \Omega_{w_s}(s * y)$. The $1$-cocycle relation \eqref{eq.cocycle} now becomes
$$\omega_1(st,y) = \omega_1(s,t * y) \; \omega_1(t,y) \quad\text{for all}\;\; s,t \in \Lambda \;\;\text{and almost every}\;\; y \in Y \; .$$
Hence, $\omega_1$ is a $1$-cocycle for the action $\Lambda \actson Y$ with values in $\cG_1$. Define the vector
$$\vphi \in \M_{n,d}(\C) \ovt \ell^2(\Gamma) \ovt \rL^2(Bq_1) \quad\text{given by}\quad \vphi = \eta(V) \; .$$
View $\vphi$ as a measurable function from $Y$ to $\M_{n,d}(\C) \ot \ell^2(\Gamma)$ and view the latter as an $(\M_n(\C) \ot \rL\Gamma)$-$(\M_d(\C) \ot \rL \Gamma)$-bimodule. By definition $p V = V$ and $\gamma(s) V = V w_s$ for all $s \in \Lambda$. The properties of $\eta$ imply that $p \vphi(y) = \vphi(y)$ a.e.\ and that $\eta(\gamma(s) V)$ equals a.e.\ the function given by $y \mapsto \gamma(s) \vphi(y)$. By \eqref{eq.goodformula} we have that $\eta(V w_s)$ equals a.e.\ the function given by $y \mapsto \vphi(s * y) \omega_1(s,y)$. So, we conclude that
$$\vphi : Y \recht p(\M_{n,d}(\C) \ot \ell^2(\Gamma)) \quad\text{and}\quad \gamma(s) \vphi(y) = \vphi(s * y) \omega_1(s,y) \quad\text{a.e.}$$
From now on, identify $p(\M_{n,d}(\C) \ot \ell^2(\Gamma)) = p \rL^2(\M_{n,d}(\C) \ot \rL \Gamma)$. So, we can define $P(y) := \vphi(y) \vphi(y)^*$ as an element in $p \rL^1(\M_n(\C) \ot \rL \Gamma) p$. We have $P(s * y) = \gamma(s) P(y) \gamma(s)^*$. Since $\Lambda \actson Y$ is weakly mixing, \cite[Lemma 5.4]{PV08} implies that $P$ is essentially constant. So, we have found an element $P \in p \rL^1(\M_n(\C) \ot \rL \Gamma) p$ such that $P(y) = P$ a.e. We claim that $P = (\Tr \ot \tau)(q_1)^{-1} p$. Indeed, for an arbitrary projection $f \in p(\M_n(\C) \ot \rL \Gamma)p$, we get
\begin{align*}
(\Tr \ot \tau)(f) &= \langle f V, V \rangle = \langle \eta(f V),\eta(V) \rangle = \int_Y \langle f \vphi(y),\vphi(y) \rangle \; d \rho(y) \\
& = \int_Y (\Tr \ot \tau)(f P) \; d\rho(y) = (\Tr \ot \tau)(fP) \; (\Tr \ot \tau)(q_1) \; .
\end{align*}
Since this holds for all projections $f$, the claim follows.

Define $\psi_1(y) := (\Tr \ot \tau)(q_1)^{1/2} \vphi(y)$. Denote by $\cI$ the set of all partial isometries in $\M_{n,d}(\C) \ot \rL \Gamma$ with left projection equal to $p$. So, $\psi_1 : Y \recht \cI$ and $\psi_1$ satisfies
$$\gamma(s) \psi_1(y) = \psi_1(s * y) \omega_1(s,y) \quad\text{a.e.}$$
The $\|\, \cdot\,\|_2$-distance turns $\cI$ into a Polish space on which $\cU(p(\M_n(\C) \ot \rL \Gamma)p)$ acts by left multiplication and $\cG_1$ by right multiplication. Both actions are isometric. The action of $\cG_1$ on $\cI$ by right multiplication is proper, so that the set $\cI/\cG_1$ of $\cG_1$-orbits equipped with the distance between orbits is still a Polish space on which $\cU(p(\M_n(\C) \ot \rL \Gamma)p)$ acts isometrically. Since $\psi_1(s * y) \cG_1 = \gamma(s) \psi_1(y) \cG_1$ a.e.\ and since $\Lambda \actson Y$ is weakly mixing, \cite[Lemma 5.4]{PV08} implies that $y \mapsto \psi_1(y) \cG_1$ is essentially constant. Take $v \in \cI$ such that $\psi_1(y) \in v \cG_1$ a.e.\ and denote $p_1 := v^* v$.

Define the compact subgroup $L \subset \cG_1$ consisting of the unitaries $u \ot u_g$ that satisfy $p_1 (u \ot u_g) = p_1$. Define the measurable map $\psi_2 : Y \recht L \backslash \cG_1$ such that $\psi_1(y) = v \psi_2(y)$ a.e. Composing $\psi_2$ with a measurable cross-section $L \backslash \cG_1 \recht \cG_1$, we find a measurable map $\psi : Y \recht \cG_1$ satisfying $\psi_1(y) = v \psi(y)$ a.e. Define the $1$-cocycle $\omega : \Lambda \times Y \recht \cG_1$ given by $\omega(s,y) = \psi(s * y) \omega_1(s,y) \psi(y)^{-1}$. Define the group morphism $\pi : \Lambda \recht \cU(p_1(\M_d(\C) \ot \rL \Gamma) p_1)$ given by $\pi(s) = v^* \gamma(s) v$. By construction,
$$\pi(s) = p_1 \omega(s,y) \quad\text{a.e.}$$
Define the closed subgroup $\cG_2 \subset \cG_1$ consisting of the unitaries $u \ot u_g$ that commute with $p_1$. It follows that $\omega$ takes values a.e.\ in $\cG_2$ and hence $\pi(s) \in \cG_2 p_1$ for all $s \in \Lambda$. Note that $L$ is a normal subgroup of $\cG_2$. We get a well-defined group morphism $\delta : \Lambda \recht \cG_2/L$ such that $\pi(s) = \delta(s) p_1$. So, $\delta(\Lambda)L$ commutes with $p_1 = v^* v$ and $v^* \gamma(s) v = \delta(s) p_1$ for all $s \in \Lambda$.

Write $\psi(y) = \zeta(y) \ot u_{\theta(y)}$. View $\zeta$ as a unitary element in $\M_d(\C) \ot B q_1$. Replacing $V$ by $V \zeta^*$, we may assume that $\psi(y) = 1 \ot u_{\theta(y)}$. Define the projection $q_g \in B q_1$ with support $\{y \in Y \mid \theta(y) = g\}$. Write
$$q_g = \sum_{k=1}^m e_k \ot q^k_g \quad\text{and}\quad v = \sum_{ij} e_{ij} \ot v_{ij} \; .$$
Since $\eta(V)$ equals a.e.\ the function $y \mapsto (\Tr \ot \tau)(q_1)^{-1/2} v(1 \ot u_{\theta(y)})$, it follows that
\begin{equation}\label{eq.onestarbis}
V = (\Tr \ot \tau)(q_1)^{-1/2} \sum_{g \in \Gamma} \sum_{i,j,k} e_{i,jk} \ot v_{ij} u_g q^k_g \; .
\end{equation}
Define the projections $\qtil^k_g := u_g q^k_g u_g^*$. Since $V^* V = 1 \ot q_1$ we have $$(1 \ot q_h) V^* V (1 \ot q_g) = \delta_{g,h} 1 \ot q_g$$
so that by \eqref{eq.onestarbis}, it follows that
$$
(1 \ot \qtil^j_h) p_1 (1 \ot \qtil^i_g) = (\Tr \ot \tau)(q_1) \delta_{i,j} \delta_{g,h} 1 \ot \qtil^i_g \; .
$$
Applying $\Tr \ot E_A$, it follows that the projections $\qtil^i_g$ are orthogonal. In particular, the sum of their traces is at most $1$, so that $(\Tr \ot \tau)(q_1) \leq 1$. Hence, we may assume from the beginning that $m=1$ and that $q_1 \in A$. We do not write the upper indices $i,j,k$ any more. Since the projections $\qtil_g$ are orthogonal, $u := \sum_{g \in \Gamma} u_g q_g$ is a partial isometry in $M$ with right support $q_1$, with left support in $A$ and such that $u \, A q_1 \, u^* = A \, uu^*$. Replacing $V$ by $V(1 \ot u^*)$ and $q_1$ by $uu^*$, we may further assume that $V = \tau(q_1)^{-1/2} v (1 \ot q_1)$. By construction, the $1$-cocycle $\om$ that corresponds to the group of unitaries $(V^* \gamma(s) V)_{s \in \Lambda}$ normalizing $\M_d(\C) \ot A q_1$ satisfies
\begin{equation}\label{eq.deltaomega}
p_1 \om(s,y) = \pi(s) = p_1 \delta(s) \quad\text{and hence,}\quad \om(s,y) L = \delta(s) L \; .
\end{equation}

Let $p_1 = \sum_{g \in \Gamma} P_g \ot u_g$, with $P_g \in \M_d(\C)$, be the Fourier decomposition of $p_1$.
Since $V = \tau(q_1)^{-1/2} v (1 \ot q_1)$, we have
\begin{equation}\label{eq.twostarbis}
(1 \ot q_1) p_1 (1 \ot q_1) = \tau(q_1) \, 1 \ot q_1 \; .
\end{equation}
Applying $\id \ot E_A$, we get that $P_e = \tau(q_1) \, 1$. So, when $u \ot u_k \in L$, the formula $p_1 (u^* \ot u_k^*) = p_1$ implies
that $P_k = \tau(q_1) u$. In particular, the homomorphism $L \recht \Gamma : u \ot u_k \mapsto k$ is injective. We denote the image by $K$ and define the unitary representation $\rho : K \recht \cU(\M_d(\C))$ such that $L = \{\rho(k) \ot u_k \mid k \in K\}$. Define $\Gamma_1$ as the image of $\delta(\Lambda) L$ in $\Gamma$. By construction, $K$ is a finite normal subgroup of $\Gamma_1$. Define $\cG$ as in the formulation of the theorem, i.e.\ as the unitaries $u \ot u_g$, $g \in \Gamma_1$, that normalize $L$. So, $\delta(\Lambda) L \subset \cG$.

Let $k \in K - \{e\}$. Multiplying \eqref{eq.twostarbis} on the right by $\rho(k)^* \ot u_k^*$ and applying $\id \ot E_A$, it follows that $q_1 \, \si_k(q_1) = 0$. Define the projection $q = \sum_{k \in K} \si_k(q_1)$. We claim that $q$ is $\Gamma_1$-invariant. Recall that $s * y$ denotes the action of $s \in \Lambda$ on $y \in Y$ implemented by $\Ad V^* \gamma(s) V$. Denote by $\mu : \Lambda \times Y \recht \Gamma$ and $\delta_1 : \Lambda \recht \Gamma_1/K$ the compositions of the $1$-cocycle $\om$ and the group morphism $\delta$ with the natural morphism $\cG \recht \Gamma$. By \eqref{eq.deltaomega} we have $\mu(s,y) K = \delta_1(s) K$, so that $\mu$ takes values in $\Gamma_1$ and
$$\delta_1(s) K \cdot y = \mu(s,y)K \cdot y = K \mu(s,y) \cdot y = K \cdot (s * y) \; .$$
Hence $\delta_1(\Lambda)K \cdot Y = K \cdot Y$, proving the claim.

Define the projection $p_K = |K|^{-1} \sum_{k \in K} \rho(k) \ot u_k$. Put $w := \tau(q)^{-1/2} v (1 \ot q)$. We make several computations to check that all the conclusions of the theorem hold. We freely use that $\tau(q) = |K|\, \tau(q_1)$, that $V = \tau(q_1)^{-1/2} v(1 \ot q_1)$ and that $v u = v p_1 u = v$ for all $u \in L$. First we get that
\begin{align*}
w w^* &= \tau(q)^{-1} \, v (1 \ot q) v^* = \sum_{u \in L} \tau(q)^{-1} \, v u (1 \ot q_1) u^* v^* \\
&= |L| \, \tau(q)^{-1} \, v (1 \ot q_1) v^* = V V^* = p \; .
\end{align*}
On the other hand,
\begin{align*}
w^* w &= \tau(q)^{-1} \, (1 \ot q) \, v^* v \, (1 \ot q) = \tau(q)^{-1} \sum_{u_1,u_2 \in L} u_1 (1 \ot q_1) u_1^* \, p_1 \,  u_2 (1 \ot q_1) u_2^* \\
&= \tau(q)^{-1} \sum_{u_1,u_2 \in L} u_1 (1 \ot q_1) p_1 (1 \ot q_1) u_2^* = |L|^{-1} \sum_{u_1,u_2 \in L} u_1 V^* V u_2^* \\
& = |L|^{-1} \sum_{u_1,u_2 \in L} u_1 (1 \ot q_1) u_2^* = p_K (1 \ot q) \; .
\end{align*}
Since $\delta(\Lambda) L$ commutes with $1 \ot q$ and $v^* \gamma(s) v = \delta(s) p_1$, it follows that $w^* \gamma(s) w = \delta(s) p_K(1 \ot q)$ for all $s \in \Lambda$. Finally,
\begin{align*}
w^* C w &= (1 \ot q) v^* C v (1 \ot q) = \sum_{u_1,u_2 \in L} u_1 (1 \ot q_1) v^* C v (1 \ot q_1) u_2^* \\
&= \sum_{u_1,u_2 \in L} u_1 V^* C V u_2^* = p_K (\M_d(\C) \ot A q_1) p_K = (\M_d(\C) \ot Aq)^{\Ad L} p_K \; .
\end{align*}
This ends the proof of the theorem.
\end{proof}

\begin{corollary}\label{cor.conjugacy-actions}
The conclusions of Theorem \ref{thm.conjugacy-actions} can be strengthened if we impose extra conditions. Denote by $N$ the von Neumann algebra generated by $C$ and $\gamma(\Lambda)$.
\begin{enumlist}
\item If we impose the extra condition that $N \not\embed A \rtimes \Centr g$ whenever $g \neq e$, it follows that $K=\{e\}$, $q=1$, $w=v$ and $v^* \gamma(s) v = \pi(s) \ot u_{\delta_1(s)}$ for all $s \in \Lambda$, where $\pi : \Lambda \recht \cU(\M_d(\C))$ and $\delta_1 : \Lambda \recht \Gamma$ are group morphisms. If moreover the weak mixing assumption is strengthened by imposing that $\C 1$ is the only non-zero, finite dimensional, globally $(\Ad \gamma(s))_{s \in \Lambda}$-invariant vector subspace of $C$, it follows that $d=1$ and that $\pi:\Lambda \recht \T$ is a character.
\item If we impose the extra condition that $N \not\embed A \rtimes \Gamma_1$ whenever $\Gamma_1 \actson (X,\mu)$ is non-ergodic, it follows that $q=1$ and $v^* v = p_K$.
\end{enumlist}
\end{corollary}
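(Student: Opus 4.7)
My plan for both parts is to use the structural conclusion of Theorem \ref{thm.conjugacy-actions} to produce an intertwiner $N \embed A \rtimes \Gamma_1$ and then contradict the extra hypothesis. Indeed, since $w^* C w = (\M_d(\C) \ot Aq)^{\Ad L} p_K$ and $w^* \gamma(s) w = \delta(s) p_K (1 \ot q)$ with $\delta(s) \in \cG \subset \cU(\M_d(\C)) \ot \rL \Gamma_1$, the compressed algebra $w^* N w$ lies inside $p_K(\M_d(\C) \ot (Aq \rtimes \Gamma_1)) p_K$, and the partial isometry $w$ then witnesses an intertwining of $N$ into $A \rtimes \Gamma_1$.

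For Part 2, I would observe that if $q \neq 1$ then $q$ is a non-trivial $\Gamma_1$-invariant projection in $A$, hence $\Gamma_1 \actson (X,\mu)$ is non-ergodic, contradicting the hypothesis. So $q = 1$, and the definition $w = \tau(q)^{-1/2} v (1 \ot q)$ then collapses to $w = v$, whence $v^* v = w^* w = p_K$.

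For Part 1, I would first show $K = \{e\}$: suppose otherwise and pick $k_0 \in K \setminus \{e\}$. Since $K$ is a finite normal subgroup of $\Gamma_1$, the conjugation map $\Gamma_1 \recht \Aut(K)$ has finite image, so $\Centr_{\Gamma_1}(K)$ has finite index in $\Gamma_1$. Combining $N \embed A \rtimes \Gamma_1$ with the finite-index statement in Remark \ref{rem.intertwine} yields $N \embed A \rtimes \Centr_{\Gamma_1}(K) \subset A \rtimes \Centr_{\Gamma}(k_0)$, contradicting the hypothesis. Hence $K = \{e\}$, forcing $L = \{e\}$, $p_K = 1$, $\cG = \cU(\M_d(\C)) \ot \Gamma_1$ and $\delta(s) = \pi(s) \ot u_{\delta_1(s)}$ for group morphisms $\pi$ and $\delta_1$. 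To finish I also need $q = 1$, which will upgrade $w$ to $v$, $p_1$ to $1$ and yield $v^* \gamma(s) v = \delta(s) p_1 = \pi(s) \ot u_{\delta_1(s)}$. Since the Part 1 hypothesis is phrased via centralizers rather than ergodicity, the main obstacle will be bridging this gap; my plan is to combine the freeness of $\Gamma \actson X$ with the weakly mixing $\Gamma_1$-action on $Aq$ inherited from the weak mixing of $(\Ad \gamma(s))_{s \in \Lambda}$ on $\cZ(C)$, in order to produce, assuming $q \neq 1$, an element $g \neq e$ with $\Gamma_1 \subset \Centr_\Gamma(g)$; this would yield the forbidden embedding $N \embed A \rtimes \Centr_\Gamma(g)$.

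For the last sentence of Part 1, I would note that $\M_d(\C) \ot 1 \subset v^* C v = \M_d(\C) \ot A$ is a non-zero $d^2$-dimensional subspace that is invariant under $\Ad(v^* \gamma(s) v) = \Ad(\pi(s) \ot u_{\delta_1(s)})$, because $(\pi(s) \ot u_{\delta_1(s)})(x \ot 1)(\pi(s) \ot u_{\delta_1(s)})^* = \pi(s) x \pi(s)^* \ot 1$. The strengthened weak mixing assumption therefore forces $d^2 = 1$, i.e.\ $d = 1$, after which $\pi : \Lambda \recht \cU(\C) = \T$ is automatically a character.
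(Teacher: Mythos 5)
Your part 2 is correct and is essentially the paper's argument, and your derivation of $K=\{e\}$ in part 1 (finite normal $K$ gives $\Centr_{\Gamma_1}(K)$ of finite index in $\Gamma_1$, then the finite-index statement of Remark \ref{rem.intertwine} gives $N \embed A \rtimes \Centr_\Gamma(k_0)$) is a valid variant of what the paper does. The genuine gap is the step $q=1$ in part 1. Your proposed bridge -- from $q \neq 1$ produce an element $g \neq e$ with $\Gamma_1 \subset \Centr_\Gamma(g)$ using freeness of $\Gamma \actson X$ and weak mixing of $\Gamma_1$ on $Aq$ -- cannot work: a non-trivial $\Gamma_1$-invariant projection $q \in A$ only expresses non-ergodicity of $\Gamma_1 \actson (X,\mu)$, and non-ergodicity of a subgroup has no bearing on centralizers of elements of $\Gamma$. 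In particular, weak mixing of $\Gamma_1$ on the corner $Aq$ says nothing about the complement $1-q$ and produces no group element commuting with $\Gamma_1$; the part~1 hypothesis is about centralizers, not ergodicity, and there is no mechanism in your plan converting one into the other. Note also that this missing step is not peripheral: you need $q=1$ to get $w=v$, $p_K=1$ and $v^*\gamma(s)v = \pi(s) \ot u_{\delta_1(s)}$, and your argument for $d=1$ (which is otherwise fine, after conjugating the invariant subspace back into $C$ as $v(\M_d(\C) \ot 1)v^*$ using $\gamma(s)v = v\delta(s)$) silently uses all of these.

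The ingredient you have not used is the conclusion of Theorem \ref{thm.conjugacy-actions} that $v^*v$ commutes with $\delta(\Lambda)L$, and this is exactly how the paper closes the gap. From the extra condition one gets, by the same finite-index trick you used for $K$, that $\{h g h^{-1} \mid h \in \Gamma_1\}$ is infinite for every $g \neq e$; choose $h_n \in \Gamma_1$ with $h_n g h_n^{-1} \recht \infty$ for all $g \neq e$ and unitaries $u_n \in \cU(\M_d(\C))$ with $u_n \ot u_{h_n} \in \delta(\Lambda)L$. Writing the Fourier expansion of $v^*v$ over $\Gamma$, commutation with the $u_n \ot u_{h_n}$ forces all coefficients at $g \neq e$ to vanish, so $v^*v = p_0 \ot 1$ for a projection $p_0 \in \M_d(\C)$. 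Then $w^*w = \tau(q)^{-1}(1 \ot q) v^*v (1 \ot q) = \tau(q)^{-1} p_0 \ot q$, while the theorem also gives $w^*w = p_K(1 \ot q)$, whose Fourier coefficient at $k \in K - \{e\}$ is $|K|^{-1} \rho(k) \ot \si_k(q) \neq 0$. Comparing the two expressions yields $K = \{e\}$ and $p_0 = \tau(q)1$, hence $\tau(q)=1$ and $q=1$, in one stroke; this simultaneously recovers your $K=\{e\}$ and supplies the missing $q=1$, after which $w=v$, $v^*v=1$ and the remaining conclusions of part 1 follow as you indicated.
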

\begin{proof}
1. Choose a projection $q_1 \in Aq$ such that $q = \sum_{k \in K} \si_k(q_1)$. It follows that $w(\M_d(\C) \ot q_1) w^*$ is a globally $(\Ad \gamma(s))_{s \in \Lambda}$-invariant vector subspace of $C$. So, $d=1$ and the rest follows immediately.

2. Denote by $\delta_1 : \Lambda \recht \Gamma_1/K$ the composition of $\delta$ and the natural homomorphism $\cG / L \recht \Gamma_1/K$. Replacing $\Gamma_1$ by $\delta_1(\Lambda) K$, we may assume that $\delta_1$ is surjective. The conclusions of Theorem \ref{thm.conjugacy-actions} say in particular that $w^* N w \subset \M_d(\C) \ot (A \rtimes \Gamma_1)$. The extra condition $N \not\embed A \rtimes \Centr g$ whenever $g \neq e$, then implies that $\{h g h^{-1} \mid h \in \Gamma_1\}$ is infinite for all $g \neq e$. So, we can take a sequence $h_n \in \Gamma_1$ such that $h_n g h_n^{-1} \recht \infty$ for all $g \neq e$. Take $u_n \in \cU(\M_d(\C))$ such that $u_n \ot u_{h_n} \in \delta(\Lambda) L$. Since $v^* v$ commutes with $\delta(\Lambda)L$, it follows that $v^* v = p_0 \ot 1$ for some projection $p_0 \in \M_d(\C)$. But then $w^* w = \tau(q)^{-1} p_0 \ot q$. Since $w^* w$ actually equals $p_K (1 \ot q)$ it follows that $q=1$ and $K = \{e\}$.

3. As in the proof of 2, we get that $w^* N w \subset \M_d(\C) \ot (A \rtimes \Gamma_1)$. Since $q$ is $\Gamma_1$-invariant, the extra condition 3 implies that $q=1$. Hence $w=v$ and $v^* v = p_K$.
\end{proof}

\section{Some properties of the comultiplication}\label{sec.comult}

Throughout this section, we fix a countable group $\Lambda$ and put $M = \rL \Lambda$. We denote by $(u_g)_{g \in \Lambda}$ the canonical unitaries generating $\rL \Lambda$. We consider the comultiplication $\de : M \recht M \ovt M$ given by $\de(u_g) = u_g \ot u_g$ for all $g \in \Lambda$.

We start with the following elementary and well known lemma.

\begin{lemma}\label{lem.elemcomult}
A non-zero element $u \in M$ satisfies $\de(u) = u \ot u$ if and only if $u = u_g$ for some $g \in \Lambda$.

A unital von Neumann subalgebra $A \subset M$ satisfies $\de(A) \subset A \ovt A$ if and only if $A$ is of the form $A = \rL \Sigma$ for some subgroup $\Sigma < \Lambda$.
\end{lemma}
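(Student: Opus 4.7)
My plan is to handle the two statements in turn, using nothing more than Fourier expansion along the canonical unitaries together with slicing by a normal functional on one tensor leg.

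For the first statement, the direction $u=u_g \Rightarrow \de(u)=u\ot u$ is immediate from the definition of $\de$. Conversely, suppose $u\in M$ is non-zero and $\de(u)=u\ot u$. Writing the $\|\cdot\|_2$-convergent Fourier expansion $u=\sum_{g\in\Lambda} c_g u_g$ with $c_g=\tau(uu_g^*)$, normality of $\de$ gives $\de(u)=\sum_g c_g\, u_g\ot u_g$, while $u\ot u = \sum_{g,h} c_g c_h\, u_g\ot u_h$. Comparing coefficients in the orthonormal basis $\{u_g\ot u_h\}_{(g,h)\in\Lambda\times\Lambda}$ of $\rL^2(M\ovt M)$ yields $c_g c_h = 0$ for $g\neq h$ and $c_g = c_g^2$ for every $g$. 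Hence every $c_g\in\{0,1\}$ and at most one of them is non-zero; since $u\neq 0$, exactly one is, so $u=u_g$.

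For the second statement, the direction $A=\rL\Sigma\Rightarrow \de(A)\subset A\ovt A$ is again obvious. For the converse, define $\Sigma:=\{g\in\Lambda\mid u_g\in A\}$; because $A$ is a unital $*$-subalgebra, $\Sigma$ is a subgroup of $\Lambda$, and clearly $\rL\Sigma\subset A$. It remains to check that $A\subset\rL\Sigma$, i.e.\ that the Fourier support of every $a\in A$ lies in $\Sigma$. The key observation is that for each $h\in\Lambda$, the normal linear functional $\vphi_h:M\recht\C:x\mapsto\tau(xu_h^*)$ restricts to a normal linear functional on $A$, so the slice map $\vphi_h\ot\id$ maps $A\ovt A$ into $A$.

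Fix $a\in A$ with Fourier expansion $a=\sum_g c_g u_g$. By normality of $\de$, we have $\de(a)=\sum_g c_g\, u_g\ot u_g\in A\ovt A$ by hypothesis. Applying $\vphi_h\ot\id$ and using $\vphi_h(u_g)=\delta_{g,h}$, we obtain $c_h u_h = (\vphi_h\ot\id)(\de(a))\in A$ for every $h\in\Lambda$. Consequently, whenever $c_h\neq 0$ we have $u_h\in A$, i.e.\ $h\in\Sigma$. Thus $a\in\rL\Sigma$, completing the proof. There is no real obstacle here; the only point worth a line of justification is that $\vphi_h\ot\id$ genuinely sends $A\ovt A$ into $A$, which follows from the fact that a slice by a normal functional on one leg of a binormal tensor product of von Neumann algebras lands in the other leg.
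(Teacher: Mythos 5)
Your argument is correct and is essentially the paper's own proof: the paper's single observation $(\id \ot \tau(\,\cdot\, u_g^*))\de(x) = \tau(x u_g^*)\, u_g$ is exactly your slice-map computation, and your set $\Sigma = \{g \in \Lambda \mid u_g \in A\}$ coincides with the paper's Fourier-support set, so both halves run along the same lines. The only cosmetic point is that pushing $\de$ through the $\|\cdot\|_2$-convergent Fourier series is better justified by $\de$ being trace-preserving (hence a $\|\cdot\|_2$-isometry) than by normality alone, since the partial sums need not be uniformly bounded in operator norm.
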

\begin{proof}
Observe that $(\id \ot \tau u_g^*)\de(x) = \tau(x u_g^*) u_g$ for all $g \in \Lambda$, $x \in M$. Let $u \in M$ be a non-zero element satisfying $\de(u) = u \ot u$. Take $g \in \Lambda$ such that $\tau(u u_g^*) \neq 0$. It follows that $u$ is a non-zero multiple of $u_g$. Since $\de(u) = u \ot u$, this multiple must be $1$.

Let $A \subset M$ be a von Neumann subalgebra satisfying $\de(A) \subset A \ovt A$. Define the subset $\Sigma \subset \Lambda$ consisting of the elements $g \in \Lambda$ for which there exists $a \in A$ with $\tau(a u_g^*) \neq 0$. Since $A \ni (\id \ot \tau u_g^*)\de(a) = \tau(a u_g^*) u_g$, it follows that $u_g \in A$ for all $g \in \Sigma$. Conversely, it is obvious that $g \in \Sigma$ whenever $u_g \in A$. Since $A$ is a von Neumann subalgebra, it follows that $\Sigma$ is a subgroup of $\Lambda$ and that $A = \rL \Sigma$.
\end{proof}

Recall from paragraph \ref{subsec.relamen} the notion of relative amenablity for von Neumann subalgebras.

\begin{proposition}\label{prop.about-comult}
Let $P \subset M$ be a von Neumann subalgebra.
\begin{enumlist}
\item If $P$ is diffuse, then $\de(P) \not\embed M \ot 1$ and $\de(P) \not\embed 1 \ot M$.

\item If $\de(M) \embed M \ovt P$, there exists a non-zero projection $p \in P' \cap M$ such that $Pp \subset pMp$ has finite index.

\item Denote by $\Centr g$ the centralizer of $g \in \Lambda$ and assume that for all $g \neq e$ we have $P \not\embed \rL(\Centr g)$. If $\cH \subset \rL^2(M \ovt M)$ is a $\de(P)$-$\de(M)$-subbimodule that is finitely generated as a right $\de(M)$-module, then $\cH \subset \de(\rL^2(M))$.

In particular, the quasi-normalizer of $\de(P)$ inside $M \ovt M$ is contained in $\de(M)$.
So, if $\Lambda$ is an icc group, the quasi-normalizer of $\de(M)$ inside $M \ovt M$ equals $\de(M)$.

\item If $P$ has no amenable direct summand, $\de(P)$ is strongly non-amenable relative to $M \ot 1$. In particular, if $N \subset M$ is an amenable von Neumann subalgebra, we have $\de(P) \not\embed M \ovt N$.
\end{enumlist}
\end{proposition}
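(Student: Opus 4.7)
The plan is to address the four parts in turn, each leveraging a different structural feature of the comultiplication.

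For (1), I choose unitaries $u_n \in \cU(P)$ with $u_n \recht 0$ weakly (possible since $P$ is diffuse) and verify Definition \ref{def.intertwine}(iv) for the generating subgroup $\{\de(u_n)\}$. For elementary tensors $c \ot a,\, d \ot b$ a direct computation gives
\[
E_{M \ot 1}\bigl((c \ot a) \, \de(u_n) \, (d \ot b)\bigr) = \sum_{g \in \Lambda} (u_n)_g \, \tau(au_gb) \, c u_g d,
\]
a finite sum whose coefficients all vanish, and density extends the conclusion. The case $1 \ot M$ is symmetric.

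For (2), the key reduction is $\de(M) \embed M \ovt P \iff M \embed P$. Using $\{u_g\}_{g \in \Lambda}$ as generating subgroup of $\de(M)$,
\[
E_{M \ovt P}\bigl((c \ot a) \, \de(u_g) \, (d \ot b)\bigr) = c u_g d \ot E_P(au_gb),
\]
whose $\|\cdot\|_2$-norm factorises as $\|c u_g d\|_2 \cdot \|E_P(au_gb)\|_2$; the first factor is bounded below (take $c = d = 1$, so $\|u_g\|_2 = 1$) and uniformly bounded above, which yields the equivalence. Then $M \embed P$ with $P \subset M$ supplies, via the standard intertwining-to-finite-index result, a non-zero projection $p \in P' \cap M$ such that $Pp \subset pMp$ has finite Jones index.

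For (3), I decompose $\rL^2(M \ovt M)$ as a $\de(M)$-$\de(M)$-bimodule according to the conjugacy classes of $\Lambda$. Via the unitary
\[
W : \rL^2(M \ovt M) \recht \rL^2(M) \ovt \ell^2(\Lambda), \qquad W(u_h \ot u_g) = u_h \ot \delta_{h^{-1}g},
\]
the left $\de(u_k)$-action becomes $u_k \ot 1$ and the right $\de(u_l)$-action becomes right multiplication by $u_l$ on the first factor together with the conjugation $\delta_s \mapsto \delta_{l^{-1}sl}$ on $\ell^2(\Lambda)$. Splitting $\ell^2(\Lambda) = \bigoplus_C \ell^2(C)$ by conjugacy classes and applying the imprimitivity-type isomorphism $\rL^2(M) \ot \ell^2(C) \cong \rL^2(M) \ot_{\rL(\Centr g)} \rL^2(M)$ for any $g \in C$, the $\{e\}$-summand recovers $\rL^2(\de(M))$, whereas a non-zero projection of $\cH$ onto some $C \neq \{e\}$ piece would yield, via Popa's correspondence between intertwining and basic-construction subbimodules, a contradiction to $P \not\embed \rL(\Centr g)$. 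The quasi-normalizer statements then follow: for $x$ in the quasi-normalizer of $\de(P)$, the bimodule $\overline{\de(P) \, x \, \de(M)}$ is finitely generated as right $\de(M)$-module by definition, hence lies in $\rL^2(\de(M)) \cap (M \ovt M) = \de(M)$; the icc case applies this with $P = M$, using $[\Lambda : \Centr g] = \infty$ to verify the hypothesis.

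For (4), I argue by contradiction: if $\de(P) p_1$ were amenable relative to $M \ot 1$ for some non-zero $p_1 \in \de(P)' \cap M \ovt M$, Lemma \ref{lem.sequence} together with the identification $\rL^2\bigl((M \ovt M) \ot_{M \ot 1} (M \ovt M)\bigr) \cong \rL^2(M) \ot \rL^2(M) \ot \rL^2(M)$ would produce a bounded almost-$\de(P) p_1$-central sequence $\xi_n$ with $\limsup_n \|\xi_n p_1\| > 0$. On the second and third tensor factors, $\de(u) = u \ot u$ acts by left and right multiplication respectively, so almost-centrality forces almost-$P$-invariance for the diagonal representation $u \mapsto \lambda_u \ot \lambda_u$ on $\ell^2(\Lambda \times \Lambda)$. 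Since the action $\Lambda \actson \Lambda \times \Lambda$ by $g \cdot (h, k) = (gh, gk)$ is free, this representation is a countable direct sum of copies of the left-regular representation of $\Lambda$, and the existence of almost-invariant vectors on a corner forces an amenable direct summand of $P$, contradicting the hypothesis. The main technical obstacle is precisely this last reduction in Part (4): extracting genuine almost-invariance of the diagonal representation from almost-centrality in a three-factor tensor product, and exploiting the full \emph{no amenable direct summand} hypothesis rather than mere non-amenability of $P$ in order to handle every projection $p_1 \in \de(P)' \cap M \ovt M$.
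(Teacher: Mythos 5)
Parts 1 and 2 of your proposal are correct and essentially the paper's own arguments. Part 3 is also correct, but you take a (mildly) different route: you decompose $\rL^2(M \ovt M) \ominus \de(\rL^2(M))$ explicitly into $\de(M)$-$\de(M)$-subbimodules indexed by the non-trivial conjugacy classes, identify each with $\rL^2(\langle M, e_{\rL(\Centr g)}\rangle)$, and invoke the finite-right-dimension characterization of $\embed$; the paper instead picks unitaries $v_n \in \cU(P)$ witnessing $P \not\embed \rL(\Centr g)$ and checks $\|E_{\de(M)}(x \de(v_n) y^*)\|_2 \recht 0$ for $x,y \perp \de(M)$, quoting the standard intertwining lemma. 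Both rest on the same computation (the solution set of $hgh' = kgk'$ is a coset of $\Centr(k^{-1}h)$) and are interchangeable; yours makes the bimodule structure explicit, the paper's avoids discussing projections onto the summands.

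The genuine gap is in part 4, at exactly the step you yourself flag as ``the main technical obstacle''. First, Lemma \ref{lem.sequence} goes the wrong way: it produces a weak containment \emph{from} a sequence of right-bounded almost-central vectors; starting from relative amenability you only get positive $\rL^1$-elements $T_n \in p_1 \rL^1(\langle M \ovt M, e_{M \ot 1}\rangle)^+ p_1$ as in paragraph \ref{subsec.relamen} (i.e.\ \cite[Theorem 2.1]{OP07}), and right-boundedness of the corresponding $\rL^2$-vectors is not available. Second, and more seriously, the reduction to ``almost-$P$-invariance of the diagonal representation on $\ell^2(\Lambda \times \Lambda)$'' does not parse: $P$ is an arbitrary von Neumann subalgebra with no amenable direct summand, not a group algebra, so almost-invariant vectors for ``a representation of $P$'' are undefined, and in any case the almost-centrality of $\xi_n$ under $\de(P)p_1$ also acts non-trivially on the first tensor leg of $\rL^2(M)\ot\rL^2(M)\ot\rL^2(M)$, so it cannot be read off the second and third legs alone. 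The statement you would need here is precisely the content of part 4, so as written the proof is circular/incomplete. The way to close it is to stay at the level of bimodules: the $M$-$(M \ovt M)$-bimodule $\bim{(\de(M) \ot 1)}{\rL^2(M \ovt M \ovt M)}{(M \ovt 1 \ovt M)}$ is isomorphic to the coarse $M$-$(M \ovt M)$-bimodule (this is where the freeness you invoke actually enters); failure of strong non-amenability then gives a non-zero $p_1 \in \de(P)' \cap (M \ovt M)$ with $\bim{\de(P)}{\rL^2(p_1(M \ovt M))}{(M \ovt M)}$ weakly contained in the coarse $P$-$(M \ovt M)$-bimodule, and taking $z \in \cZ(P)$ with $\de(z)$ the support of $E_{\de(P)}(p_1)$, the map $\de$ embeds the trivial $Pz$-$Pz$-bimodule into $\rL^2(\de(z)(M \ovt M)\de(z))$, so the trivial $Pz$-$Pz$-bimodule is weakly contained in the coarse one and $Pz$ is amenable, contradicting the hypothesis. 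Finally, you do not address the last sentence of part 4 at all: for amenable $N \subset M$ one still needs the observation that $M \ovt N$ is amenable relative to $M \ot 1$, so that $\de(P) \embed M \ovt N$ would make a corner $\de(P)p$ amenable relative to $M \ot 1$, reducing to the first statement.
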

\begin{proof}
1. Let $P$ be diffuse. Take a sequence $v_n \in \cU(P)$ tending to $0$ weakly. We claim that $\|E_{M \ot 1}(x \de(v_n) y^*)\|_2 \recht 0$ for all $x,y \in M \ovt M$. It suffices to prove this claim for $x=1 \ot u_g$ and $y = 1 \ot u_h$, $g,h \in \Lambda$. Then,
$$\|E_{M \ot 1}((1 \ot u_g) \de(v_n) (1 \ot u_h)^*)\|_2 = \|\tau(u_g v_n u_h^*) u_{g^{-1}h}\|_2 = |\tau(u_g v_n u_h^*)| \recht 0$$
and the claim follows. By Definition \ref{def.intertwine} $\de(P) \not\embed M \ot 1$. The statement $\de(P) \not\embed 1 \ot M$ follows similarly.

2. Assume that $\de(M) \embed M \ovt P$. Definition \ref{def.intertwine} provides elements $h_1,\ldots,h_n \in \Lambda$ and $\delta > 0$ such that
$$\sum_{i,j=1}^n \|E_{M \ovt P}((1 \ot u_{h_i}) \de(u_g) (1 \ot u_{h_j})^*)\|_2^2 \geq \delta \quad\text{for all}\quad g \in \Lambda \; .$$
This precisely means that
$$\sum_{i,j=1}^n \|E_P(u_{h_i} u_g u_{h_j}^*)\|_2^2 \geq \delta \quad\text{for all}\quad g \in \Lambda \; .$$
So, $M \embed_M P$. This means that $Pp \subset pMp$ has finite index for some non-zero projection $p \in P' \cap M$.

3. Assume that $P \not\embed \rL(\Centr g)$ for all $g \neq e$. By Definition \ref{def.intertwine} we find a sequence of unitaries $v_n \in \cU(P)$ such that $\|E_{\rL(\Centr g)}(u_h v_n u_k^*)\|_2 \recht 0$ for all $h,k \in \Lambda$ and all $g \neq e$. To conclude the proof of the proposition, it suffices to prove the following (see e.g.\ \cite[Lemma D.3]{Va06}, based on \cite[Theorem 3.1]{Po03}):
$$\|E_{\de(M)}(x \de(v_n) y^*)\|_2 \recht 0 \quad\text{for all}\quad x,y \in (M \ovt M) \ominus \de(M) \; .$$
It is sufficient to prove this statement for $x = u_h \ot u_k$ and $y = u_{h'} \ot u_{k'}$ with $h \neq k$ and $h' \neq k'$. In that case
$$\|E_{\de(M)}((u_h \ot u_k)\de(v_n)(u_{h'} \ot u_{k'})^*)\|_2^2 = \sum_{g \in \Lambda, hg(h')^{-1} = kg(k')^{-1}} |\tau(v_n u_g^*)|^2 \; .$$
If for all $g \in \Gamma$, we have $hg(h')^{-1} \neq kg(k')^{-1}$, this last expression is zero. If there is at least one $g_0 \in \Lambda$ such that $hg_0(h')^{-1} = kg_0(k')^{-1}$, this last expression equals
$$\sum_{g \in \Centr k^{-1}h} |\tau(v_n u_{gg_0}^*)|^2 = \|E_{\rL(\Centr k^{-1}h)}(v_n u_{g_0}^*)\|_2^2 \recht 0 \; .$$

4. Note that the $M$-$(M \ovt M)$-bimodule $\bim{(\de(M) \ot 1)}{\rL^2(M \ovt M \ovt M)}{(M \ovt 1 \ovt M)}$ is isomorphic with the coarse $M$-$(M \ovt M)$-bimodule $\rL^2(M) \ovt \rL^2(M \ovt M)$. Assume that $\de(P)$ is not strongly non-amenable relative to $M \ot 1$. We get a non-zero projection $p \in \de(P)' \cap (M \ovt M)$ such that $\bim{\de(P)}{\rL^2(p (M \ovt M))}{M \ovt M}$ is weakly contained in $\bim{(\de(P) \ot 1)}{\rL^2(M \ovt M \ovt M)}{(M \ovt 1 \ovt M)}$ and hence, weakly contained in the coarse $P$-$(M \ovt M)$-bimodule. Take $z \in P$ such that $\de(z)$ is the support projection of $E_{\de(P)}(p)$. Note that $z$ is a non-zero central projection in $P$ and that $\de$ embeds the trivial $Pz$-$Pz$-bimodule into $\bim{\de(Pz)}{\rL^2(\de(z) (M \ovt M) \de(z))}{\de(Pz)}$. It follows that the trivial $Pz$-$Pz$-bimodule is weakly contained in the coarse $Pz$-$Pz$-bimodule so that $Pz$ is amenable.

If $N \subset M$ is an amenable von Neumann subalgebra, then $M \ovt N$ is amenable relative to $M \ot 1$. If $\de(P) \embed M \ovt N$, it follows that $\de(P) p$ is amenable relative to $M \ot 1$ for some non-zero projection $p \in \de(P)' \cap (M \ovt M)$. So, $\de(P)$ is not strongly non-amenable relative to $M \ot 1$. The previous paragraph implies that $P$ has an amenable direct summand.
\end{proof}

\section{Proof of Theorem \ref{thm.specialmain}: superrigidity of group von Neumann algebras}\label{sec.proof}

Theorem \ref{thm.specialmain} is a specific instance of a general superrigidity theorem for group factors $\rL G$ where $G$ arises as a generalized wreath product $G = H_0 \wr_I \Gamma$ for certain group actions $\Gamma \actson I$. The class of actions $\Gamma \actson I$ that we are able to treat, is defined as follows.

\begin{condition}\label{cond}
We say that $\Gamma \actson I$ satisfies Condition \ref{cond} if the following two sets of conditions hold.

{\bf Conditions on the group.} The group $\Gamma$ is icc and admits a chain of infinite subgroups $\Gamma_0 < \Gamma_1 < \cdots < \Gamma_n = \Gamma$ such that $\Gamma_{k-1}$ is almost normal in $\Gamma_k$ for all $k = 1,\ldots,n$. Moreover, at least one of the following rigidity properties hold.
\begin{itemlist}
\item $\Gamma_0 < \Gamma_1$ has the relative property (T).
\item The centralizer of $\Gamma_0$ inside $\Gamma_1$ is non-amenable.
\end{itemlist}

{\bf Conditions on the action.}
\begin{itemlist}
\item There exists $\kappa \in \N$ such that $\Stab J$ is finite whenever $J \subset I$ and $|J| \geq \kappa$.
\item $\Stab i$ is amenable for all $i \in I$.
\end{itemlist}
\end{condition}

The conditions on the group $\Gamma$ in \ref{cond} are satisfied whenever $\Gamma$ is an icc group with property (T), whenever $\Gamma$ is the direct product of two icc groups with at least one of them being non-amenable or whenever $\Gamma$ is itself a wreath product $\Gamma = \Gamma_0 \wr S$ with $\Gamma_0$ being non-amenable and $S$ non-trivial. Indeed, in this last case, we consider the chain of subgroups $\Gamma_0 < \Gamma_0^{(S)} < \Gamma$.

The conditions on the action in \ref{cond} are automatically satisfied when we let $\Gamma$ act on itself by multiplication. They are also satisfied when $\Gamma \actson \Gamma/S$ where $S < \Gamma$ is an amenable subgroup that is almost malnormal: $g S g^{-1} \cap S$ is finite for all $g \in \Gamma - S$.

Whenever $\Gamma \actson I$ satisfies Condition \ref{cond}, we consider the generalized wreath product $G = H_0 \wr_I \Gamma$ and describe all countable groups $\Lambda$ such that $\rL \Lambda \cong \rL G$. The main result is the following Theorem \ref{thm.main-gen}. The conclusions of Theorem \ref{thm.main-gen} can be made significantly more precise if we moreover assume that $\Stab i \cdot j$ is infinite for all $i \neq j$. This excludes plain wreath products and will lead to Theorem \ref{thm.main} below, of which Theorem \ref{thm.specialmain} is a special case.

\begin{theorem}\label{thm.main-gen}
Assume that $\Gamma \actson I$ satisfies Condition \ref{cond}. Let $H_0$ be a non-trivial abelian group and define the generalized wreath product group $G := H_0 \wr_I \Gamma := H_0^{(I)} \rtimes \Gamma$. Denote by $A$ the abelian von Neumann algebra $A =\rL(H_0^{(I)})$ and denote by $(\si_g)_{g \in \Gamma}$ the corresponding generalized Bernoulli action of $\Gamma$ on $A$.

If $\Lambda$ is any countable group and $\pi : \rL \Lambda \recht \rL(G)^t$ a $*$-isomorphism for some $t > 0$, then $t = 1$ and $\Lambda \cong \Sigma \rtimes \Gamma$ for some infinite abelian group $\Sigma$ and some action $\Gamma \action{\al} \Sigma$ by automorphisms.

More precisely, there exists a group isomorphism $\delta : \Lambda \recht \Sigma \rtimes \Gamma$, a $*$-isomorphism $\theta : \rL \Sigma \recht A$ satisfying $\theta \circ \al_g = \si_g \circ \theta$ for all $g \in \Gamma$, a character $\om : G \recht \T$ and a unitary $w \in \rL G$ such that $\pi = \Ad w \circ \pi_\om \circ \pi_\theta \circ \pi_\delta$ where
\begin{itemlist}
\item $\pi_\delta : \rL \Lambda \recht \rL(\Sigma \rtimes \Gamma)$ is the isomorphism given by $\pi_\delta(v_s) = u_{\delta(s)}$ for all $s \in \Lambda$,
\item $\pi_\theta : \rL(\Sigma) \rtimes \Gamma \recht A \rtimes \Gamma$ is given by $\pi_\theta(a u_g) = \theta(a) u_g$ for all $a \in \rL(\Sigma)$ and all $g \in \Gamma$,
\item $\pi_\om$ is the automorphism of $\rL G$ given by $\pi_\om(u_g) = \om(g) \, u_g$ for all $g \in G$,
\end{itemlist}
\end{theorem}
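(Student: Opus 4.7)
The plan is to carry out the three-step analysis of the comultiplication $\de : M \recht M \ovt M$, $\de(v_s) = v_s \ot v_s$ for $s \in \Lambda$, outlined in the introduction, where $M := \rL G = A \rtimes \Gamma$ is identified via $\pi$ with $\rL(\Lambda)^{1/t}$. The end-goal is to reach, through a sequence of unitary conjugacies, the formulas \eqref{eq.specialform} from the introduction, so that the $2$-cohomology result of Theorem \ref{thm.triv-2-coc} can finish the job.

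First I apply Corollary \ref{cor.malleable-rigid} to $Q := \de(\rL \Gamma_0) \subset M \ovt M$. The rigidity hypothesis on $\Gamma_0 < \Gamma_1$ in Condition \ref{cond} provides either the relative property (T) for $Q \subset M \ovt M$, or a subalgebra $\de(\rL \Gamma_1) \subset Q' \cap (M \ovt M)$ which is strongly non-amenable relative to $M \ot 1$ by Proposition \ref{prop.about-comult}(4). Among the three conclusions of the corollary, option one ($Q \embed M \ot 1$) is ruled out by Proposition \ref{prop.about-comult}(1) since $\rL \Gamma_0$ is diffuse; option two ($P \embed M \ovt (A \rtimes \Stab i)$ with $P$ the quasi-normalizer of $Q$) is ruled out because climbing the almost normal chain $\Gamma_0 < \cdots < \Gamma_n = \Gamma$ places $\de(\rL \Gamma) \subset P$, while Proposition \ref{prop.about-comult}(4) forbids $\de(\rL \Gamma) \embed M \ovt (A \rtimes \Stab i)$ since $\rL \Gamma$ has no amenable direct summand and $A \rtimes \Stab i$ is amenable. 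Hence option three applies, and running the symmetric argument on the second tensor leg, a unitary conjugation places $\de(\rL \Gamma) \subset \rL(\Gamma \times \Gamma)$.

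Next I apply Theorem \ref{thm.intertwine-abelian} to $D := \de(A)$ with $\gamma(g) := \de(u_g)$, $g \in \Gamma$, which now lie in $\rL(\Gamma \times \Gamma)$. The four non-intertwining hypotheses are verified using Proposition \ref{prop.about-comult}: (1) from diffuseness of $A$; (4) from the non-amenability of $\rL \Gamma$ versus amenability of $\rL(\Stab i)$; and (2)--(3) because the quasi-normalizer of $D$ contains $\de(M)$ and Proposition \ref{prop.about-comult}(2) forbids embeddings $\de(M) \embed M \ovt P_0$ for the infinite-index subalgebras $P_0 \in \{A \rtimes \Stab i, \rL \Gamma\}$. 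The conclusion of Theorem \ref{thm.intertwine-abelian} places $\de(A)' \cap M \ovt M$ into $A \ovt A$ after a further unitary conjugation. Using that $A \subset M$ is maximal abelian and Proposition \ref{prop.about-comult}(3) to control the quasi-normalizer of $\de(A)$, the same conjugating unitary can be chosen to land $\de(A)$ itself in $A \ovt A$ while preserving $\de(\rL \Gamma) \subset \rL(\Gamma \times \Gamma)$. I then apply Theorem \ref{thm.conjugacy-actions} with Corollary \ref{cor.conjugacy-actions}(1) twice (one tensor leg at a time), with $C = \de(A)$ and $\gamma(g) = \de(u_g)$, the weak mixing hypothesis being supplied by the generalized Bernoulli action $\Gamma \actson A$. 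This yields group homomorphisms $\delta_1,\delta_2 : \Gamma \recht \Gamma$, a character $\om : \Gamma \recht \T$, and a unitary $\Omega \in M \ovt M$ realizing \eqref{eq.specialform}.

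Finally, coassociativity $(\de \ot \id)\de = (\id \ot \de)\de$ and the symmetry $\si \circ \de = \de$ translate \eqref{eq.specialform} into the $2$-cocycle and symmetry relations on $\Omega \in \rL \Lambda \ovt \rL \Lambda$ required by Theorem \ref{thm.triv-2-coc}, yielding a unitary $w \in M$ with $\Omega = \de(w^*)(w \ot w)$. A final conjugation by $w$ sends the $\Lambda$-comultiplication to the $G$-comultiplication $u_g \mapsto u_g \ot u_g$ up to the character $\om$; by Lemma \ref{lem.elemcomult} this identifies each $v_s$, up to a scalar, with a group element of $G$, producing a group isomorphism $\delta : \Lambda \recht G$ (with the trace match forcing $t = 1$). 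Setting $\Sigma := \delta^{-1}(H_0^{(I)}) \lhd \Lambda$ realizes $\Lambda = \Sigma \rtimes \Gamma$, and the $\Gamma$-equivariant $*$-isomorphism $\theta : \rL \Sigma \recht A$ is read off from $\delta$. The main obstacle will be the simultaneous conjugation in Step 2: Theorem \ref{thm.intertwine-abelian} only controls the relative commutant of $\de(A)$, so ensuring that a single unitary places $\de(A) \subset A \ovt A$ while preserving $\de(\rL \Gamma) \subset \rL(\Gamma \times \Gamma)$, and that the relative commutant hypothesis of Theorem \ref{thm.conjugacy-actions} is met in the amplified setting, will require a delicate interplay between Proposition \ref{prop.about-comult}(3) and the quasi-normalizer structure of $\de(A)$.
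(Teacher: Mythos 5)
Your outline reproduces the paper's overall three-step strategy, but several of the ways you glue the ingredients together do not work as stated. In Step 1, the inclusion $\de(\rL \Gamma) \subset P$ is unjustified: almost normality only puts $\de(\rL \Gamma_1)$ inside the quasi-normalizer $P$ of $\de(\rL \Gamma_0)$, and the higher groups $\Gamma_k$ need not quasi-normalize $\rL \Gamma_0$; after obtaining $vPv^* \subset \rL(\Gamma \times \Gamma)$ one must still bootstrap up the chain by induction (this is what Lemma \ref{lem.control}.1 is for). More seriously, your use of the conjugacy criterion is set up the wrong way round. Theorem \ref{thm.intertwine-abelian} only yields an intertwining $Cq \prec A \ovt A$ for the relative commutant $C = \de(A)' \cap (M \ovt M)^t$; it does not produce a unitary conjugating $\de(A)$ (or $C$) into $A \ovt A$, so there is no "further unitary conjugation" available at that stage. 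And applying Theorem \ref{thm.conjugacy-actions} with $C = \de(A)$ requires its condition $C' \cap p(\M_n(\C) \ot M \ovt M)p = \cZ(C)$, i.e.\ that $\de(A)$ be maximal abelian in $(M \ovt M)^t$, which is exactly what is not known. The correct input is $C = \de(A)' \cap (M \ovt M)^t$ itself: then $C' \cap \cdot = \cZ(C)$ is automatic because $\de(A) \subset C$, the intertwining $C \embed A \ovt A$ is what Theorem \ref{thm.intertwine-abelian} supplies, and the weak mixing of $(\Ad \de(u_g))_{g \in \Gamma}$ on $\cZ(C)$ is a separate step (it concerns $\cZ(C) \subset M \ovt M$, not $\de(A)$, and is proved via Proposition \ref{prop.about-comult}.3 and the weak mixing of the Bernoulli action). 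Also, Theorem \ref{thm.conjugacy-actions} should be applied once to the crossed product $(A \ovt A) \rtimes (\Gamma \times \Gamma)$; a leg-by-leg application does not fit its hypotheses, since the ambient algebra must be a crossed product of an abelian algebra by a free ergodic action. Finally, before Theorem \ref{thm.triv-2-coc} can be invoked you must first reduce $\delta_1,\delta_2$ to the identity (finite index of $\delta_i(\Gamma)$, a flip argument giving $\delta_2 = \Ad h \circ \delta_1$, then co-associativity), a reduction your write-up skips.

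The endgame is genuinely wrong. From $\Om^* \de(u_g) \Om = \om(g)\, u_g \ot u_g$ for $g \in \Gamma$ together with $\Om^* \de(A) \Om \subset A \ovt A$, Lemma \ref{lem.elemcomult} does not identify "each $v_s$ with a group element of $G$". After absorbing $\Om = \de(w^*)(w \ot w)$ it identifies only the unitaries $u_g$, $g \in \Gamma$, as canonical $\Lambda$-unitaries $v_{\rho(g)}$, and identifies $\pi^{-1}(A)$ as $\rL \Sigma$ for \emph{some} subgroup $\Sigma < \Lambda$, a priori unrelated to $H_0^{(I)}$; this yields $\Lambda \cong \Sigma \rtimes \Gamma$ together with the equivariant isomorphism $\theta : \rL \Sigma \recht A$, which is precisely the assertion of Theorem \ref{thm.main-gen} and no more. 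Your stronger conclusion that $\delta : \Lambda \recht G$ is an isomorphism cannot be reached at this level of generality: Condition \ref{cond} covers plain wreath products such as $G = H_0 \wr \PSL(n,\Z)$, and Theorem \ref{thm.counterex} produces groups $\Lambda \not\cong G$ with $\rL \Lambda \cong \rL G$. So any argument ending with $\Lambda \cong G$ under the hypotheses of Theorem \ref{thm.main-gen} must contain an error; the additional assumptions ($\Stab i \cdot j$ infinite, $|H_0|$ square-free, etc.) are only brought in later, in Theorem \ref{thm.main}, to upgrade $\Sigma$ to $H_0^{(I)}$.
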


In order to fully understand all groups $\Lambda$ for which $\rL \Lambda \cong \rL G$, we need to classify all actions of $\Gamma$ by group automorphisms of a countable abelian group $\Sigma$ such that the corresponding measure preserving action $\Gamma \actson \widehat{\Sigma}$ is conjugate with the given generalized Bernoulli action $\Gamma \actson X_0^I$ with base space $X_0 = \widehat{H_0}$. As we illustrate in Section \ref{sec.counterex}, such a classification is untractable for plain wreath products $H_0 \wr \Gamma$. If we however specialize to the case where moreover $\Stab i \cdot j$ is infinite for all $i \neq j$, we get the following full superrigidity theorem.

\begin{theorem}\label{thm.main}
Assume that $\Gamma \actson I$ satisfies Condition \ref{cond} and that $\Stab i \cdot j$ is infinite for all $i \neq j$.
Let $H_0$ be a non-trivial abelian group and define the generalized wreath product group $G := H_0 \wr_I \Gamma = H_0^{(I)} \rtimes \Gamma$.
Let $\Lambda$ be any countable group and $\pi : \rL \Lambda \recht \rL(G)^t$ a $*$-isomorphism for some $t > 0$.
\begin{itemlist}
\item In the case where $|H_0|$ is a square-free integer, we must have $t=1$ and $\Lambda \cong G$.
\item In the general case, but assuming that $\Gamma \actson I$ is transitive, we must have $t=1$ and $\Lambda \cong H_1 \wr_I \Gamma$ for some abelian group $H_1$ with $|H_1| = |H_0|$.
\item In the case where $H_0 = \Z/2\Z$ or $H_0 = \Z/3\Z$, we must have $t=1$ and there exists an isomorphism of groups $\delta : \Lambda \recht G$, a character $\om : \Lambda \recht \T$ and a unitary $w \in \rL G$ such that
$$\pi(v_s) = \om(s) \, w \, u_{\delta(s)} \, w^* \quad\text{for all}\;\; s \in \Lambda \; .$$
\end{itemlist}
\end{theorem}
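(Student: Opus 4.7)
The plan is to derive Theorem \ref{thm.main} from the more general Theorem \ref{thm.main-gen}. Applying the latter, we may assume $t=1$ and write $\Lambda = \Sigma \rtimes \Gamma$ for an infinite abelian group $\Sigma$ carrying a $\Gamma$-action $\al$ by automorphisms, together with a $\Gamma$-equivariant $*$-isomorphism $\theta : \rL\Sigma \recht A = \rL(H_0^{(I)})$ intertwining $\al$ and the generalized Bernoulli action $\si$. Passing to Pontryagin duals, $\theta$ becomes a measure-preserving $\Gamma$-equivariant isomorphism $\widehat\theta : X_0^I \recht \widehat\Sigma$ between the corresponding compact abelian groups, with $X_0 = \widehat{H_0}$. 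The remaining task is to exploit the extra hypothesis $|\Stab i \cdot j| = \infty$ for all $i \neq j$, which makes the generalized Bernoulli action 2-fold weakly mixing, in order to pin down $(\Sigma,\al)$ up to the appropriate level of rigidity.

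The first main step is to transfer, via $\theta$, the natural ``coordinate'' decomposition $A = \bigotimes_{i \in I} \rL(H_0)$ back to $\rL\Sigma$. Each canonical unitary $u_s \in \cU(\rL\Sigma)$, $s \in \Sigma$, corresponds under $\theta$ to a measurable character $\theta(u_s) : X_0^I \recht \T$; the $\Gamma$-equivariance combined with weak mixing on pairs forces $\theta(u_s)$ to factor through finitely many coordinates and to lie, up to a scalar, in the canonical character group $H_0^{(I)} \subset \cU(A)$. Organizing these characters along the $\Gamma$-orbits in $I$ produces a $\Gamma$-equivariant group decomposition $\Sigma \cong \bigoplus_{\cO} H_1^\cO$, one summand per orbit $\cO$, each $H_1^\cO$ abelian. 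The measure-preservation of $\widehat\theta$ and the finite per-point structure on both sides forces $|H_1^\cO|^{|\cO|} = |H_0|^{|\cO|}$, whence $|H_1^\cO| = |H_0|$ for every orbit. When $\Gamma \actson I$ is transitive, this is precisely $\Sigma \cong H_1^{(I)}$ with $|H_1|=|H_0|$, yielding Part 2. When $|H_0|=n$ is square-free, the uniqueness of the abelian group of order $n$ gives $H_1^\cO \cong H_0$ for every $\cO$, so $\Sigma \cong H_0^{(I)}$ as $\Gamma$-groups and $\Lambda \cong G$, proving Part 1 even without transitivity.

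For Part 3 the goal is to promote the abstract isomorphism $\delta : \Lambda \recht G$ to a spatial one, finding $w \in \cU(\rL G)$ and a character $\om$ with $\pi(v_s) = \om(s)\, w u_{\delta(s)} w^*$. When $H_0 = \Z/2\Z$ or $\Z/3\Z$, the non-trivial characters of $H_0$ take values in $\{-1\}$ or in the two primitive cube roots of unity, so the set $\{u_g : g \in H_0^{(I)} \setminus \{e\}\} \cdot \T$ admits an intrinsic characterization inside $\cU(A)$ as the non-scalar unitaries $u$ with $u^n = 1$ whose $\Gamma$-orbit is supported on a single coordinate. The $\Gamma$-equivariant bijection $\theta$ (combined with the orbit analysis above) then forces $\theta(\T \Sigma) = \T H_0^{(I)}$, which after a unitary perturbation implementing the abstract identification of orbit structures yields the required $w$ and $\om$. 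The main obstacle, and precisely the reason Part 3 is restricted to $H_0 = \Z/2\Z$ or $\Z/3\Z$, is this last spatial promotion: for larger $H_0$ the character group $\widehat{H_0}$ carries non-trivial automorphisms that can ``rotate'' characters $\Gamma$-equivariantly, and Theorem \ref{thm.counterex} shows that non-spatial implementations genuinely occur.
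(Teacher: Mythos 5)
The proposal has a genuine gap at its first main step. You assert that $\Gamma$-equivariance plus weak mixing forces each $\theta(u_s)$, $s \in \Sigma$, to lie \emph{up to a scalar in the canonical group $H_0^{(I)} \subset \cU(A)$}. A trace-preserving $*$-isomorphism $\theta : \rL \Sigma \recht A$ gives a measure isomorphism $\widehat\Sigma \cong X_0^I$, not a group isomorphism, so $\theta(u_s)$ is merely a unitary function on $X_0^I$ and there is no reason for it to be a character of $X_0^I$; and in fact the claim is false under the hypotheses of the theorem. Take $H_0 = \Z/4\Z$, $\Sigma = (\Z/2\Z \times \Z/2\Z)^{(I)}$ and $\theta = \bigotimes_{i \in I} \theta_0$ with $\theta_0 : \rL(\Z/2\Z \times \Z/2\Z) \recht \rL(\Z/4\Z)$ any $*$-isomorphism of these two copies of $\C^4$: this $\theta$ is $\Gamma$-equivariant, so $\rL\bigl((\Z/2\Z \times \Z/2\Z) \wr_I \Gamma\bigr) \cong \rL\bigl((\Z/4\Z)\wr_I\Gamma\bigr)$ even when $\Stab i \cdot j$ is infinite for all $i \neq j$. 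If your claim held, $s \mapsto$ (the group part of $\theta(u_s)$) would be an injective homomorphism $\Sigma \recht H_0^{(I)}$ whose image generates $A$, forcing $\Sigma \cong H_0^{(I)}$ -- contradicting this example, and also contradicting your own Part 2, where $H_1 \not\cong H_0$ is explicitly allowed. (Citing Theorem \ref{thm.counterex} as the obstruction is also off target: that result concerns plain wreath products, where the hypothesis on $\Stab i \cdot j$ fails; the relevant obstruction here is the non-group-like isomorphism $\C^4 \cong \C^4$ above.) Your later cardinality identity $|H_1^\cO|^{|\cO|} = |H_0|^{|\cO|}$ is also vacuous, since all orbits $\cO$ are infinite.

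The correct mechanism, which is the one the paper uses, operates at the level of subalgebras rather than individual unitaries. Set $B_i := \theta^{-1}(\rL H_0^i)$. For $b \in B_i$, the element $(\theta \ot \theta)\de(b)$ (with $\de$ the comultiplication of $\rL\Lambda$ restricted to $\rL\Sigma$) is fixed by $\si_g \ot \si_g$ for all $g \in \Stab i$; since $\Stab i \cdot j$ is infinite for $j \neq i$, the action of $\Stab i$ on the remaining coordinates is weakly mixing and this fixed-point algebra is $\rL H_0^i \ovt \rL H_0^i$, so $\de(B_i) \subset B_i \ovt B_i$, and Lemma \ref{lem.elemcomult} gives $B_i = \rL \Sigma_i$ for subgroups $\Sigma_i < \Sigma$ with $\Sigma = \bigoplus_i \Sigma_i$. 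The count $|\Sigma_i| = |H_0|$ then comes from $\dim \rL\Sigma_i = \dim \rL H_0^i$, which yields Parts 1 and 2. Group-likeness of individual unitaries -- what your step presupposes from the start -- is only available for $H_0 = \Z/2\Z$ or $\Z/3\Z$, because only for $\C^2$ and $\C^3$ is every $*$-isomorphism $\rL\Sigma_i \recht \rL H_0^i$ automatically of the form $\pi_{\gamma_i}\circ\pi_{\rho_i}$ with $\rho_i$ a group isomorphism and $\gamma_i$ a character; assembling these equivariantly is exactly what produces the unitary $w$ and the character in Part 3. So the spatial statement is not a separate ``promotion'' step to be added at the end; it is precisely the group-like property whose failure for larger $H_0$ invalidates your first step.
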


\begin{example}\label{ex.ourex}
If $\Gamma \actson I$ is defined as in Theorem \ref{thm.specialmain}, it is easy to check that all conditions of Theorem \ref{thm.main} are indeed satisfied, using the subgroup $\Gamma_0 < \Gamma$ (that we put in an arbitrary position of $\Gamma_0^{(S)}$) and the chain of normal subgroups $\Gamma_0 \lhd \Gamma_0^{(S)} \lhd \Gamma$.

Define $\Gamma = \SL(2,\Z) \ltimes \Z^2$. Let $A \in \SL(2,\Z)$ be any matrix whose eigenvalues have modulus different from $1$. Define the subgroup $\Gamma_A < \SL(2,\Z)$ consisting of the matrices $B$ such that $B A B^{-1} = A^{\pm 1}$. View $\Gamma_A$ as a subgroup of $\Gamma$.
Then, the action $\Gamma \actson \Gamma/\Gamma_A$ satisfies all conditions of Theorem \ref{thm.main} with $\kappa =2$.

More generally, whenever the icc group $\Gamma$ admits an infinite almost normal subgroup with the relative property (T) and $S < \Gamma$ is an infinite amenable almost malnormal subgroup, then $\Gamma \actson \Gamma/S$ satisfies the conditions of Theorem \ref{thm.main} with $\kappa = 2$. Examples of infinite amenable almost malnormal subgroups of $\PSL(n,\Z)$ are provided in \cite[Example 7.4]{PV06}.
\end{example}

\subsection*{Proof of Theorem \ref{thm.main-gen}}\mbox{}

Fix $\Gamma \actson I$ satisfying Condition \ref{cond}. Choose a non-trivial abelian group $H_0$ and put $A_0 := \rL(H_0)$, $A := \rL(H_0^{(I)})$. Denote $M = \rL(H_0 \wr_I \Gamma) = A \rtimes \Gamma$.

We first prove that the action $\Gamma \actson A$ is essentially free and ergodic. It suffices to prove that every $g \in \Gamma - \{e\}$ moves infinitely many $i \in I$. Choose $n \in \N$. For every $g \in \Gamma$, denote $\Fix g := \{i \in I \mid g \cdot i = i \}$. It suffices to prove that $\cG_n := \{g \in \Gamma \mid |I - \Fix g| \leq n\}$ equals $\{e\}$. Since $h \cG_n h^{-1} = \cG_n$ for all $h \in \Gamma$ and since $\Gamma$ is icc, it suffices to prove that $\cG_n$ is finite. Choose a finite subset $\cF \subset I$ such that $|\cF| = \kappa + n$. Then,
$$\cG_n \subset \bigcup_{\cF_0 \subset \cF \; , \; |\cF_0| = \kappa} \Stab \cF_0 \; .$$
Since all $\Stab \cF_0$ are finite, $\cG_n$ is finite as well. We have proven that $\Gamma \actson A$ is essentially free. Because $\Gamma \cdot i$ is infinite for all $i \in I$, the action $\Gamma \actson A$ is ergodic as well.

Assume that $\rL \Lambda = M^t$ for some countable group $\Lambda$. The amplification of the comultiplication on $\rL \Lambda$ yields a unital $*$-homomorphism $\de : M \recht (M \ovt M)^t$. To avoid unnecessary notational complexity, in the first steps of the proof, until step \ref{step3} included, we will do as if $t \leq 1$ and consider $\de : M \recht p(M \ovt M)p$ for some projection $p \in M \ovt M$. The reader can check easily that this notational simplification does not hide any essential steps of the argument.

\begin{step}\label{step1}
There exists $v \in M \ovt M$ with $v^* v = p$ and $v \de(\rL \Gamma) v^* \subset \rL( \Gamma \times \Gamma)$.
\end{step}
\begin{proof}
Take a chain of subgroups $\Gamma_0 < \Gamma_1 < \cdots < \Gamma_n = \Gamma$ as in Condition \ref{cond}. Note that $\Gamma_1$ is non-amenable.
Put $Q = \de(\rL \Gamma_0)$ and denote by $P$ the quasi-normalizer of $Q$ inside $p (M \ovt M)p$. Note that $\de(\rL \Gamma_1) \subset P$. In the case where $\Gamma_0 < \Gamma_1$ has the relative property (T), $Q \subset P$ has the relative property (T). In the case where the centralizer of $\Gamma_0$ inside $\Gamma_1$ is non-amenable, Proposition \ref{prop.about-comult} implies that the relative commutant $Q' \cap P$ is strongly non-amenable relative to $M \ot 1$.

By Proposition \ref{prop.about-comult}.1, $Q \not\embed M \ot 1$. By Proposition \ref{prop.about-comult}.4 and because $\de(\Gamma_1) \subset P$, we have $P \not\embed M \ovt (A \rtimes \Stab i)$ for all $i \in I$. So, Corollary \ref{cor.malleable-rigid} yields $v \in M \ovt M$ with $v^* v = p$ and $v P v^* \subset M \ovt \rL \Gamma$.

Repeating the same argument and applying Corollary \ref{cor.malleable-rigid} with $N = \rL \Gamma$, we find $w \in M \ovt \rL \Gamma$ such that $w^* w = vv^*$ and $w v P v^* w^* \subset \rL \Gamma \ovt \rL \Gamma$.

We write $v$ instead of $wv$, so that $v^* v = p$ and $v P v^* \subset \rL(\Gamma \times \Gamma)$. In particular, $v \de(\rL \Gamma_1) v^* \subset \rL(\Gamma \times \Gamma)$. Write $P_k := v \de(\rL \Gamma_k) v^*$. We prove by induction on $k$ that automatically $P_k \subset \rL(\Gamma \times \Gamma)$. For $k =1$ the statement is already proven. Assume that $P_k \subset \rL(\Gamma \times \Gamma)$ for some $1 \leq k \leq n-1$. We already observed that $P_1 \not\embed \rL(\Gamma \times \Stab i)$ and $P_1 \not\embed \rL(\Stab i \times \Gamma)$ so that, a fortiori, the same holds for $P_k$ instead of $P_1$. By Lemma \ref{lem.control}.1 and because $P_k \subset P_{k+1}$ is quasi-regular, it follows that $P_{k+1} \subset \rL(\Gamma \times \Gamma)$.

Since $\Gamma = \Gamma_n$, we have proven that $v \de(\rL \Gamma) v^* \subset \rL( \Gamma \times \Gamma)$.
\end{proof}

From now on, we replace $\de : M \recht p(M \ovt M)p$ by $v \de(\, \cdot \,) v^*$ and $p$ by $vv^* \in \rL(\Gamma \times \Gamma)$, so that $\de(\rL \Gamma) \subset p\rL(\Gamma \times \Gamma)p$.

Denote $C := \de(A)' \cap p(M \ovt M) p$.

\begin{step}\label{step2}
We have $C \embed A \ovt A$.
\end{step}
\begin{proof}
We apply Theorem \ref{thm.intertwine-abelian} to the abelian von Neumann subalgebra $D := \Delta(A)$ of $p (M \ovt M)p$ that is normalized by the unitaries $(\Delta(u_g))_{g \in \Gamma}$ that belong to $p \rL(\Gamma \times \Gamma) p$. So we have to check the four assumptions (\ref{assum1})-(\ref{assum4}) of Theorem \ref{thm.intertwine-abelian}.

Since $A$ is diffuse, Proposition \ref{prop.about-comult}.1 says that $\Delta(A) \not\prec M \ot 1$ and $\Delta(A) \not\prec 1 \ot M$. So assumption (\ref{assum1}) holds.

The quasi-normalizer of $\Delta(A)$ inside $p(M \ovt M)p$ contains $\Delta(M)$. Since for every $i \in I$ we have that $\Stab i \subset \Gamma$ has infinite index, Proposition \ref{prop.about-comult}.2 implies that $\Delta(M) \not\prec M \ovt (A \rtimes \Stab i)$ and $\Delta(M) \not\prec (A \rtimes \Stab i) \ovt M$. So assumption (\ref{assum2}) holds. Since also $\rL \Gamma \subset M$ has infinite index, for the same reason assumption (\ref{assum3}) holds.

Finally, since $\Gamma$ is non-amenable and $\Stab i$ is amenable for every $i \in I$, Proposition \ref{prop.about-comult}.4 implies that $\Delta(\rL \Gamma) \not\prec \rL (\Gamma \times \Stab i)$ and $\Delta(\rL \Gamma) \not\prec \rL (\Stab i \times \Gamma)$. So also assumption (\ref{assum4}) holds.

The conclusion of Step \ref{step2} now follows from Theorem \ref{thm.intertwine-abelian}.
\end{proof}

Since $C = p(M \ovt M)p \cap \de(A)'$, the unitaries $\de(u_g)$ normalize $C$ and define an action $(\be_g)_{g \in \Gamma}$ of $\Gamma$ on $C$ given by $\be_g(d) = \de(u_g) d \de(u_g)^*$ for all $g \in \Gamma$, $d \in C$.

\begin{step}\label{step3}
If $\cH \subset \rL^2(C)$ is a finite dimensional $(\be_g)_{g \in \Gamma}$-invariant subspace, we have $\cH \subset \C 1$.
\end{step}
\begin{proof}
Define $\cK \subset p \rL^2(M \ovt M) p$ as the norm closed linear span of $\cH \de(M)$. Then, $\de(A) \cK \subset \cK$ because $\cH$ and $\de(A)$ commute. Also, $\de(u_g) \cK = \cK$ for all $g \in \Gamma$ because $\cH$ is globally invariant under $(\be_g)_{g \in \Gamma}$. So, $\cK$ is a $\de(M)$-$\de(M)$-bimodule which, by construction, is finitely generated as a right $\de(M)$-module. By Proposition \ref{prop.about-comult}.3, we have $\cK \subset \de(\rL^2(M))$ and hence $\cH \subset \de(\rL^2(M))$. Since elements of $\cH$ commute with $\de(A)$, we have $\cH \subset \de(\rL^2(A))$. Since the action of $\Gamma$ on $A$ is weakly mixing, the global invariance under $(\be_g)_{g \in \Gamma}$ forces $\cH \subset \C 1$.
\end{proof}

\begin{step}\label{step5}
We have $t = 1$ and there exists a unitary $\Omega \in M \ovt M$, a group homomorphism $\delta : \Gamma \recht \Gamma \times \Gamma$ and a character $\omega : \Gamma \recht \T$ such that
\begin{equation}\label{eq.adrian}
\Om^* \de(u_g) \Om = \om(g) \, u_{\delta(g)} \quad\text{for all}\;\; g \in \Gamma \quad\text{and}\quad \Om^* \de(A) \Om \subset A \ovt A \; .
\end{equation}
\end{step}
\begin{proof}
We apply Corollary \ref{cor.conjugacy-actions} to the crossed product $M \ovt M = (A \ovt A) \rtimes (\Gamma \times \Gamma)$. We no longer make the simplifying assumption that $t \leq 1$. So, take a projection $p \in \M_n(\C) \ovt M \ovt M$ with $(\Tr \ot \tau \ot \tau)(p) = t$. The amplified comultiplication is a unital $*$-homomorphism $\de : M \recht p(\M_n(\C) \ovt M \ovt M)p$ and by step \ref{step1} we may assume, after a unitary conjugacy, that $p \in \M_n(\C) \ot \rL(\Gamma \times \Gamma)$ and $\de(\rL\Gamma) \subset p (\M_n(\C) \ot \rL (\Gamma \times \Gamma))p$. Put $C = \de(A)' \cap p(\M_n(\C) \ovt M \ovt M)p$. Since $A$ is abelian, $C' \cap p(\M_n(\C) \ovt M \ovt M)p = \cZ(C)$. By step \ref{step2}, $C \embed A \ovt A$. By step \ref{step3}, the action $(\Ad \de(u_g))_{g \in \Gamma}$ is weakly mixing on $\cZ(C)$. Even more so, $\C 1$ is the only finite-dimensional globally $(\Ad \de(u_g))_{g \in \Gamma}$-invariant subspace of $C$. Since $\Gamma$ is an icc group, Proposition \ref{prop.about-comult}.2 implies that $\de(M) \not\embed M \ovt (A \rtimes \Centr g)$ and $\de(M) \not\embed (A \rtimes \Centr g) \ovt M$. Because the von Neumann algebra generated by $C$ and $\de(u_g), g \in \Gamma$ contains $\de(M)$, all conditions of Theorem \ref{thm.conjugacy-actions}, together with the extra condition 1 in Corollary \ref{cor.conjugacy-actions}, are satisfied.

By Corollary \ref{cor.conjugacy-actions} we get that $t = 1$ and that there  exists a unitary $\Omega \in M \ovt M$, a group homomorphism $\delta : \Gamma \recht \Gamma \times \Gamma$ and a character $\omega : \Gamma \recht \T$ such that $\Om^* \de(u_g) \Om = \om(g) \, u_{\delta(g)}$ and $\Om^* C \Om = A \ovt A$. In particular, $\Om^* \de(A) \Om \subset A \ovt A$.
\end{proof}

\begin{step}
End of the proof of Theorem \ref{thm.main-gen}.
\end{step}

\begin{proof}
Take $\Om,\delta,\omega$ as in step \ref{step5}. After step \ref{step1}, we decided to replace $\de$ by $\Ad v \circ \de$. From now on, $\de : \rL \Lambda \recht \rL \Lambda \ovt \rL \Lambda$ is again the comultiplication. The conclusion of step \ref{step5} remains of course true, replacing $\Om$ by $v^* \Om$.

Write $\delta(g) = (\delta_1(g),\delta_2(g))$. By Proposition \ref{prop.about-comult}.2, $\de(M) \not\embed M \ovt (A \rtimes S)$ whenever $S < \Gamma$ is of infinite index. Hence, the subgroups $\delta_i(\Gamma)$, $i=1,2$, are of finite index in $\Gamma$.

Applying the flip to \eqref{eq.adrian}, it follows that $u_{\delta_1(g)} \ot u_{\delta_2(g)}$ and $u_{\delta_2(g)} \ot u_{\delta_1(g)}$ are unitarily conjugate inside $M \ovt M$. Since $\delta_i(\Gamma) \subset \Gamma$ has finite index, there must exist $h \in \Gamma$ such that $\delta_2(g) = h \delta_1(g) h^{-1}$ for all $g \in \Gamma$. Replacing $\Om$ by $\Om (1 \ot u_h)$, we may assume that $\delta_1 = \delta_2$ and we write $\delta$ instead of $\delta_1,\delta_2$.

Define $\Gamma' = \delta(\Gamma)$. The co-associativity of $\de$ implies that $(u_{\delta(g)} \ot u_{\delta(g)} \ot u_g)_{g \in \Gamma'}$ and $(u_g \ot u_{\delta(g)} \ot u_{\delta(g)})_{g \in \Gamma'}$ are unitarily conjugate in $M \ovt M \ovt M$. Since $\Gamma' < \Gamma$ has finite index, it follows that there exists $h \in \Gamma$ such that $\delta(g) = h g h^{-1}$ for all $g \in \Gamma'$. Then automatically, $\delta(g) = h g h^{-1}$ for all $g \in \Gamma$. Replacing $\Om$ by $\Om(u_h \ot u_h)$, we may assume that
$$\Om^*\de(u_g) \Om = \om(g) \; u_g \ot u_g \quad\text{for all}\;\; g \in \Gamma \; .$$
If $\si(a \ot b) = b \ot a$ denotes the flip map, it follows that $\Om^* \si(\Om)$ commutes with all $u_g \ot u_g$, $g \in \Gamma$ and hence, is scalar. Similarly,
$$(\Om \ot 1)^* (\de \ot \id)(\Om)^* (\id \ot \de)(\Om)(1 \ot \Om) \quad\text{commutes with all}\quad u_g \ot u_g \ot u_g \; , \; g \in \Gamma$$
and hence, is scalar. By Theorem \ref{thm.triv-2-coc}, there exists a unitary $w \in M$ such that $\Om = \de(w^*)(w \ot w)$.

To make the end of the argument more clear, we write again explicitly the isomorphism $\pi : \rL \Lambda \recht \rL(G)^t$, instead of the implicit identification $\rL \Lambda = \rL(G)^t$. So far, we have shown that $t=1$ and we have found a unitary $w \in \rL G$ and a character $\om : \Gamma \recht \T$ such that after replacing $\pi$ by $\pi_{\om}^{-1} \circ \Ad w^* \circ \pi$, we have
$$(\pi \ot \pi)\de(\pi^{-1}(A)) = A \ovt A \quad\text{and}\quad (\pi \ot \pi)\de(\pi^{-1}(u_g)) = u_g \ot u_g$$
for all $g \in \Gamma$. By Lemma \ref{lem.elemcomult} we find an abelian subgroup $\Sigma < \Lambda$ such that $\pi^{-1}(A) = \rL \Sigma$ and an injective group homomorphism $\rho : \Gamma \recht \Lambda$ such that $\pi^{-1}(u_g) = v_{\rho(g)}$. By construction, $\Ad v_{\rho(g)}$ normalizes $\rL \Sigma$ and hence, $\Ad \rho(g)$ normalizes $\Sigma$. We have found an action of $\Gamma$ by automorphisms of $\Sigma$ and an isomorphism of groups $\delta : \Lambda \recht \Sigma \rtimes \Gamma$ satisfying $\delta(s \rho(g)) = (s,g)$ for all $s \in \Sigma$, $g \in \Gamma$. Moreover, the $*$-isomorphism $\pi \circ \pi_{\delta^{-1}} : \rL(\Sigma) \rtimes \Gamma \recht A \rtimes \Gamma$ maps $\rL \Sigma$ onto $A$ and is the identity on $u_g, g \in \Gamma$. We define $\theta : \rL \Sigma \recht A$ as the restriction of $\pi \circ \pi_{\delta^{-1}}$ to $\rL \Sigma$, ending the proof of the theorem.
\end{proof}

This ends the proof of Theorem \ref{thm.main-gen}.\hfill\qedsymbol

\subsection*{Proof of Theorem \ref{thm.main}}\mbox{}

By Theorem \ref{thm.main-gen}, we have $t=1$ and $\pi = \Ad w \circ \pi_\om \circ \pi_\theta \circ \pi_\delta$, where $w \in \rL G$ is a unitary, $\delta : \Lambda \recht \Sigma \rtimes \Gamma$ is a group isomorphism, $\om : \Lambda \recht \T$ is a character and $\theta : \rL \Sigma \recht A$ is a $*$-isomorphism satisfying $\theta \circ \al_g = \si_g \circ \theta$ for all $g \in \Gamma$.

For all $i \in I$, put $\Gamma_i := \Stab i$. Recall that $A = \rL(H_0^{(I)})$. Denote by $H_0^i < H_0^{(I)}$ the copy of $H_0$ in position $i \in I$. Define the subalgebra $B_i \subset \rL \Sigma$ given by $B_i := \theta^{-1}\bigl( \rL H_0^i \bigr)$.
We claim that $\de(B_i) \subset B_i \ovt B_i$. If $b \in B_i$, the element $(\theta \ot \theta)\de(b)$ is fixed under the automorphisms $\si_g \ot \si_g$, $g \in \Stab i$. Since $\Stab i \cdot j$ is infinite for all $j \neq i$, this implies that $(\theta \ot \theta)\de(b) \in \rL(H_0^i \times H_0^i)$. Hence, $\de(b) \in B_i \ovt B_i$. By Lemma \ref{lem.elemcomult} we find subgroups $\Sigma_i < \Sigma$ such that $B_i = \rL \Sigma_i$. By construction, the subalgebras $B_i \subset \rL \Sigma$ are independent and generate $\rL \Sigma$. Hence $\Sigma = \bigoplus_{i \in I} \Sigma_i$. Denote by $\theta_i$ the restriction of $\theta$ to $\rL \Sigma_i$. So, $\theta_i : \rL \Sigma_i \recht \rL H_0^i$ is a $*$-isomorphism.

In particular, $\Sigma_i$ is an abelian group of order $|H_0|$. So, if $|H_0|$ is a square-free integer, necessarily $\Sigma_i \cong H_0$ for every $i \in I$ and we easily conclude that $\Lambda \cong G$. For general non-trivial abelian groups $H_0$, but assuming that $\Gamma \actson I$ is transitive, choose $i_0 \in I$ and put $H_1 := \Sigma_{i_0}$. We have proven that $\Lambda \cong H_1 \wr_I \Gamma$.

In the specific case where $H_0 = \frac{\Z}{2\Z}$ or $\frac{\Z}{3\Z}$, every algebra isomorphism $\rL \Sigma_i \recht \rL H_0^i$ is group-like. So, we find characters $\gamma_i : H_0^i \recht \T$ and group isomorphisms $\rho_i : \Sigma_i \recht H_0^i$ such that $\theta_i = \pi_{\gamma_i} \circ \pi_{\rho_i}$.

By construction, $\gamma_{g \cdot i} = \gamma_i \circ \si_g^{-1}$, $\al_g(\Sigma_i) = \Sigma_{g \cdot i}$ and $\sigma_g \circ \rho_i = \rho_{g \cdot i} \circ \al_g$. So, all $\gamma_i$ combine into a $(\si_g)_{g \in \Gamma}$-invariant character $\gamma : H_0^{(I)} \recht \T$ and all $\rho_i$ combine into an group isomorphism $\rho : \Sigma \recht H_0^{(I)}$ satisfying $\rho \circ \al_g = \si_g \circ \rho$ for all $g \in \Gamma$. By construction, $\theta = \pi_\gamma \circ \pi_\rho$. We extend $\gamma$ to a character $\gamma : G \recht \T$ by putting $\gamma(g) = 1$ for all $g \in \Gamma$. We extend $\rho$ to a group isomorphism $\rho : \Sigma \rtimes \Gamma \recht G$ by putting $\rho(g) = g$ for all $g \in \Gamma$. By construction, $\pi_\theta = \pi_\gamma \circ \pi_\rho$. We have proven that
$$\pi = \Ad w \circ \pi_{\om \, \gamma} \circ \pi_{\rho \circ \delta} \; .$$
This ends the proof of Theorem \ref{thm.main}.\hfill\qedsymbol

\section{Counterexamples for plain wreath products: proof of Theorem \ref{thm.counterex}} \label{sec.counterex}

Assume that $\Gamma$ is a countable group and $\Z \hookrightarrow \Gamma$ an embedding. Let $H_0$ be a non-trivial finite abelian group. Using the \emph{co-induction construction,} we construct a new group $\Lambda$ such that $\rL (\Lambda) \cong \rL (H_0 \wr \Gamma)$.

Define the countable abelian group $\Sigma_0 := \Z[|H_0|^{-1}]$ and denote by $\al$ the automorphism of $\Sigma_0$ given through multiplication by $|H_0|$. We also denote by $(\al_k)_{k \in \Z}$ the corresponding action of $\Z$ by group automorphisms of $\Sigma_0$ and then, by automorphisms of $\rL(\Sigma_0)$. We claim that $\al$ is conjugate with a Bernoulli action with base space $\{1,\ldots,|H_0|\}$ equipped with the normalized counting measure. View $\rL^\infty(\T) \cong \rL \Z \subset \rL(\Sigma_0)$. Identify $\rL(H_0) \cong \ell^\infty(\{1,\ldots,|H_0|\})$ with the subalgebra of $\rL^\infty(\T)$ that consists of the functions that are constant on the intervals $\{\exp(2 \pi i t) \mid t \in [(j-1)/|H_0|,j/|H_0|) \}$. After all these identifications, one checks that the subalgebras $\al_k(\rL(H_0))$, $k \in \Z$ of $\rL(\Sigma_0)$ are independent and generate $\rL(\Sigma_0)$. This results into a $*$-isomorphism
$$\theta_0 : \rL(\Sigma_0) \recht \rL(H_0^{(\Z)}) \quad\text{satisfying}\quad \theta_0 \circ \al_k = \si_k \circ \theta_0 \;\;\text{for all}\;\; k \in \Z \; .$$
Here, $(\si_k)_{k \in \Z}$ denotes the Bernoulli shift on $\rL(H_0^{(\Z)})$.

We now perform the co-induction construction. Choose representatives $I \subset \Gamma$ for the coset space $\Gamma / \Z$. So, the multiplication map $I \times \Z \recht \Gamma$ is a bijection. We get an action $\Gamma \actson I : (g,i) \mapsto g \cdot i$ and a map $\om : \Gamma \times I \recht \Z$ such that $g i = (g\cdot i) \om(g,i)$ for all $g \in \Gamma$, $i \in I$. The map $\om$ is a $1$-cocycle: $\om(gh,i) = \om(g,h\cdot i) \om(h,i)$ for all $g,h \in \Gamma$ and $i \in I$. Define $\Sigma := \Sigma_0^{(I)}$ and denote by $\pi_i : \Sigma_0 \recht \Sigma$ the embedding in position $i$. Then, $\Gamma$ acts on $\Sigma$ by group automorphisms $(\beta_g)_{g \in \Gamma}$ defined as $\beta_g \circ \pi_i = \pi_{g \cdot i} \circ \al_{\om(g,i)}$.

Put $\Lambda = \Sigma \rtimes \Gamma$. Observe that $\Lambda$ is torsion free whenever $\Gamma$ is torsion free. We claim that $\rL \Lambda \cong \rL(H_0 \wr \Gamma)$.

Identifying $(\rL(H_0^{(\Z)}))^I \cong \rL(H_0^{(\Gamma)})$ through the multiplication map $I \times \Z \recht \Gamma$, the formula $\theta = \bigotimes_{i \in I} \theta_0$ defines a $*$-isomorphism
$$\theta : \rL(\Sigma) \recht \rL(H_0^{(\Gamma)}) \quad\text{satisfying}\quad \theta \circ \beta_g = \si_g \circ \theta \;\;\text{for all}\;\; g \in \Gamma \; .$$
But then, $\theta$ extends to an isomorphism of the corresponding crossed product II$_1$ factors that are isomorphic with $\rL(\Lambda)$ and $\rL(H_0 \wr \Gamma)$ respectively. This proves the claim.

We have already proven that for $\Gamma$ torsion free, there exists a torsion free group $\Lambda$ satisfying $\rL \Lambda \cong \rL(H_0 \wr \Gamma)$.

To conclude the proof of Theorem \ref{thm.counterex} we show that by varying the initial embedding $\Z \hookrightarrow \Gamma := \PSL(n,\Z)$, the above construction provides infinitely many non-isomorphic groups $\Lambda$. Assume that $\Lambda = \Sigma \rtimes \Gamma$ and $\Lambda' = \Sigma' \rtimes \Gamma$ are constructed as above from embeddings $\eta : \Z \recht \Gamma$ and $\eta' : \Z \recht \Gamma$. It suffices to prove the following.

{\bf Claim.} If for every automorphism $\delta \in \Aut(\Gamma)$, the intersection $\delta(\eta(\Z)) \cap \eta'(\Z)$ is reduced to $\{1\}$, then $\Lambda \not\cong \Lambda'$.

Assume that $\lambda : \Lambda \recht \Lambda'$ is an isomorphism of groups. Since $\PSL(n,\Z)$ has no normal abelian subgroups except from $\{1\}$, it follows that $\lambda(\Sigma) = \Sigma'$. Hence, $\lambda$ is of the form $\lambda(x,g) = (\ldots,\delta(g))$ for some automorphism $\delta$ of $\Gamma = \PSL(n,\Z)$. Since $\Sigma$ and $\Sigma'$ are abelian groups, it follows that $\lambda|_{\Sigma} \circ \beta_g = \beta_{\delta(g)} \circ \lambda|_{\Sigma}$ for all $g \in \Gamma$. Denote by $i \in I$ the coset $\eta(\Z)$ of the identity element. Take a non-trivial element $x \in \Sigma_0$. Take a finite subset $\cF \subset \Gamma/\eta'(\Z)$ such that $\lambda(\pi_i(x)) \subset \Sigma_0^\cF$. We prove that $\lambda(\pi_i(\Sigma_0)) \subset \Sigma_0^\cF$. Choose $y \in \Sigma_0$. By construction, we can find $z \in \Sigma_0$ such that both $x$ and $y$ are a multiple of $z$. So, $\lambda(\pi_i(x))$ is a multiple of $\lambda(\pi_i(z))$. Since $\Sigma_0$ is torsion free and $\lambda(\pi_i(x)) \in \Sigma_0^\cF$, it follows that $\lambda(\pi_i(z)) \in \Sigma_0^\cF$. But then, $\lambda(\pi_i(y)) \in \Sigma_0^\cF$ as well.

Since the subgroup $\pi_i(\Sigma_0)$ is globally invariant under $\eta(\Z)$, it follows that $\lambda(\pi_i(\Sigma_0))$ is globally invariant under $\delta(\eta(\Z))$. But $\lambda(\pi_i(\Sigma_0)) \subset \Sigma_0^{\cF}$. Hence, the action of $\delta(\eta(\Z))$ on $\Gamma/\eta'(\Z)$ has at least one finite orbit. Applying the assumption to the automorphism $\Ad g \circ \delta$, we have $g \delta(\eta(\Z)) g^{-1} \cap \eta'(\Z) = \{1\}$ for all $g \in \Gamma$, so that $\delta(\eta(\Z))$ acts freely on $\Gamma/\eta'(\Z)$. We have reached a contradiction.

\begin{remark}\label{rem.ex-isom}
There are essentially two sources of unexpected isomorphisms between II$_1$ factors. The first one is Connes' uniqueness theorem for amenable II$_1$ factors \cite{Co75} implying that all $\rL \Gamma$ for $\Gamma$ amenable icc, are isomorphic. Secondly, Voiculescu's free probability theory leads to striking isomorphisms between von Neumann algebras constructed as free products, see e.g.\ \cite{Vo89} and the later developments in \cite{Dy92a,Dy92b,DR98}. As an illustration we provide the following list of isomorphic group factors $\rL G$.

1. Since infinite tensor products of II$_1$ factors are McDuff, it follows that whenever $G = \bigoplus_{i=1}^\infty \Lambda_i$ is the infinite direct sum of icc groups $\Lambda_i$, then $\rL G \cong \rL(\Gamma \times G)$ for all icc amenable groups $\Gamma$.

2. We have that $\rL(\Gamma_1 * \cdots * \Gamma_n) \cong \rL \F_n$ whenever $\Gamma_1,\ldots,\Gamma_n$ are infinite amenable groups and $n \geq 2$. By \cite[Corollary 5.3]{Dy92b} the statement holds for $n=2$ and next, by induction,
\begin{align*}
\rL(\Gamma_1 * \cdots * \Gamma_n) & \cong \rL(\Gamma_1 * \cdots * \Gamma_{n-2}) * \rL(\Gamma_{n-1} * \Gamma_n)
 \cong \rL(\Gamma_1 * \cdots * \Gamma_{n-2}) * \rL(\F_2)\\
& \cong \rL(\Gamma_1 * \cdots * \Gamma_{n-2} * \Z) * \rL(\Z) \cong \rL(\F_{n-1}) * \rL(\Z) \cong \rL(\F_n) \; .
\end{align*}
In the same vein, by \cite[Theorem 1.5]{DR98} it follows that whenever $G = \Lambda_1 * \Lambda_2 * \cdots$ is the infinite free product of non-trivial groups $\Lambda_i$, then $\rL(G) \cong \rL(\F_\infty * G)$.

3. The subtlety of how $\rL G$ depends on $G$ is nicely illustrated by the following remark due to Ozawa \cite{Oz04}. Fix a non-amenable group $\Gamma$ and an infinite group $\Lambda$. Consider $G_n := \F_\infty * (\Gamma \times \Lambda)^{*n}$.
\begin{itemlist}
\item If $\Lambda$ is abelian and $\rL\Gamma \cong \M_2(\C) \ovt \rL(\Gamma)$, then all $\rL(G_n)$ are isomorphic, though non-isomorphic with $\rL \F_\infty$.
\item If $\Gamma,\Lambda$ are icc (and still $\Gamma$ non-amenable), then all $\rL(G_n)$ are non-isomorphic.
\end{itemlist}
The reason for this is the following. Fix arbitrary von Neumann algebras $P,Q$ equipped with faithful normal tracial states. Consider the II$_1$ factors $N_n := \rL \F_\infty * (P \ovt Q)^{*n}$. If $P \cong \M_2(\C) \ot P$ and if $Q$ is diffuse abelian, then all $N_n$ are isomorphic. Indeed, applying \cite[Theorem 3.5(iii)]{Dy92a} to $A = \rL \F_\infty$, $B = P \ovt Q$ and using the fact that $2$ belongs to the fundamental group of $\rL \F_\infty$, it follows that $2$ belongs to the fundamental group of $N_1$. Applying \cite[Theorem 3.5(ii)]{Dy92a} to the same algebras $A,B$ and using the obvious isomorphism $Q \cong Q \ovt \rL(\Z/2\Z)$, it follows that $N_1 \cong \M_2(\C) \ot N_2$. Since $1/2$ belongs to the fundamental group of $N_1$, we conclude that $N_1 \cong N_2$. But then, $N_1 \cong N_n$ for all $n$. On the other hand, if $P$ is a non-amenable factor and $Q$ is a diffuse factor, then the II$_1$ factors $M_n$ are non-isomorphic. When $P$ and $Q$ are semi-exact, this follows from \cite[Corollary 3.5]{Oz04}. In the general case, the methods of \cite{IPP05} can be used, see \cite[Theorem 1.4]{Pe06} and \cite[Theorem 1.1]{CH08}.

4. As observed in \cite[Proposition 6.4]{Io06}, if $\rL(H_1)$ and $\rL(H_2)$ are stably isomorphic, then $\rL(H_1 \wr \Z) \cong \rL(H_2 \wr \Z)$. In particular, all group von Neumann algebras $\rL(\F_n \wr \Z)$, $n \geq 2$, are isomorphic. This is in sharp contrast with our Theorem \ref{thm.specialmain} saying that the group $(\Z/2\Z)^{(I)} \rtimes (\F_n \wr \Z)$ is superrigid, where $I = (\F_n \wr \Z) / \Z$.

5. In \cite[Corollary 1.2]{Bo09a} it is shown that the Bernoulli actions $\F_2 \actson (X_0,\mu_0)^{\F_2}$ are orbit equivalent for different choices of the base probability space $(X_0,\mu_0)$. It follows that all $\rL(H \wr \F_2)$, $H$ a non-trivial abelian group, are isomorphic. In \cite[Theorem 1.1]{Bo09b} it is shown that for different values of $n$, the Bernoulli actions $\F_n \actson (X_0,\mu_0)^{\F_n}$ are stably orbit equivalent. Hence, for all choices of $n,m$ and all non-trivial abelian groups $H_1$, $H_2$, the II$_1$ factors $\rL(H_1 \wr \F_n)$ and $\rL(H_2 \wr \F_m)$ are stably isomorphic. In particular $\rL((H_1 \wr \F_n) \times \Lambda_1) \cong \rL((H_2 \wr \F_m) \times \Lambda_2)$ when $\Lambda_1,\Lambda_2$ are icc amenable.
\end{remark}

\section{W$^*$-superrigidity for Bernoulli actions of product groups}\label{sec.Wstarsuperrigidity}

\begin{theorem}\label{thm.Wstarsuperrigid}
Let $\Gamma$ be an icc group which admits a chain of infinite subgroups $\Gamma_0<\Gamma_1< \cdots <\Gamma_n=\Gamma$ such that $\Gamma_{k-1}$ is almost normal in $\Gamma_k$, for every $k=1,\ldots,n$ and the centralizer of $\Gamma_0$ inside $\Gamma_1$ is non-amenable. Let $(X_0,\mu_0)$ be a non-trivial standard probability space.
Then the Bernoulli action $\Gamma \actson (X,\mu):= (X_0,\mu_0)^\Gamma$ is W$^*$-superrigid: if $\Lambda \actson (Y,\eta)$ is an arbitrary free ergodic p.m.p.\ action and $\pi : \rL^\infty(Y) \rtimes \Lambda \recht \rL^\infty(X) \rtimes \Gamma$ a $*$-isomorphism, then $\Lambda \cong \Gamma$ and the actions are conjugate.

More precisely, there exist an isomorphism of groups $\delta : \Lambda \recht \Gamma$, an isomorphism of probability spaces $\Psi : Y \recht X$, a character $\omega : \Gamma \recht \T$ and a unitary $w \in \rL^\infty(X) \rtimes \Gamma$ such that $\Psi(s \cdot y) = \delta(s) \cdot \Psi(y)$ for all $s \in \Lambda$ and a.e.\ $y \in Y$ and such that
$$\pi = (\Ad w) \circ \pi_\om \circ \pi_{\Psi,\delta}$$
where $\pi_{\Psi,\delta}(b v_s) = (b \circ \Psi^{-1}) u_{\delta(s)}$ for all $b \in \rL^\infty(Y)$, $s \in \Lambda$ and $\pi_\om(au_g) = \om(g) a u_g$ for all $a \in \rL^\infty(X)$, $g \in \Gamma$.
\end{theorem}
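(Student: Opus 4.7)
The plan is to follow the proof scheme of Theorem \ref{thm.main-gen}, applied to the comultiplication associated to the $\Lambda$-side group measure space decomposition. Identify $M := \rL^\infty(X) \rtimes \Gamma$ with $\rL^\infty(Y) \rtimes \Lambda$ through $\pi$, write $A := \rL^\infty(X)$ and $B := \rL^\infty(Y)$, and consider the unital $*$-homomorphism $\de : M \to M \ovt M$ given by $\de(bv_s) = bv_s \ot v_s$ for $b \in B$ and $s \in \Lambda$. The first step is to conjugate $\de(\rL \Gamma)$ into $\rL \Gamma \ovt \rL \Gamma$: because $\Gamma \acts \Gamma$ has trivial stabilizers, one may take $\kappa = 1$ in Corollary \ref{cor.malleable-rigid}, and the rigidity input comes from the non-amenability of $C_{\Gamma_1}(\Gamma_0)$ via the group measure space analogue of Proposition \ref{prop.about-comult}.4, which shows that $\de(\rL C_{\Gamma_1}(\Gamma_0)) \subset \de(\rL \Gamma_0)' \cap M \ovt M$ is strongly non-amenable relative to $M \ot 1$. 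Iterating along the chain $\Gamma_0 < \Gamma_1 < \cdots < \Gamma_n = \Gamma$ as in step 1 of the proof of Theorem \ref{thm.main-gen} produces the desired unitary $v \in M \ovt M$.

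Second, I would apply Theorem \ref{thm.intertwine-abelian} to $D := \de(A)$, normalized by the unitaries $\de(u_g) \in \rL(\Gamma \times \Gamma)$. The four hypotheses of that theorem are verified via the group measure space analogues of Proposition \ref{prop.about-comult}.1--4, as in step 2 of the proof of Theorem \ref{thm.main-gen}. This yields $C := \de(A)' \cap M \ovt M \prec A \ovt A$. Combined with the weak mixing of $(\Ad \de(u_g))_{g \in \Gamma}$ on $C$ (step 3 of that proof, using the analogue of Proposition \ref{prop.about-comult}.3 together with the weak mixing of the Bernoulli action $\Gamma \acts A$) and the absence of non-trivial finite-dimensional invariant subspaces, Corollary \ref{cor.conjugacy-actions}.1 applied inside $M \ovt M = (A \ovt A) \rtimes (\Gamma \times \Gamma)$ yields a unitary $\Omega \in M \ovt M$, group homomorphisms $\delta_1, \delta_2 : \Gamma \to \Gamma$ and a character $\omega : \Gamma \to \T$ with
\[ \Omega^* \de(u_g) \Omega = \omega(g)\, u_{\delta_1(g)} \ot u_{\delta_2(g)}, \qquad \Omega^* \de(A) \Omega \subset A \ovt A. \]
The flip, finite-index and cocycle arguments culminating in Theorem \ref{thm.triv-2-coc} (step 5 and the end of the proof of Theorem \ref{thm.main-gen}) then reduce to $\delta_1 = \delta_2 = \id_\Gamma$ and give $\Omega = \de(w^*)(w \ot w)$ for some unitary $w \in M$, so that
\[ \de(wu_g w^*) = \omega(g)\, (wu_g w^*) \ot (wu_g w^*) \quad\text{and}\quad \de(wAw^*) \subset (wAw^*) \ovt (wAw^*). \]

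The final step is to extract the conjugacy of actions. A short Fourier computation (the group measure space analogue of Lemma \ref{lem.elemcomult}) shows that the only non-zero $y \in M$ satisfying $\de(y) = y \ot y$ are the $v_s$, $s \in \Lambda$; applied to $y = \omega(g) wu_g w^*$ this produces an injective group homomorphism $\delta : \Gamma \to \Lambda$ with $wu_g w^* = \overline{\omega(g)}\, v_{\delta(g)}$. Applying the linear functionals $\id \ot \tau(\,\cdot\, v_s^*)$ to the relation $\de(waw^*) \in (wAw^*) \ovt (wAw^*)$ shows that every $\Lambda$-Fourier component $b_s v_s$ of an element $\tilde a = \sum_s b_s v_s \in \tilde A := wAw^*$ itself lies in $\tilde A$. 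A commutation argument in the abelian algebra $\tilde A$ then forces the support of each such $b_s$ to be $\sigma_s$-invariant, and combining this with the weak mixing of the Bernoulli action $\Gamma \acts \tilde A$ (conjugate via $\Ad w$ to $\Gamma \acts A$) and the freeness of $\Lambda \acts Y$ gives $\tilde A \subset B$. MASA maximality forces $\tilde A = B$; matching the respective normalizers then forces $\delta(\Gamma) = \Lambda$; the measure space isomorphism $\Psi : Y \to X$ conjugating the actions up to $\delta$ is induced by $(\Ad w)|_B$; and the decomposition $\pi = (\Ad w) \circ \pi_\omega \circ \pi_{\Psi, \delta}$ follows.

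The main obstacle I expect is a combination of two technicalities: the verification of the analogues of Proposition \ref{prop.about-comult} for the group measure space comultiplication (which underlie the first three steps and do not follow from a direct translation, since the standard Fourier computation in $\rL \Lambda$ must be upgraded to cope with $B$-valued coefficients in $M = B \rtimes \Lambda$), and the final extraction of $\tilde A = B$ from the algebraic relation $\de(\tilde A) \subset \tilde A \ovt \tilde A$. The latter requires ruling out the existence of non-trivial $\Lambda$-Fourier components of elements of $\tilde A$ outside of $e$, via a delicate interplay between the Bernoulli dynamics on $\tilde A$, the MASA property, and the freeness of $\Lambda \acts Y$.
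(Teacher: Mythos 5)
There is a genuine gap, and it sits exactly where your proposal departs from what the paper can actually deliver: the untwisting of $\Omega$. For the group measure space comultiplication $\de(bv_s)=bv_s\ot v_s$ the flip symmetry $\si\circ\de=\de$ fails (already $\si(\de(bv_s))=v_s\ot bv_s\neq\de(bv_s)$), so the two steps you import from the end of the proof of Theorem \ref{thm.main-gen} break down: you cannot conjugate the relation by the flip to force $\delta_1=\delta_2$, and you cannot verify the symmetry hypothesis $\Omega_{21}=\mu\Omega$ of Theorem \ref{thm.triv-2-coc}; moreover that theorem is a statement about the comultiplication of a group von Neumann algebra $\rL\Lambda$ and there is no reason for $\Omega$ to lie in $\rL\Lambda\ovt\rL\Lambda$. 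Hence the key identity $\Omega=\de(w^*)(w\ot w)$, on which your entire final extraction ($wu_gw^*=\overline{\om(g)}v_{\delta(g)}$, $\de(\widetilde A)\subset\widetilde A\ovt\widetilde A$, $\widetilde A=B$) rests, is unsupported. A second, related gap is your appeal to Corollary \ref{cor.conjugacy-actions}.1: its extra hypotheses were verified in the group case through Proposition \ref{prop.about-comult}.2--3, and there is no analogue of part 3 (control of the quasi-normalizer of $\de(M)$) for the group measure space comultiplication; consequently one cannot show that $\C1$ is the only finite-dimensional $(\Ad\de(u_g))_{g\in\Gamma}$-invariant subspace of $C$. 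Indeed $\cZ(C)\cap\rL(\Gamma\times\Gamma)$ may a priori be a nontrivial atomic algebra, and the paper only obtains weak mixing on a corner $\cZ(C)p$ for a finite-index subgroup $G<\Gamma$, which forces it to use the weaker conclusion of Corollary \ref{cor.conjugacy-actions}.2 (a finite group $K$, a dimension $d$, the projection $p_K$) rather than the clean form you assume.

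This is why the paper's proof has a completely different global shape. It first reduces the theorem to showing $B\embed A$ or $\rL\Lambda\embed\rL\Gamma$, handing the actual conjugacy and the character $\om$ to \cite[Case (5) in the proof of Theorem 9.1]{Io10}, \cite[Theorem A.1]{Po01b} and Popa's cocycle superrigidity theorem \cite[Theorem 1.1]{Po06a} -- an input your proposal never invokes, although the precise form of the conclusion ($\pi=\Ad w\circ\pi_\om\circ\pi_{\Psi,\delta}$) is exactly what cocycle superrigidity produces. It then argues by contradiction: assuming $B\not\embed A$ and $\rL\Lambda\not\embed\rL\Gamma$, it runs your steps 1 and 2 (with \cite[Theorem 6.1]{Io10} in place of Theorem \ref{thm.intertwine-abelian}, and with the weak mixing only on $\cZ(C)p$ as above), applies Theorem \ref{thm.conjugacy-actions} with Corollary \ref{cor.conjugacy-actions}.2, and finally derives a contradiction by a Fourier-coefficient argument: the twisted conjugacy of step 3 forces the Fourier coefficients of $\de(x_n)p$ to vanish asymptotically for sequences $x_n$ with vanishing Fourier coefficients, and feeding in a sequence of unitaries $b_n\in B$ witnessing $B\not\embed A$, together with $\de(b)=v(b\ot1)v^*$, yields $p=0$. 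So your proposal is not a correct alternative route as written; to salvage it you would need either a vanishing-of-$2$-cohomology statement adapted to the non-symmetric comultiplication (which the paper does not have and which is doubtful) or to fall back on the contradiction-plus-cocycle-superrigidity scheme that the paper actually uses.
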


Denote $A = \rL^\infty(X)$ and $M=A\rtimes \Gamma$. Put $B = \rL^\infty(Y)$ and identify $M = B \rtimes \Lambda$ through $\pi$.
Let $\Delta:M\rightarrow M\overline{\otimes}M$ be the unital $*$-homomorphism defined as $\Delta(bv_s)=bv_s\otimes v_s$, for all $b\in B$ and $s\in\Lambda$.
Before continuing, let us record a few useful properties of $\Delta$.

\begin{lemma}\label{lemma.about-coaction}
Let $P\subset M$ be a von Neumann subalgebra.
\begin{itemlist}
\item If $P\not\embed B$, then $\Delta(P)\nprec M\otimes 1$.

\item If $P$ is diffuse, then $\Delta(P)\nprec 1\otimes M$.

\item If $\Delta(M)\prec M\overline{\otimes}P$, then $\rL \Lambda \prec M$.

\item If $\Delta(M)\prec P\overline{\otimes}M$, there exists a non-zero projection $p\in P'\cap M$ such that $Pp\subset pMp$ has finite index.

\item If $P$ has no amenable direct summand, then $\Delta(P)$ is strongly non-amenable relative to $M\otimes 1$ and $1\otimes M$. In particular, if $N\subset M$ is an amenable von Neumann subalgebra, then $\Delta(P)\nprec M\overline{\otimes}N$ and $\Delta(P)\nprec N\overline{\otimes}M$.
\end{itemlist}
\end{lemma}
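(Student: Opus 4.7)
The plan is to run each bullet through the same deformation/intertwining toolbox as Proposition \ref{prop.about-comult}, with the twist that $\Delta(bv_s) = bv_s \ot v_s$ is asymmetric: the first leg carries the full group measure space algebra $M$ while the second leg sees only $\rL \Lambda$. The computational identity I use throughout is that, writing $u = \sum_s (u)_s v_s \in M$ with $(u)_s := E_B(uv_s^*) \in B$, one has $\Delta(u) = \sum_s (u)_s v_s \ot v_s$.

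For the first bullet, extract $u_n \in \cU(P)$ with $\|E_B(xu_ny)\|_2 \recht 0$ for all $x,y \in M$, possible since $P \not\embed B$. On elementary vectors $X = a \ot v_g$, $Y = c \ot v_h$ in $M \ovt M$, a direct Fourier expansion yields $E_{M \ot 1}(X \Delta(u_n) Y^*) = a \,(u_n)_{g^{-1}h}\, v_{g^{-1}h}\, c^*$, whose $\|\cdot\|_2$ is bounded by $\|a\|\|c\|\|E_B(u_n v_{g^{-1}h}^*)\|_2 \recht 0$; linearity and density then close the bullet via Definition \ref{def.intertwine}. Bullet 2 is similar in spirit: pick $u_n \in \cU(P)$ tending weakly to $0$ in $M$ (possible by diffuseness), expand $E_{1 \ot M}(X \Delta(u_n) Y^*)$ on elementary tensors to a sum $\sum_s \lambda_{n,s} \, b_1 v_{h_1 s h_2^{-1}} b_2^*$ with orthogonal $s$-terms; rewriting each scalar $\lambda_{n,s}$ as $\tau(u_n y_s)$ for a fixed $y_s \in M$ with $\sum_s \|y_s\|_2^2 < \infty$, dominated convergence forces $\sum_s |\lambda_{n,s}|^2 \recht 0$.

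For bullets 3 and 4 I reduce to the group comultiplication case. The restriction of $\Delta$ to $\rL \Lambda \subset M$ coincides with the plain group comultiplication $\de : \rL \Lambda \recht \rL \Lambda \ovt \rL \Lambda$. Under the hypothesis of bullet 3 this gives $\de(\rL \Lambda) \embed M \ovt P$, to which Proposition \ref{prop.about-comult}.2 applies and yields the stated finite-index conclusion for $\rL \Lambda$. Bullet 4 is argued directly as in Proposition \ref{prop.about-comult}.2: Definition \ref{def.intertwine} together with the $M \ot 1$-bimodularity of $E_{P \ovt M}$ reduces the intertwining inequality to a uniform lower bound $\sum_{i,j} \|E_P(v_{h_i} u v_{h_j}^*)\|_2^2 \geq \delta$ for every $u \in \cU(M)$, which is exactly $M \embed_M P$ and so produces a finite-index corner of $P$ inside $M$.

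The fifth bullet follows the pattern of Proposition \ref{prop.about-comult}.4. For the $M \ot 1$ side, the bimodule $\bim{(\Delta(M) \ot 1)}{\rL^2(M \ovt M \ovt M)}{(M \ovt 1 \ovt M)}$ is isomorphic to the coarse $M$-$(M \ovt M)$-bimodule, so any weak containment of $\bim{\Delta(P)}{\rL^2(p(M \ovt M))}{M \ovt M}$ inside it pulls back through $\Delta$ to a weak containment of the trivial $Pz$-$Pz$-bimodule in the coarse one for some non-zero central projection $z \in \cZ(P)$, forcing $Pz$ to be amenable and contradicting the hypothesis on $P$. The $1 \ot M$ side is handled by the mirror identification showing that $\rL^2(\langle M \ovt M, e_{1 \ot M}\rangle)$ splits as the coarse $M$-$M$-bimodule tensored with the standard one, so the analogous pull-back through $\Delta$ applies. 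The main technical obstacle I expect is bullet 2, where the asymmetry of $\Delta$ prevents the coefficient-wise vanishing argument available in the plain-comultiplication case, and one must carefully set up the dominated-convergence argument over the full $s$-sum.
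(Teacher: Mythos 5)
Your bullets 1 and 2 are correct and are exactly the kind of adaptation of Proposition \ref{prop.about-comult} that the paper has in mind (its own proof of items 1--4 is simply a reference to \cite[Lemma 9.2]{Io10} or to such an adaptation); the only slip is that in bullet 1 the test vectors $a \ot v_g$ only span $\rL^2(M \ovt \rL\Lambda)$, so you must take $a \ot b v_g$ with $b \in B$, which changes nothing in the computation. In bullet 3 the reduction to the group comultiplication on $\rL\Lambda$ is legitimate (intertwining passes to subalgebras with the same unit), but Proposition \ref{prop.about-comult}.2 does not apply as stated -- there $P \subset \rL\Lambda$ and the ambient algebra is $\rL\Lambda \ovt \rL\Lambda$, here $P \subset M$ is arbitrary -- and you quote the wrong conclusion: the finite-index statement belongs to bullet 4, while bullet 3 asserts $\rL\Lambda \embed P$ (this is how it is used in Step 2 of the proof of Theorem \ref{thm.Wstarsuperrigid}, with $P = \rL\Gamma$). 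The clean argument is the contrapositive: if $\rL\Lambda \not\embed_M P$, pick $s_n \in \Lambda$ with $\|E_P(x v_{s_n} y^*)\|_2 \recht 0$ for all $x,y \in M$; since $\Delta(v_{s_n}) = v_{s_n} \ot v_{s_n}$ is an elementary tensor, $\|E_{M \ovt P}(X \Delta(v_{s_n}) Y^*)\|_2 \recht 0$ for all $X,Y$, so $\Delta(M) \not\embed M \ovt P$.

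The genuine gaps are in bullets 4 and 5. In bullet 4, $E_{P \ovt M}$ is not $(M \ot 1)$-bimodular (only $(P\ot1)$- and $(1 \ot M)$-bimodular), and with the correct bimodularity the reduction leaves you with $\|E_{P\ovt M}((x\ot1)\Delta(u)(y\ot1)^*)\|_2^2 = \sum_{s}\|E_P(x\,E_B(uv_s^*)v_s\,y^*)\|_2^2$, a sum over the Fourier modes of $u$ which is not controlled by $\|E_P(xuy^*)\|_2$; so the asserted bound $\sum_{i,j}\|E_P(v_{h_i}uv_{h_j}^*)\|_2^2 \geq \delta$ for all $u \in \cU(M)$ does not follow -- the asymmetry of $\Delta$ bites here, not in bullet 2. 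The repair is to invoke the last criterion of Definition \ref{def.intertwine} with the generating group of unitaries $\{bv_s \mid b \in \cU(B), s \in \Lambda\}$, on which $\Delta(bv_s) = bv_s \ot v_s$ is again an elementary tensor: if $M \not\embed_M P$ one finds $b_n v_{s_n}$ with vanishing $E_P$-coefficients, and then $\Delta(b_nv_{s_n})$ witnesses $\Delta(M) \not\embed P \ovt M$; hence $\Delta(M)\embed P\ovt M$ forces $M \embed_M P$ and the finite-index corner. In bullet 5, the claim that $\bim{(\Delta(M)\ot1)}{\rL^2(M\ovt M\ovt M)}{(M\ovt1\ovt M)}$ is \emph{isomorphic} to the coarse $M$-$(M\ovt M)$-bimodule is false here: the vector $1\ot1\ot1$ satisfies $b\cdot\xi = \xi\cdot(b\ot1)$ for every $b \in B$, and no nonzero vector in a multiple of the coarse bimodule can, because $B$ is diffuse. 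The correct statement -- and the one genuinely new ingredient that the paper's proof supplies -- is that this bimodule is \emph{weakly contained} in the coarse one because $B$ is amenable; you never invoke the amenability of $B$, so the $M\ot1$ half of your bullet 5 is incomplete as written. (Your mirror claim for the $1\ot M$ side is essentially fine: there the extra copy is glued along the group-like second leg of $\Delta$, and an untwisting unitary gives an honest isomorphism with the coarse bimodule, so the argument of Proposition \ref{prop.about-comult}.4 applies verbatim.) With these corrections your computations do reconstruct the paper's proof, but bullets 4 and the $M\ot1$ half of 5 do not go through as stated.
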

\begin{proof}
To prove 1-4, see \cite[Lemma 9.2]{Io10} or adapt  the proof of Proposition \ref{prop.about-comult}. Since $B$ is amenable, the $M$-$(M\overline{\otimes}M)$-bimodule $\bim{(\Delta(M)\otimes 1)}{\rL^2(M\overline{\otimes}M\overline{\otimes}M)}{(M\overline{\otimes}1\overline{\otimes}M)}$ is weakly contained in the coarse $M$-$(M\overline{\otimes}M)$-bimodule $\rL^2(M)\overline{\otimes}\rL^2(M\overline{\otimes}M)$. Continuing exactly as in the proof of Proposition \ref{prop.about-comult}.4 yields 5.
\end{proof}

To prove Theorem \ref{thm.Wstarsuperrigid} it is sufficient to show
that  $B\prec A$ or $\rL\Lambda\prec \rL\Gamma$. First, if $\rL
\Lambda \prec \rL \Gamma$, then \cite[Case (5) in the proof of
Theorem 9.1]{Io10} shows that automatically $B\prec A$. If $B\prec
A$ then, by \cite[Theorem A.1]{Po01b}, $B$ and $A$ are unitarily
conjugate, so that after such a unitary conjugacy, $\pi$ is
implemented by an orbit equivalence between $\Lambda \actson Y$ and
$\Gamma \actson X$, together with an $\T$-valued cocycle for the
action $\Gamma \actson X$. By the cocycle superrigidity theorem
\cite[Theorem 1.1]{Po06a}, we can assume that the orbit equivalence
is a conjugacy and that the $\T$-valued cocycle is a character.

We prove Theorem \ref{thm.Wstarsuperrigid} by contradiction assuming that $B\nprec A$ and $\rL\Lambda\nprec \rL\Gamma$. The proof consists of several steps.

\begin{step}\label{step1adrian}
There exists a unitary $v\in M\overline{\otimes}M$ such that $v\Delta(\rL\Gamma)v^*\subset \rL(\Gamma\times\Gamma)$.
\end{step}
\begin{proof}
Let $Q=\Delta(\rL\Gamma_0)$ and denote by $P$ the quasi-normalizer of $Q$ inside $M\overline{\otimes}M$. Since $\Delta(\rL\Gamma_1)\subset P$ and the centralizer of $\Gamma_0$ inside $\Gamma_1$ is non-amenable, Lemma \ref{lemma.about-coaction}.5 implies that $Q'\cap P$ is strongly non-amenable relative to $M\otimes 1$ and $1\otimes M$. Since $\Gamma_1$ is non-amenable and $\Delta(\rL\Gamma_1)\subset P$, Lemma \ref{lemma.about-coaction}.5 gives that $P\nprec M\overline{\otimes}A$ and $P\nprec A\overline{\otimes}M$.

We claim that $Q\nprec M\otimes 1$ and $Q\nprec 1\otimes M$. Indeed, by Lemma \ref{lemma.about-coaction} it suffices to prove that $\rL\Gamma_0\nprec B$. If  we assume that $\rL\Gamma_0\prec B$, then [Va07, Lemma 3.5.] implies that $B\prec (\rL\Gamma_0)'\cap M$. Since $\Gamma_0$ is infinite,  we get that $(\rL\Gamma_0)'\cap M\subset \rL\Gamma$ and therefore $B\prec \rL\Gamma$, contradicting the fact that $B$ is regular in $M$ (see \cite[Theorem 3.1]{Po03}).

Applying Corollary \ref{cor.malleable-rigid}.3 we get a unitary $v\in M\overline{\otimes}M$ such that $vPv^*\subset M\overline{\otimes}\rL\Gamma$. Repeating the last part of step \ref{step1} in the proof of Theorem \ref{thm.main-gen} yields the conclusion.
\end{proof}

From now we replace $\Delta$ by $(\Ad v) \circ \Delta$ and assume that $\Delta(\rL\Gamma)\subset \rL(\Gamma\times\Gamma)$. Let $C=\Delta(A)'\cap (M\overline{\otimes}M)$.

\begin{step}\label{step2adrian}
For every projection $p\in\Cal Z(C)$ we have $Cp\prec A\overline{\otimes}A$. Moreover there exists a unitary $u\in M\overline{\otimes}M$ such that $u\cZ(C)u^*\subset A\overline{\otimes}A$.
\end{step}
\begin{proof}
Since $\Gamma$ is non-amenable, by Lemma \ref{lemma.about-coaction} we have that $\Delta(\rL\Gamma)\nprec \rL\Gamma\otimes 1$ and $\Delta(\rL\Gamma)\nprec 1\otimes \rL\Gamma$. We claim that $\Delta(A)\nprec \rL\Gamma\overline{\otimes}M$ and $\Delta(A)\nprec M\overline{\otimes}\rL\Gamma$. If we assume the contrary, since $\Delta(M)$ is contained in the quasi-normalizer of $\Delta(A)$ inside $M\overline{\otimes}M$, \cite[Proposition 3.5]{Io10} implies that one of the following holds: $\Delta(A)\prec 1\otimes M$, $\Delta(M)\prec \rL\Gamma\overline{\otimes} M$, $\Delta(A)\prec M\otimes 1$ or $\Delta(M)\prec M\overline{\otimes}\rL\Gamma$.
Applying Lemma \ref{lemma.about-coaction} we get that either $A$ is not diffuse, $\rL(\Gamma)$ has finite index in $M$, $A\prec B$ or $\rL\Lambda\prec \rL\Gamma$, all of which give a contradiction.

Since $\{\Delta(u_g)\}_{g\in \Gamma}$ normalize $\Delta(A)$, the previous paragraph allows us to apply \cite[Theorem 6.1]{Io10} and the conclusion follows.
\end{proof}

Note that the unitaries $\{\Delta(u_g)\}_{g\in\Gamma}$ normalize $C$ and denote by $(\beta_g)_{g\in\Gamma}$ the action of $\Gamma$ on $C$ given by  $\beta_g(x)=\Delta(u_g)x\Delta(u_g)^*$, for $g\in\Gamma$ and $x\in C$.
Step \ref{step2adrian} implies that the algebra $\cZ_0:=\cZ(C)\cap \rL(\Gamma\times\Gamma)$ is completely atomic. Let $p\in\cZ_0$ be a minimal projection and let $G\subset\Gamma$ be a finite index subgroup such that $p$ is $(\beta_g)_{g\in G}$-invariant.

We claim that the action $(\beta_g)_{g\in G}$ on $\cZ(C)p$ is weakly mixing. To prove this claim, let $\cH\subset\cZ(C)p$ be a finite dimensional $(\beta_g)_{g\in G}$-invariant subspace. Then $\cH$ is contained in the quasi-normalizer of $\Delta(\rL G)p$ inside $p(M\overline{\otimes}M)p$. Since $\Delta(\rL G)\nprec \rL\Gamma\otimes 1$ and $\Delta(\rL G)\nprec 1\otimes \rL\Gamma$,  we get from \cite[Lemma 4.2]{Va07} that $\cH\subset p \rL(\Gamma\times\Gamma)p$. Thus $\cH\subset \cZ_0p=\C p$, proving the claim.

For $d\geq 1$, we denote by $\cG_d$ the group $\{u\otimes u_g|u\in\cU(\M_d(\C)), g\in\Gamma\times\Gamma\}$.

\begin{step}\label{step3adrian}
There exist $d\geq 1$, two groups $K\subset\cG\subset\cG_d$ with $K$ finite and normal in $\cG$, a group homomorphism $\delta:G\rightarrow\cG/K$, a partial isometry $w\in \M_{1,d}(\C)\otimes \rL(\Gamma\times\Gamma)$ with $ww^*=p$ and $w^*w=p_K:=|K|^{-1}\sum_{k\in K} k$,  such that
$w^*Cw=(\M_d(\C)\otimes (A\overline{\otimes}A))^{\Ad K} p_K$ and $w^*\Delta(u_g)w=\delta(g)p_K$ for all $g\in G$.
\end{step}
\begin{proof}
We apply Theorem \ref{thm.conjugacy-actions} and Corollary \ref{cor.conjugacy-actions}.2 to the crossed product $M\overline{\otimes}M=(A\overline{\otimes}A)\rtimes (\Gamma\times\Gamma)$. Since the action $(\Ad(\Delta(u_g)p))_{g\in G}$ on $\cZ(Cp)$ is weakly mixing, $Cp\prec A\overline{\otimes}A$ by step \ref{step2adrian}, and $(Cp)'\cap p(M\overline{\otimes}M)p=\cZ(Cp)$, all conditions of Theorem \ref{thm.conjugacy-actions} are indeed satisfied.

Moreover, also the extra condition 2 in Corollary \ref{cor.conjugacy-actions} holds. Indeed, if a subgroup $H$ of $\Gamma\times\Gamma$ acts non-ergodically on $A\overline{\otimes}A$, then $H\subset H_0\times\Gamma$ or $H\subset \Gamma\times H_0$ for some finite subgroup $H_0$ of $\Gamma$.
Since $G$ is non-amenable, from Lemma \ref{lemma.about-coaction} we know that $\Delta(\rL G)\nprec (A\overline{\otimes}A)\rtimes H$, for every such subgroup $H$ of $\Gamma \times \Gamma$. Thus, we also have that $N\nprec (A\overline{\otimes}A)\rtimes H$, where $N$ denotes the von Neumann algebra generated by $Cp$ and $\Delta(\rL G)p$.

Theorem \ref{thm.conjugacy-actions} and Corollary \ref{cor.conjugacy-actions}.2 provide the conclusion of step \ref{step3adrian}.
\end{proof}

\begin{step}
End of the proof of Theorem \ref{thm.Wstarsuperrigid}.
\end{step}
\begin{proof}
Denote by $\gamma : \cU(\M_d(\C)) \times \Gamma \times \Gamma \recht \Gamma$ the group morphism $\gamma(u,g,h) = h$. Put $\cG_0 := \gamma(\cG)$ and $K_0 := \gamma(K)$. By construction, $K_0$ is a finite normal subgroup of $\cG_0$ and we still denote by $\gamma$ the natural group homomorphism $\gamma : \cG/K \recht \cG_0 / K_0$. Denote by $G_1 < G$ the kernel of the homomorphism $\gamma \circ \delta$. By construction, $w^* \de(\rL G_1) w \embed M \ot 1$ and hence, $\de(\rL G_1) \embed M \ot 1$. By Lemma \ref{lemma.about-coaction} we have $\rL G_1 \embed B$ and the proof of step \ref{step1adrian} implies that $G_1$ cannot be infinite.

We consider the Fourier decomposition of elements in $\M_d(\C) \ovt M \ovt M$ with respect to the crossed product
$\M_d(\C) \ovt M \ovt M = (\M_d(\C) \ovt M \ovt A) \rtimes \Gamma$, where $\Gamma$ only acts on $A$. Note that the Fourier coefficients of a bounded sequence $x_n \in \M_d(\C) \ovt M \ovt M$ tend to zero pointwise in $\| \, \cdot \, \|_2$ if and only if
$$\|E_{\M_d(\C) \ovt M \ovt A}(a x_n b)\|_2 \recht 0 \quad\text{for all}\quad a,b \in \M_d(\C) \ovt M \ovt M \; .$$
It follows that for all $a,b \in \M_d(\C) \ovt M \ovt M$, also the Fourier coefficients of $a x_n b$ tend to zero pointwise. We also consider the Fourier decomposition of elements in $M$ with respect to the crossed product $M = A \rtimes \Gamma$. In both situations, we denote the Fourier coefficients of an element $x$ as $(x)_g$, $g \in \Gamma$. When $x \in \M_d(\C) \ovt M \ovt M$, then $(x)_g \in \M_d(\C) \ovt M \ovt A$.

Define the normal $*$-homomorphism $\theta : A \recht (\M_d(\C) \ovt A \ovt A)^{\Ad K}$ such that $w^* \de(a) w = \theta(a) p_K$ for all $a \in A$. By step \ref{step3adrian} we get that for all $x \in A \rtimes G$ and all $h \in \Gamma$, we have
$$\sum_{k \in K_0} (w^* \de(x) w)_{hk} (1 \ot 1 \ot u_{hk}) = \sum_{g \in G, \gamma(\delta(g)) = h K_0} \theta((x)_g) \delta(g) p_K \; .$$
Recall that for a fixed $h \in \Gamma$, there are only finitely many $g \in G$ satisfying $\gamma(\delta(g)) = h K_0$. So, if $x_n$ is a bounded sequence in $A \rtimes G$ whose Fourier coefficients tend to zero pointwise, the same is true for $w^* \de(x_n) w$. By the remarks in the previous paragraph, the Fourier coefficients of $\de(x_n) p$ then also tend to zero pointwise.

Next, let $x_n$ be a bounded sequence in $A \rtimes \Gamma$ whose Fourier coefficients tend to zero pointwise. Let $g_1,\ldots,g_s \in \Gamma$ be representatives for $\Gamma/G$. Define for $j =1,\ldots,s$,
$$x^j_n := E_{A \rtimes G}(u_{g_j}^* x_n) \; .$$
Then for every $j$, we have that $(x^j_n)_n$ is a bounded sequence in $A \rtimes G$ whose Fourier coefficients tend to zero pointwise. The previous paragraph, together with the formula
$$\de(x_n) p = \sum_{j=1}^s \de(u_{g_j}) \de(x^j_n) p \; ,$$
imply that the Fourier coefficients of $\de(x_n) p$ tend to zero pointwise.

Since $B \not\embed A$, Definition \ref{def.intertwine} provides a sequence of unitaries $b_n$ whose Fourier coefficients tend to zero pointwise. By the previous paragraph the same is true for $\de(b_n) p$. But for $b \in B$, we have $\de(b) = v(b \ot 1)v^*$ (recall that the unitary $v \in M \ovt M$ was given by step \ref{step1adrian} and that we conjugated the initial comultiplication $\de$ by $v$).
So, $\de(b_n)p = v(b_n \ot 1)vp$ and it follows that the Fourier coefficients of $(b_n \ot 1)vp$ tend to zero pointwise. Taking the $g$-th Fourier coefficient we get that
$$\|(vp)_g\|_2 = \|b_n (vp)_g\|_2 = \|((b_n \ot 1)v p)_g\|_2 \recht 0$$
for all $g \in \Gamma$. We reached the contradiction that $p$ must be $0$.
\end{proof}
This ends the proof of Theorem \ref{thm.Wstarsuperrigid}.\hfill\qedsymbol

\end{document}